\newif\ifsubsections
	\definecolor{linkred}{rgb}{0.7,0.2,0.2}
	\definecolor{linkblue}{rgb}{0,0.2,0.6}
	\definecolor{linkred}{rgb}{0.0,0.0,0.0}
	\definecolor{linkblue}{rgb}{0,0.0,0.0}
\theoremstyle{plain}
\newtheorem{theorem}[equation]{Theorem}
\newtheorem{proposition}[equation]{Proposition}
\newtheorem{lemma}[equation]{Lemma}
\newtheorem{exercise}[equation]{Exercise}
\theoremstyle{definition}
\newtheorem{definition}[equation]{Definition}
\newtheorem{example}[equation]{Example}
\newtheorem{remark}[equation]{Remark}
\newtheorem{assumption}[equation]{Assumption}
\newcommand{\uno}[1]{\mathbf{1}_{#1}}
\renewcommand{\SM}{\mathcal{S}}
\renewcommand{\SN}{\bar{\mathcal{S}}}
\begin{document}

%
%
%
%
%
%

\title{From the totally asymmetric simple exclusion process to the KPZ fixed point}

%
%
\author{Jeremy Quastel \and Konstantin Matetski}
\address{University of Toronto, 40 St. George Street, Toronto, Ontario, Canada M5S 2E4}
\email{quastel@math.toronto.edu, matetski@math.toronto.edu}
%
%
\subjclass[2010]{Primary 60K35; Secondary 82C27}
\keywords{TASEP, growth process, biorthogonal ensemble, determinantal point process, KPZ fixed point}

\begin{abstract}
These notes are based on the article [Matetski, Quastel, Remenik, \emph {The KPZ fixed point}, 2016] and give a self-contained exposition of construction of the KPZ fixed point which is a Markov process at the centre of the KPZ universality class. Starting from the Sch\"{u}tz's formula for transition probabilities of the totally asymmetric simple exclusion process, the method is by writing them as the biorthogonal ensemble/non-intersecting path representation found by Borodin, Ferrari, Pr\"{a}hofer and Sasamoto. We derive an explicit formula for the correlation kernel which involves transition probabilities of a random walk forced to hit a curve defined by the initial data. This in particular yields a Fredholm determinant formula for the multipoint distribution of the height function of the totally asymmetric simple exclusion process with arbitrary initial condition. In the 1:2:3 scaling limit the formula leads in a transparent way to a Fredholm determinant formula for the KPZ fixed point, in terms of an analogous kernel based on Brownian motion. The formula readily reproduces known special self-similar solutions such as the Airy$_1$ and Airy$_2$ processes.
\end{abstract}  

%
%
\maketitle
\thispagestyle{empty}

%
%


\section{The totally asymmetric simple exclusion process}

The \emph{totally asymmetric simple exclusion process} (TASEP) is a basic interacting particle system studied in non-equilibrium statistical mechanics. The system consists of particles performing totally asymmetric nearest neighbour random walks on the one-dimensional integer lattice with the exclusion rule. Each particle independently attempts to jump to the neighbouring site to the right at rate $1$, the jump being allowed only if that site is unoccupied. More precisely, if we denote by $\eta \in \{0,1\}^\Z$ a particle configuration (where $\eta_x = 1$ if there is a particle at the site $x$, and $\eta_x = 0$ if the site is empty), then TASEP is a Markov process with  infinitesimal generator  acting on cylinder functions $f : \{0,1\}^\Z \to \R$ by
\[
 	(L f)(\eta) = \sum_{x \in \Z} \eta_x (1 - \eta_{x+1}) \big(f(\eta^{x, x+1}) - f(\eta)\big),
\]
where $\eta^{x, x+1}$ denotes the configuration $\eta$ with interchanged values at $x$ and $x+1$: 
\[
 \eta^{x, x+1}_y = 
\begin{cases}
 	\eta_{x+1}, &~\text{if}~ y  =x,\\
	\eta_{x}, &~\text{if}~ y  =x + 1,\\
	\eta_{y}, &~\text{if}~ y  \notin \{x, x+1\}.\\
\end{cases}
\]
See~\cite{ligg1} for the proof of the non-trivial fact that this process is well-defined.

\begin{exercise}  Prove that the following measures $\mu$ are invariant for TASEP, i.e. $\int (Lf) d\mu=0$: 
\begin{enumerate}[topsep=0pt]
 	\item the Bernoulli product measures with any density $\rho\in [0,1]$,
	\item the Dirac measure on any configuration with $\eta_x =1$ for $x\ge x_0$, $\eta_x=0$ for $x<x_0$.
\end{enumerate}
It is known~\cite{ligg1} that these are the only invariant measures.
\end{exercise}

\noindent The TASEP dynamics preserves the order of particles. Let us denote positions of particles at time $t \geq 0$ by
\[
 	\cdots <X_t(2)<X_t(1)< X_t(0)< X_t(-1)<X_t(-2)< \cdots,
\]
where $X_t(i) \in \Z$ is the position of the $i$-{th} particle. Adding $\pm\infty$ into the state space and placing a necessarily infinite number of particles at infinity allows for left- or right-finite data with no change of notation (the particles at $\pm\infty$ are playing no role in the dynamics). We follow the standard
practice of ordering particles from the right; for right-finite data the rightmost particle is labelled $1$. 

TASEP is a particular case of the \emph{asymmetric simple exclusion process} (ASEP) introduced by Spitzer in~\cite{Spitzer}. Particles in this model jump to the right with rate $p$ and to the left with rate $q$ such that $p + q = 1$, following the exclusion rule. Obviously, in the case $p = 1$ we get TASEP. In the case $p \in (0,1)$ the model becomes significantly more complicated comparing to TASEP, for example Sch\"{u}tz's formula described in Section~\ref{sec:distr} below cannot be written as a determinant which prevents the following analysis in the general case.
ASEP is important because of the weakly asymmetric limit, which means to diffusively rescale the growth process introduced below as
$\ep^{1/2} h_{\ep^{-2} t}(\ep^{-1} z)$ while at the same time taking $q-p=\mathcal{O}(\ep^{1/2})$, in order to obtain the KPZ equation~\cite{berGiaco}.

\subsection{The growth process.}
\label{sec:growth}

Of special interest in non-equilibrium physics is the \emph{growth process} associated to TASEP. More precisely, let 
\[
X^{-1}_t(u) = \min \bigl\{k \in \Z : X_t(k) \leq u\bigr\}
\]
denote the label of the rightmost particle which sits to the left of, or at, $u$ at time $t$. 
The \emph{TASEP  height function} associated to $X_t$ is given for $z\in\Z$ by 
\begin{equation}\label{defofh}
 h_t(z) = -2 \left(X_t^{-1}(z-1) - X_0^{-1}(-1) \right) - z,
\end{equation}
which fixes $h_0(0)=0$. The height function is a random walk path $h_t(z+1) = h_t(z) +\hat{\eta}_t(z)$ with $\hat{\eta}_t(z)=1$ if there is a particle at $z$ at time $t$ and $-1$ if there is no particle at $z$ at time $t$.  We can also easily extend the height function to a continuous function of $x\in \R$ by linearly interpolating between the integer points. 

\begin{exercise} Show that the dynamics of $h_t$ is that local max's become local min's at rate $1$; i.e. if \break$h_t(z) = h_t(z\pm 1) +1$ then $h_t(z)\mapsto h_t(z)-2$ at rate $1$, the rest of the height function remaining unchanged (see the figure below).   What happens if we consider ASEP?
\end{exercise}

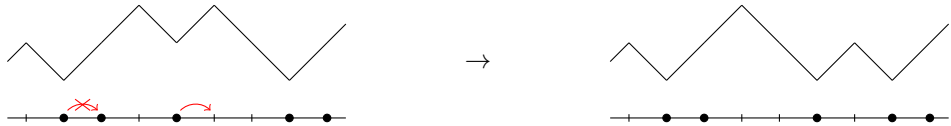
\begin{figure}
\centering
\begin{minipage}{.4\textwidth}
\centering
\begin{tikzpicture}
	\draw (0.25, 0) -- (4.75, 0);
	\foreach \x in {1,...,9} {
		\draw (0.5 * \x, -0.05) -- (0.5 * \x, 0.05);
	}
	\draw[fill=black] (1,0) circle (0.05);
	\draw[fill=black] (1.5,0) circle (0.05);
	\draw[fill=black] (2.5,0) circle (0.05);
	\draw[fill=black] (4,0) circle (0.05);
	\draw[fill=black] (4.5,0) circle (0.05);
	
	\draw (0.25, 0.75) -- (0.5, 1);
	\draw (0.5, 1) -- (1, 0.5);
	\draw (1, 0.5) -- (1.5, 1);
	\draw (1.5, 1) -- (2, 1.5);
	\draw (2, 1.5) -- (2.5, 1);
	\draw (2.5, 1) -- (3, 1.5);
	\draw (3, 1.5) -- (3.5, 1);
	\draw (3.5, 1) -- (4, 0.5);
	\draw (4, 0.5) -- (4.5, 1);
	\draw (4.5, 1) -- (4.75, 1.25);
	
	\draw[->, draw=red] (1.05, 0.1) to[bend left=45] (1.45, 0.1);
	\draw[draw=red] (1.15, 0.25) to (1.35, 0.1);
	\draw[draw=red] (1.15, 0.1) to (1.35, 0.25);
	\draw[->, draw=red] (2.55, 0.1) to[bend left=45] (2.95, 0.1);
\end{tikzpicture}
\end{minipage}
\begin{minipage}{.1\textwidth}
\centering
\begin{tikzpicture}
	\node (0,0) {$\rightarrow$};
\end{tikzpicture}
\end{minipage}
\begin{minipage}{.4\textwidth}
\centering
\begin{tikzpicture}
	\draw (0.25, 0) -- (4.75, 0);
	\foreach \x in {1,...,9} {
		\draw (0.5 * \x, -0.05) -- (0.5 * \x, 0.05);
	}
	\draw[fill=black] (1,0) circle (0.05);
	\draw[fill=black] (1.5,0) circle (0.05);
	\draw[fill=black] (3,0) circle (0.05);
	\draw[fill=black] (4,0) circle (0.05);
	\draw[fill=black] (4.5,0) circle (0.05);
	
	\draw (0.25, 0.75) -- (0.5, 1);
	\draw (0.5, 1) -- (1, 0.5);
	\draw (1, 0.5) -- (1.5, 1);
	\draw (1.5, 1) -- (2, 1.5);
	\draw (2, 1.5) -- (2.5, 1);
	\draw (2.5, 1) -- (3, 0.5);
	\draw (3, 0.5) -- (3.5, 1);
	\draw (3.5, 1) -- (4, 0.5);
	\draw (4, 0.5) -- (4.5, 1);
	\draw (4.5, 1) -- (4.75, 1.25);
\end{tikzpicture}
\end{minipage}
\caption{Evolution of TASEP and its height function.}
\end{figure}

Two standard examples of initial data for TASEP are the \emph{step initial data} (when $X_0(k) = -k$ for $k \geq 1$) and \emph{$d$-periodic initial data} (when $X_0(k) = -d(k-1)$ for $k \in \Z$) with $d \geq 2$. Analysis of TASEP with one of this initial data is much easier than in the general case. In particular the results presented in Sections~\ref{sec:exact} and \ref{sec:123} below were known from~\cite{borFerPrahSasam,bfp,ferrariMatr} and served as a starting point for our work.

\section{Distribution function of TASEP}
\label{sec:distr}

If there are a finite number of particles, we can alternatively denote their positions 
\[
\vec x \in \Omega_N = \bigl\{x_N < \cdots < x_1 \bigr\} \subset \Z^N,
\] 
where $\Omega_N$ is called  the \emph{Weyl chamber}.
The transition probabilities for TASEP with a finite number of particles was first obtained in~\cite{MR1468391} using \emph{(coordinate) Bethe ansatz}. 

\begin{proposition}[Sch\"{u}tz's formula]\label{prop:Schuetz}
The transition probability for $2 \le N <\infty$ TASEP particles has a determinantal form
\begin{equation}\label{eq:Green}
	\pp \bigl(X_t = \vec x\, |\, X_0 = \vec y\bigr)=\det\big[F_{i - j}(x_{N+1 - i}-y_{N+1 - j},t)\big]_{1\leq i,j\leq N}
\end{equation}
with $\vec x, \vec y \in \Omega_N$,  and
\begin{equation}\label{eq:defF}
F_{n}(x, t)=\frac{(-1)^n}{2\pi \I} \oint_{\Gamma_{0,1}} dw\,\frac{(1-w)^{-n}}{w^{x-n+1}}e^{t(w-1)},
\end{equation}
where $\Gamma_{0,1}$ is any simple loop oriented anticlockwise which includes $w=0$ and $w=1$.
\end{proposition}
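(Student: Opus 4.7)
The plan is to verify that the right-hand side of~\eqref{eq:Green}, as a function of $\vec x\in\Omega_N$ and $t\geq 0$ with $\vec y$ fixed, solves the forward Kolmogorov equation for TASEP with initial data $\delta_{\vec x,\vec y}$; uniqueness on the countable state space $\Omega_N$ then identifies it with the transition probability. Following the coordinate Bethe ansatz, I would first extend the unknown from $\Omega_N$ to all of $\Z^N$ and reduce the TASEP master equation to the free $N$-particle equation
\[
\partial_t u(\vec x,t) = \sum_{k=1}^{N}\bigl[u(\vec x - e_k,t) - u(\vec x,t)\bigr]
\]
subject to the boundary condition $u(\vec x - e_k,t) = u(\vec x,t)$ whenever $x_k - x_{k+1} = 1$. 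A short bookkeeping of the allowed jump-in and jump-out terms at configurations with adjacent particles yields this reduction: when $x_k = x_{k+1}+1$, the would-be gain from $\vec x - e_k$ is absent (since $\vec x - e_k\notin\Omega_N$) and the would-be loss from particle $k+1$ is absent (since its target site $x_{k+1}+1 = x_k$ is occupied), and the excess free terms cancel precisely under the stated boundary identity.

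The verification of both properties for $D(\vec x,t) := \det\!\bigl[F_{i-j}(x_{N+1-i}-y_{N+1-j},t)\bigr]_{i,j=1}^{N}$ reduces to two elementary contour identities for $F_n$. The temporal identity
\[
\partial_t F_n(x,t) = F_n(x-1,t) - F_n(x,t),
\]
obtained by differentiating under the integral and rewriting the factor $(w-1)/w^{x-n+1}$ in partial fractions, gives the free equation for $D$ via row multilinearity of the determinant (each row depends only on one $x$-coordinate). The spatial identity
\[
F_{n+1}(x,t) = F_n(x,t) + F_{n+1}(x+1,t),
\]
obtained from the same manipulation on $1 - 1/w$, is the key to the boundary condition. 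Applied entry-wise, it shows that when $x_k = x_{k+1}+1$ the row of $M(\vec x - e_k)$ with index $i = N+1-k$ (evaluated at $x_k - 1 = x_{k+1}$) splits as the sum of the row of $M(\vec x)$ with index $i = N-k$ (at $x_{k+1}$) and the row of $M(\vec x)$ with index $i = N+1-k$ (at $x_k$). Row linearity then writes $\det M(\vec x-e_k) = \det M(\vec x) + A$, with $A$ a determinant whose rows $N-k$ and $N+1-k$ are identical and so $A = 0$, yielding $D(\vec x - e_k) = D(\vec x)$.

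For the initial condition $D(\vec x,0) = \delta_{\vec x,\vec y}$ with $\vec x,\vec y\in\Omega_N$, I would compute the residues of $F_n(x,0)$ explicitly. One finds $F_0(x,0) = \delta_{x,0}$; cancellation of the residues at $w=0$ and $w=1$ gives $F_n(x,0) = 0$ for $n\geq 1$, $x\geq 1$; and the fact that $(1-w)^{|n|}$ is a polynomial gives $F_n(x,0) = 0$ for $n\leq -1$ with $x \geq 1$ or $x < n$. These vanishing patterns, combined with the strict Weyl-chamber ordering of $\vec x$ and $\vec y$, restrict the Leibniz expansion of $\det M$ to the identity permutation and force $\vec x = \vec y$; the diagonal then contributes $\prod_i F_0(0,0) = 1$.

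The main obstacle I anticipate is the boundary-condition step: one must carry the reversed index convention $i \leftrightarrow N+1-i$ through the row-sum identity above and recognise that the auxiliary matrix in $A$ really does have two identical rows in the original labelling. Once this row manipulation is set up cleanly, the remaining pieces (the temporal identity and the residue analysis at $t=0$) are essentially calculational, and uniqueness for the forward Kolmogorov equation on $\Omega_N$, which holds because the total jump rate out of any state is at most $N$, closes the argument.
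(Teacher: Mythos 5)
Your proposal is correct and follows essentially the paper's own alternate proof in Section~\ref{ss:schutz_check} (``Direct check of Sch\"utz's formula''): reduce to the free master equation \eqref{eq:master} with boundary conditions \eqref{eq:boundary}, verify both using the two contour identities $\partial_t F_n = -\nablam F_n$ and $F_n = -\nablap F_{n+1}$ (which are your temporal and spatial identities in equivalent form) together with row/column multilinearity, and check the initial condition via the residue analysis of $F_n(x,0)$. The paper's primary exposition in Section~\ref{schutzba} instead \emph{derives} the formula constructively via the coordinate Bethe ansatz, but your verification route is exactly the one the paper presents as the direct check, with the same vanishing-determinant mechanism (two equal rows/columns after applying $\nablap$) for the boundary conditions.
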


In the rest of this section we provide a proof of this result using Bethe ansatz and in Section~\ref{ss:schutz_check} we show that Sch\"{u}tz's formula can alternatively be easily checked to satisfy the Kolmogorov forward equation. 

\subsection{Proof of Sch\"{u}tz's formula using Bethe ansatz.}
\label{schutzba}

In this section we will prove Proposition~\ref{prop:Schuetz} following the argument of~\cite{MR2824604}. We will consider $N \geq 2$ particles in TASEP and derive the master (Kolmogorov forward) equation for the process $X_t  = \bigl(X_t(1), \cdots, X_t(N)\bigr) \in \Omega_N$, where $\Omega_N$ is the Weyl chamber defined above. For a function $F : \Omega_N \to \R$ we introduce the operator
\[
\big(\CL^{(N)} F\big)(\vec x) = -\sum_{k=1}^N \1{x_k - x_{k+1} > 1} \bigl(\nablam_{\!k} F\bigr)(\vec x),
\]
where $x_{N+1} = -\infty$ and $\nablam_{\!k}$ is the discrete derivative
\begin{equation}\label{eq:L}
 	\nablam f(z) = f(z) - f(z-1), \qquad f : \Z \to \R,
\end{equation}
acting on the $k$-th argument of $F$. One can see that this is the infinitesimal generator of TASEP in the variables $X_t$. Thus, if 
\[
	P^{(N)}_t(\vec y, \vec x) = \pp \bigl(X_t = \vec x\, |\, X_0 = \vec y\bigr) 
\]
is the transition probability of $N$ particles of TASEP from $\vec y \in \Omega_N$ to $\vec x \in \Omega_N$, then \emph{the master equation} (=\emph{Kolmogorov forward equation}) is
\begin{equation}\label{eq:forward}
	\frac{d}{dt} P^{(N)}_t(\vec y, \cdot) = \CL^{(N)} P^{(N)}_t(\vec y, \cdot), \qquad P^{(N)}_0(\vec y, \cdot) = \delta_{\vec y, \cdot}.
\end{equation}

The idea of~\cite{Bethe} was to rewrite \eqref{eq:forward} as a differential equation with constant coefficients and boundary conditions, i.e. if $u^{(N)}_t : \Z^N \to \R$ solves
\begin{equation}\label{eq:master}
	\frac{d}{dt} u^{(N)}_t = -\sum_{k=1}^N \nablam_{\!k} u^{(N)}_t, \qquad u^{(N)}_0(\vec x) = \delta_{\vec y, \vec x},
\end{equation}
with the \emph{boundary conditions}
\begin{equation}\label{eq:boundary}
	 \nablam_{\!k} u^{(N)}_t (\vec x) = 0, \quad \text{when} \quad x_{k} = x_{k+1} + 1,
\end{equation}
then for $\vec x, \vec y \in \Omega_N$ one has 
\begin{equation}\label{eq:P_u}
	P^{(N)}_t(\vec y, \vec x) = u^{(N)}_t(\vec x).
\end{equation}

\begin{exercise}
 Prove this  by induction on $N \geq 1$.
\end{exercise}

\noindent The strategy is now to find a general solution to the master equation \eqref{eq:master} and then  a particular one which satisfies the boundary and initial conditions.  The method is known as \emph{(coordinate) Bethe ansatz}.

\subsubsection*{Solution to the master equation.}

For a fixed $\vec y \in \Z^N$, we are going to find a solution to the equation \eqref{eq:master}. For this we will consider indistinguishable particles, so that the state space $\{x_1, \cdots, x_N\} \subset \Z$ of the system is given by
\[
	\sum_{\sigma \in \S_N} u^{(N)}_t(\vec x_\sigma),
\]
where $\S_N$ is the symmetric group and $\vec x_\sigma = \bigl(x_{\sigma(1)}, \cdots, x_{\sigma(N)}\bigr)$. With this in mind we define the generating function 
\[
 \phi^{(N)}_t(\vec w) = \frac{1}{|\S_N|}\sum_{\vec x \in \Z^N} \sum_{\sigma \in \S_N} \vec w^{\vec x_\sigma} u^{(N)}_t(\vec x_\sigma),
\]
where $\vec w \in \C^{N}$, $\vec w^{\vec x} = z_1^{x_1} \cdots z_N^{x_N}$ and $|\S_N| = N!$. Since we would like the identity \eqref{eq:P_u} to hold, it is reasonable to assume that $\bigl|u^{(N)}_t(\vec x)\bigr| \leq \min_{i} \frac{t^{x_i - y_i}}{(x_i - y_i)!}$ which guarantees locally absolute convergence of the sum above and all the following computations. Then \eqref{eq:master} yields
\begin{align*}
 	\frac{d}{dt} \phi^{(N)}_t(\vec w) &= \frac{1}{|\S_N|} \sum_{\vec x \in \Z^N} \sum_{\sigma \in \S_N} \vec w^{\vec x_\sigma} \frac{d}{dt} u^{(N)}_t(\vec x_\sigma) \\
	&=- \frac{1}{|\S_N|} \sum_{\vec x \in \Z^N} \sum_{\sigma \in \S_N} \vec w^{\vec x_\sigma} \sum_{k=1}^N \nablam_{\!k} u^{(N)}_t(\vec x_\sigma) \\
	&= -\frac{1}{|\S_N|} \sum_{k=1}^N \sum_{\vec x \in \Z^N} \sum_{\sigma \in \S_N} \vec w^{\vec x_\sigma} \nablam_{\!k} u^{(N)}_t(\vec x_\sigma) \\
	&= \frac{1}{|\S_N|} \sum_{\vec x \in \Z^N} \sum_{\sigma \in \S_N} \vec w^{\vec x_\sigma} u^{(N)}_t(\vec x_\sigma) \sum_{k=1}^N \bigl(w_{\sigma(k)} - 1\bigr) \\
	&= \phi^{(N)}_t(\vec w) \sum_{k=1}^N \eps(w_k),
\end{align*}
where $\eps(w) = w-1$ for $w \in \C$. From the last identity we conclude that
\[
 	\phi^{(N)}_t(\vec w) = C(\vec w) \prod_{k=1}^N e^{\eps(w_k) t},
\]
for a function $C : \C^N \to \C$ which is independent of $t$, but can depend on $\vec y$. Then  Cauchy's integral theorem gives a solution to the master equation
\begin{align}\label{eq:gen_sol}
 	u^{(N)}_t(\vec x) = \frac{1}{(2\pi \I)^N} \sum_{\sigma \in \S_N} \oint_{\Gamma_0} d \vec w\, \frac{\phi^{(N)}_t(\vec w)}{\vec w_\sigma^{\vec x+1}} = \frac{1}{(2\pi \I)^N} \sum_{\sigma \in \S_N} \oint_{\Gamma_0} d\vec w\, C(\vec w) \prod_{k=1}^N \frac{e^{\eps(w_k) t}}{w_{\sigma(k)}^{x_k +1}},
\end{align}
where $\vec x + 1 = \bigl(x_1 + 1, \cdots, x_N+1\bigr)$ and $\Gamma_0$ is a contour in $\C^N$ around the origin. Our next goal it to find $C$ and $\Gamma_0$ such that this solution satisfied the initial and boundary conditions for \eqref{eq:master}. 

\subsubsection*{Satisfying the boundary conditions.}

We are going to find functions $C$ and a contour $\Gamma_0$ such that the solution \eqref{eq:gen_sol} satisfies the boundary conditions \eqref{eq:boundary}. We will look for a solution in a more general form than \eqref{eq:gen_sol}. More precisely, we will consider functions $C_\sigma(z)$ depending on $\sigma \in \S_N$, which gives us the \emph{Bethe ansatz solution}
\begin{equation}\label{eq:Bethe_u}
 	u^{(N)}_t(\vec x) = \frac{1}{(2\pi \I)^N} \sum_{\sigma \in \S_N} \oint_{\Gamma_0} d\vec w\, C_\sigma(\vec w) \prod_{k=1}^N \frac{e^{\eps(w_k) t}}{z_{\sigma(k)}^{x_k +1}}.
\end{equation}
In the case $x_{k} = x_{k+1} + 1$, the boundary condition \eqref{eq:boundary} yields
\begin{align*}
 	 \nablam_{\!k} u^{(N)}_t (\vec x) &=- \frac{1}{(2\pi \I)^N} \sum_{\sigma \in \S_N} \oint_{\Gamma_0} d \vec w \prod_{i \neq k, k+1} \frac{C_\sigma(\vec w)}{w_{\sigma(i)}^{x_i+1}} \frac{1 - w_{\sigma(k)}^{-1}}{w_{\sigma(k)}^{x_k} w_{\sigma(k+1)}^{x_{k+1}+1}} \prod_{i = 1}^N e^{\eps(w_i) t}\\
	 &= -\frac{1}{(2\pi \I)^N} \sum_{\sigma \in \S_N} \oint_{\Gamma_0} d \vec w \prod_{i \neq k, k+1} \frac{C_\sigma(\vec w)}{w_{\sigma(i)}^{x_i+1}} \frac{f(w_{\sigma(k)})}{(w_{\sigma(k)} w_{\sigma(k+1)})^{x_k}} \prod_{i = 1}^N e^{\eps(w_i) t} = 0.
\end{align*}
In particular, this identity holds if for $f(w) = 1 - w^{-1}$ we have
\[
 	\sum_{\sigma \in \S_N} \frac{C_\sigma(\vec w) f(w_{\sigma(k)})}{(w_{\sigma(k)} w_{\sigma(k+1)})^{x_k}} = 0,
\]
for all $\vec w \in \C^N$. Let $T_k \in \S_N$ be the transposition $(k, k+1)$, i.e. it interchanges the elements $k$ and $k+1$. Then the last identity holds if we have
\[
	C_\sigma(\vec x) f(w_{\sigma(k)}) + C_{T_k \sigma}(\vec w) f(w_{\sigma(k + 1)}) = 0.
\]
In particular, one can see that the following functions satisfy this identity
\begin{equation}\label{eq:C_sigma}
 	C_\sigma(\vec w) = \sgn(\sigma) \prod_{i=1}^N f(w_{\sigma(i)})^i\, \psi(\vec w),
\end{equation}
for any function $\psi : \C^N \to \R$. Thus we need to find a specific function $\psi$ so that the initial condition in \eqref{eq:master} is satisfied.

\subsubsection*{Satisfying the initial condition.}

Since the equation \eqref{eq:master} preserves the Weyl chamber, it is sufficient to check the initial condition only for $\vec x, \vec y \in \Omega_N$. Combining \eqref{eq:Bethe_u} with \eqref{eq:C_sigma}, the initial condition at $t = 0$ is given by
\begin{equation}\label{eq:u_init}
 	\frac{1}{(2\pi \I)^N} \sum_{\sigma \in \S_N} \oint_{\Gamma_0} d\vec w \frac{C_\sigma(\vec w)}{w_{\sigma}^{\vec x +1}} = \delta_{\vec y, \vec x}.
\end{equation}
If $\mathrm{id} \in \S_N$ is the identity permutation and $C_{\mathrm{id}}(\vec w) = \vec w^{\vec y}$ then obviously
\[
 	\frac{1}{(2\pi \I)^N} \oint_{\Gamma_0} d \vec w \frac{C_{\mathrm{id}}(\vec w)}{w^{\vec x +1}} = \delta_{\vec y, \vec x}.
\]
For this to hold we need to choose the function $\psi$ in \eqref{eq:C_sigma} to be
\[
	\psi(\vec w) = \prod_{i=1}^N f(w_{i})^{-i} w_i^{y_i}.
\]
Thus, a candidate for the solution is given by
\[
 	u^{(N)}_t(\vec x) = \frac{1}{(2\pi \I)^N} \sum_{\sigma \in \S_N} \sgn(\sigma) \oint_{\Gamma_0} d \vec w\, \prod_{k=1}^N \frac{f(w_{k})^{k - \sigma(k)} e^{\eps(w_k) t}}{w_{\sigma(k)}^{x_k - y_{\sigma(k)} +1}},
\]
which can be written as Sch\"{u}tz's formula \eqref{eq:Green}. It is obvious that the contour $\Gamma_{0}$ should go around $0$ and $1$, since otherwise the determinant in \eqref{eq:Green} will vanish when $\vec x$ and $\vec y$ are far enough. 

In order to complete the we still need to prove that this solution satisfies the initial condition. To this end we notice that for $n \geq 0$ we have
\[
 	F_{-n}(x, 0) =\frac{(-1)^n}{2\pi \I} \oint_{\Gamma_{0}} dw\,\frac{(1-w)^{n}}{w^{x+n+1}},
\]
which in particular implies that $F_{-n}(x, 0) = 0$ for $x < -n$ and $x > 0$, and $F_{0}(x, 0) = \delta_{x, 0}$. In the case $x_N < y_N$, we have $x_N < y_k$ for all $k = 1, \ldots, N-1$, and $x_N - y_{N+1 - j} < 1-j$ since $y \in \Omega_N$. This yields $F_{1 - j}(x_{N}-y_{N+1 - j},0) = 0$ and the determinant in \eqref{eq:Green} vanishes, because the matrix contains a row of zeros. If $x_N \geq y_N$, then we have $x_k > y_N$ for all $k = 1, \ldots, N-1$, and all entries of the first column in the matrix from \eqref{eq:Green} vanish, except the first entry which equals $\delta_{x_N, y_N}$. Repeating this argument for $x_{N-1}$, $x_{N-2}$ and so on, we obtain that the matrix is upper-triangular with delta-functions at the diagonal, which gives us the claim.

\begin{remark}
 	Similar computations lead to the distribution function of ASEP~\cite{MR2824604}. Unfortunately, this distribution function doesn't have the determinantal form as \eqref{eq:Green} which makes its analysis significantly more complicated.
\end{remark}

\subsection{Direct check of Sch\"{u}tz's formula.}
\label{ss:schutz_check}

We will show that the determinant in \eqref{eq:Green}  satisfies the master equation \eqref{eq:master} with the boundary conditions \eqref{eq:boundary}, providing an alternate proof to the one in Section \ref{schutzba}. To this end we will use only the following properties of the functions $F_{n}$, which can be easily proved,
\begin{equation}\label{eq:F_props}
\partial_t F_{n}(x,t) = -\nablam F_{n}(x,t), \qquad F_{n}(x,t) = -\nablap F_{n+1}(x,t),
\end{equation}
where $\nablam$ has been defined in \eqref{eq:L} and $\nablap f(x) = f(x+1) - f(x)$. Furthermore, it will be convenient to define the vectors
\begin{equation}\label{eq:propsF}
H_i(x, t) = \bigl[F_{i - 1}(x-y_{N},t), \cdots, F_{i - N}(x-y_{1},t) \bigr]'.
\end{equation}
Then, denoting by $u_t^{(N)}(\vec x)$ the right-hand side of \eqref{eq:Green}, we can write
\begin{align*}
\partial_t u_t^{(N)}(\vec x) &= \sum_{k = 1}^N \det \bigl[\cdots, \partial_t H_k(x_{N+1 - k}, t), \cdots\bigr]\\
&= -\sum_{k = 1}^N \det \bigl[\cdots, \nablam H_k(x_{N+1 - k}, t), \cdots\bigr]\\
&= -\sum_{k=1}^N \nablam_{\!k} \det \bigl[F_{i - j}(x_{N+1 - i}-y_{N+1 - j},t)\bigr]_{1\leq i,j\leq N},
\end{align*}
where the operators in the first and second sums are applied only to the $k$-th column, and where we made use of the first identity in \eqref{eq:propsF} and multi-linearity of determinants. Here, $\nablam_{\!k}$ is as before the operator $\nablam$ acting on $x_k$. 

Now, we will check the boundary conditions \eqref{eq:boundary}. If $x_{k} = x_{k+1} + 1$, then using again multi-linearity of determinants and the second identity in \eqref{eq:propsF} we obtain
\begin{align*}
\nablam_{\!k} \det \bigl[F_{i - j}(x_{N+1 - i}\; -\; &y_{N+1 - j},t)\bigr] \\
&= \det \bigl[\cdots, \nablam H_{N+1 - k}(x_{k}, t), H_{N - k}(x_{k+1}), \cdots\bigr]\\
&=  \det \bigl[\cdots, \nablap H_{N+1 - k}(x_{k} - 1, t), H_{N - k}(x_{k+1}), \cdots\bigr]\\
&=  \det \bigl[\cdots, \nablap H_{N+1 - k}(x_{k + 1}, t), H_{N - k}(x_{k+1}), \cdots\bigr]\\
&=  \det \bigl[\cdots, -H_{N - k}(x_{k + 1}, t), H_{N - k}(x_{k+1}), \cdots\bigr].
\end{align*}
The latter determinant vanishes, because the matrix has two equal columns. A proof of the initial condition was provided at the end of the previous section.

\section{Determinantal point processes}

In this section we provide some results on determinantal point processes, which can be found e.g. in~\cite{Bor11,borodinRains,johansson}. These processes were studied first in~\cite{Macchi1975} as `fermion' processes and the name 'determinantal' was introduced in~\cite{borOlsh00}.

\begin{definition}\label{def:DPP}
Let $\Xf$ be a discrete space and let $\mu$ be a measure  on $\Xf$. A \emph{determinantal point process} on the space $\Xf$ with {\rm correlation kernel} $\CK:\Xf\times\Xf\to \C$ is a signed\footnote{ In our analysis of TASEP we will be using only a counting measure $\mu$ assigning a unit mass to each element of $\Xf$. However, a determinantal point process can be defined in full generality on a locally compact Polish space with a Radon measure (see~\cite{BHKPV}). Moreover, in contrast to the usual definition we define the measure $\mathcal{W}$ to be signed rather than a probability measure. This fact will be crucial in Section~\ref{eq:random_walks} below, and we should also note that all the properties of determinantal point processes which we will use don't require $\mathcal{W}$ to be positive.
} measure $\mathcal{W}$ on $2^\Xf$ (the power set of $\Xf$), integrating to $1$ and such that for any points $x_1, \cdots, x_n \in \Xf$ one has the identity
\begin{equation}\label{eq:DPP}
	\sum_{\substack{Y \subset \Xf: \\ \{x_1,\ldots,x_n\}\subset Y}} \hspace{-0.5cm}\mathcal{W}(Y) = \det \bigl[\CK(x_i,x_j)\bigr]_{1 \leq i,j \leq n}\, \prod_{k=1}^n \mu(x_k),
\end{equation}
where the sum runs over finite subsets of $\Xf$.
\end{definition}

\noindent The determinants on the right-hand side are called \emph{$n$-point correlation functions} or \emph{joint intensities} and denoted by
\begin{equation}\label{eq:corr}
	\varrho^{(n)}(x_1,\ldots,x_n) = \det \bigl[\CK(x_i,x_j)\bigr]_{1 \leq i,j \leq n}.
\end{equation}
One can easily see that these functions have the following properties: they are symmetric under permutations of arguments and vanish if $x_i = x_j$ for $i \neq j$.

\begin{exercise}  
In the case that $\mathcal{W}$ is a positive measure,  show that if $K$ is the kernel of the orthogonal projection onto a subset of dimension $n$, then the number of points in $\Xf$ is almost surely equal to $n$.
\end{exercise}
 
Usually, it is non-trivial to show that a process is determinantal. Below we provide several examples of determinantal point processes
(these ones are not signed).

\begin{example}[Non-intersecting random walks]\label{ex:RWs}
Let $X_i(t)$, $1 \leq i \leq n$, be independent time-homogeneous Markov chains on $\Z$ with one step transition probabilities $p_t(x,y)$ satisfying the identity \break$p_t(x,x-1) + p_t(x,x+1) = 1$ (i.e. every time each random walk makes one unit step either to the left or to the right). Let furthermore $X_i(t)$ be reversible with respect to a probability measure $\pi$ on $\Z$, i.e. $\pi(x) p_t(x,y) = \pi(y) p_t(y, x)$ for all $x, y \in \Z$ and $t \in \N$. Then, conditioned on the events that the values of the random walks at times $0$ and $2t$ are fixed, i.e. $X_i(0) = X_i(2t) = x_i$ for all $1 \leq i \leq n$ where each $x_i$ is even, and no two of them intersect on the time interval $[0, 2t]$, the configuration of mid-positions $\{X_i(t) : 1 \leq i \leq n\}$ is a determinantal point process on $\Z$ with respect to the measure $\pi$, i.e.
\begin{equation}\label{eq:RWs}
\pp\bigl[ X_i(t) = z_i, 1 \leq i \leq n \bigr] = \det \bigl[\CK(z_i,z_j)\bigr]_{1 \leq i,j \leq n} \prod_{k=1}^n \pi(z_k),
\end{equation}
where the probability is conditioned by the described event (assuming of course that its probability is non-zero). Here, the correlation kernel $\CK$ is given by
\begin{equation}\label{eq:RWsKernel}
\CK(u, v) = \sum_{i=1}^n \psi_i(u) \phi_i(v),
\end{equation}
where the functions $\psi_i$ and $\phi_i$ are defined by
\[
\psi_i(u) = \sum_{k=1}^n \left(A^{-\frac{1}{2}}\right)_{i, k} \frac{p_t(x_k, u)}{\pi(u)}, \qquad \phi_i(v) = \sum_{k=1}^n \left(A^{-\frac{1}{2}}\right)_{i, k} \frac{p_t(x_k, v)}{\pi(v)},
\]
with the matrix $A$ having the entries $A_{i, k} = \frac{p_{2t}(x_i, x_k)}{\pi(x_k)}$. Invertibility of the matrix $A$ follows from the fact that the probability of the condition is non-zero and Karlin-McGregor formula (see Exercise~\ref{ex:KMcG} below). This result is a particular case of a more general result of~\cite{johansson} and it can be obtained from Karlin-McGregor formula similarly to~\cite[Cor.~4.3.3]{BHKPV}.
\end{example}

\begin{exercise}
Prove that the mid-positions $\{X_i(t) : 1 \leq i \leq n\}$ of the random walks defined in the previous example form a determinantal process with the correlation kernel \eqref{eq:RWsKernel}.
\end{exercise}

\begin{exercise}[Karlin-McGregor formula \cite{karlinMcGregor}]\label{ex:KMcG}
Let $X_i$, $1 \leq i \leq n$, be i.i.d. (time-inhomogeneous) Markov chains on $\Z$ with transition probabilities $p_{k, \ell}(s,t)$ satisfying $p_{k, k+1}(t, t+1) + p_{k, k-1}(t, t+1) = 1$ for all $k$ and $t > 0$. Let us fixed initial states $X_i(0) = k_i$ for $k_1 < k_2 < \cdots < k_n$ such that each $k_i$ is even. Then the probability that at time $t$ the Markov chains are at the states $\ell_1 < \ell_2 < \cdots < \ell_n$, and that no two of the chains intersect up to time $t$, equals $\det \bigl[p_{k_i,\ell_j}(0, t)\bigr]_{1 \leq i, j \leq n}$. 

Hint (this idea is due to S.R.S. Varadhan): for a permutation $\sigma \in \S_n$ and $0 \leq s \leq t$, define the process
\begin{equation}
 M_\sigma(s) = \prod_{i=1}^n \pp\bigl( X_{i}(t) = \ell_{\sigma(i)} \big| X_i(s)\bigr),
\end{equation}
which is a martingale with respect to the filtration generated by the Markov chains $X_i$. This implies that the process $M = \sum_{\sigma \in \S_n} \sgn(\sigma) M_\sigma$ is also a martingale. Obtain the Karlin-McGregor formula by applying the optional stopping theorem to $M$ for a suitable stopping time.
\end{exercise}

\begin{example}[Gaussian unitary ensemble]
The most famous example of determinantal point processes is the \emph{Gaussian unitary ensemble} (GUE) introduced by Wigner. Let us define the $n\times n$ matrix $A$ to have i.i.d. standard complex Gaussian entries and let $H = \frac{1}{\sqrt 2} (A + A^*)$. Then the eigenvalues $\lambda_1 > \lambda_2 > \cdots > \lambda_n$ of $H$ form a determinantal point process on $\R$ with the correlation kernel
\[
 \CK(x,y) = \sum_{k = 0}^{n-1} H_k(x) H_k(y),
\]
with respect to the Gaussian measure $d \mu(x) = \frac{1}{\sqrt{2 \pi}} e^{-x^2 / 2} dx$, where $H_k$ are Hermite polynomials which are orthonormal on $L^2(\R, \mu)$. A proof of this result can be found in~\cite[Ch. 3]{mehta}.
\end{example}

\begin{example}[Aztec diamond tilings]
The Aztec diamond is a diamond-shaped union of lattice squares (see Figure~\ref{fig:aztec}). Let's now color some squares in gray following the pattern of a chess board and so that all the bottom left squares are colored. It is easy to see that the Aztec diamond can be perfectly covered by domino tilings, which are $2 \times 1$ or $1 \times 2$ rectangles, and the number of tilings growth exponentially in the width of the diamond. Let's draw a tiling uniformly from all possible tilings and let's mark gray left squares of horizontal dominos and gray bottom squares of vertical dominos. This random set is a determinantal point process on the lattice $\Z^2$~\cite{MR2118857}.
\end{example}

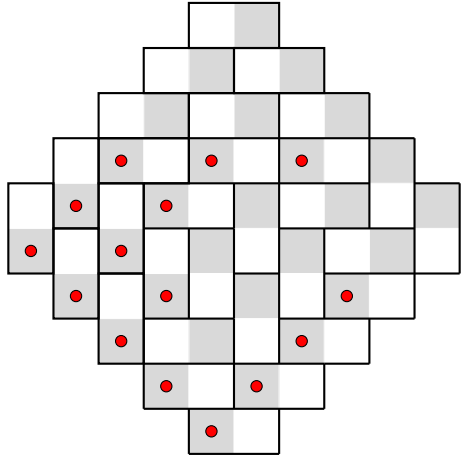
\begin{figure}
\centering
\begin{tikzpicture}[scale=1.5]
	\foreach \x in {0,...,5} {
		\foreach \y in {0,...,4} {
			\filldraw[fill=gray!30!white, draw=white] (-2 + 0.4 * \x + 0.4 * \y, -0.4 + 0.4 * \x - 0.4 * \y) rectangle (-2 + 0.4 * \x + 0.4 * \y + 0.4, 0.4 * \x - 0.4 * \y);
		}
	}

	\draw[thick] (0, 0) -- (-0.8, 0);
	\draw[thick] (-0.8, 0.4) -- (-0.8, 0);
	\draw[thick] (0, 0.4) -- (0, 0);
	\draw[thick] (0, 0.4) -- (-0.8, 0.4);
	\draw[thick] (0, 0) -- (0, -0.8);
	\draw[thick] (-0.4, 0) -- (-0.4, -0.8);
	\draw[thick] (-0.8, 0) -- (-0.8, -0.8);
	\draw[thick] (0, -0.8) -- (-0.8, -0.8);
	\draw[thick] (-0.8, -1.2) -- (-0.8, -0.8);
	\draw[thick] (0, -1.2) -- (0, -0.8);
	\draw[thick] (0, -1.2) -- (-0.8, -1.2);
	\draw[thick] (-0.8, -1.2) -- (-0.8, -1.6);
	\draw[thick] (0, -1.2) -- (0, -1.6);
	\draw[thick] (0, -1.6) -- (-0.8, -1.6);
	\draw[thick] (0.8, -1.2) -- (0.8, -1.6);
	\draw[thick] (0, -1.6) -- (0.8, -1.6);
	\draw[thick] (0, -1.2) -- (0.8, -1.2);
	\draw[thick] (0.4, -1.6) -- (0.4, -2);
	\draw[thick] (-0.4, -1.6) -- (-0.4, -2);
	\draw[thick] (-0.4, -2) -- (0.4, -2);
	\draw[thick] (0.4, -1.2) -- (0.4, 1.2);
	\draw[thick] (0, -0.4) -- (0.4, -0.4);
	\draw[thick] (0, 0.4) -- (0.4, 0.4);
	\draw[thick] (-0.4, 0.4) -- (-0.4, 1.2);
	\draw[thick] (-0.4, 0.8) -- (0.4, 0.8);
	\draw[thick] (-0.4, 1.2) -- (0.4, 1.2);
	\draw[thick] (0.4, -0.8) -- (1.6, -0.8);
	\draw[thick] (0.8, -1.2) -- (1.2, -1.2);
	\draw[thick] (1.2, -1.2) -- (1.2, -0.8);
	\draw[thick] (0.8, -0.8) -- (0.8,0);
	\draw[thick] (0.4, 0) -- (1.6,0);
	\draw[thick] (0.8, -0.4) -- (2,-0.4);
	\draw[thick] (1.6, -0.8) -- (1.6,0.8);
	\draw[thick] (2, -0.4) -- (2,0.4);
	\draw[thick] (1.2, 0) -- (1.2,1.2);
	\draw[thick] (1.6, 0.4) -- (2,0.4);
	\draw[thick] (0.4, 0.4) -- (1.2,0.4);
	\draw[thick] (0.4, 0.8) -- (1.6,0.8);
	\draw[thick] (0.4, 1.2) -- (1.2,1.2);
	\draw[thick] (-0.4, 2) rectangle (0.4,1.6);
	\draw[thick] (0, 1.6) rectangle (0.8,1.2);
	\draw[thick] (0, 1.6) rectangle (-0.8,1.2);
	\draw[thick] (-0.4, 1.2) rectangle (-1.2,0.8);
	\draw[thick] (-0.4, 0.8) rectangle (-1.2,0.4);
	\draw[thick] (-1.2, 0.4) rectangle (-0.8,-0.4);
	\draw[thick] (-1.2, -0.4) rectangle (-0.8,-1.2);
	\draw[thick] (-1.6, 0.8) rectangle (-1.2,0);
	\draw[thick] (-1.6, 0) rectangle (-1.2,-0.8);
	\draw[thick] (-2, 0.4) rectangle (-1.6,-0.4);
	
	\draw[fill=red] (-1.8,-0.2) circle (0.05);
	\draw[fill=red] (-1.4,0.2) circle (0.05);
	\draw[fill=red] (-1,0.6) circle (0.05);
	\draw[fill=red] (-1,-0.2) circle (0.05);
	\draw[fill=red] (-0.2,0.6) circle (0.05);
	\draw[fill=red] (0.6,0.6) circle (0.05);
	\draw[fill=red] (-0.6,0.2) circle (0.05);
	\draw[fill=red] (-1.4,-0.6) circle (0.05);
	\draw[fill=red] (-1,-1) circle (0.05);
	\draw[fill=red] (-0.6,-1.4) circle (0.05);
	\draw[fill=red] (-0.2,-1.8) circle (0.05);
	\draw[fill=red] (-0.6,-0.6) circle (0.05);
	\draw[fill=red] (0.2,-1.4) circle (0.05);
	\draw[fill=red] (0.6,-1) circle (0.05);
	\draw[fill=red] (1,-0.6) circle (0.05);
\end{tikzpicture}
\caption{Aztec diamond tiling.}
\label{fig:aztec}
\end{figure}

\subsection{Probability of an empty region.}

A useful property of determinantal point processes is that the `probability' (recall that the measure in Definition~\ref{def:DPP} is signed) of having an empty region is given by a Fredholm determinant.

\begin{lemma}\label{lem:GapProbability}
Let $\mathcal{W}$ be a determinantal point process on a discrete set $\Xf$ with a measure $\mu$ and with a correlation kernel $\CK$. Then for a Borel set $B \subset \Xf$ one has
\[
	\sum_{X \subset \Xf \setminus B} \hspace{-0.2cm} \mathcal{W}(X) = \det(I - \CK)_{\ell^2(B, \mu)},
\]
where the latter is the Fredholm determinant defined by
\begin{equation}\label{eq:Fredholm}
 \det(I - \CK)_{\ell^2(B, \mu)} = \sum_{n\geq 0}\frac{(-1)^n}{n!} \int_{B^n} \det \bigl[\CK(y_i,y_j)\bigr]_{1 \leq i,j \leq n}\, d\mu(y_1) \cdots d\mu(y_n).
\end{equation}
\end{lemma}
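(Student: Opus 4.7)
The plan is the standard inclusion--exclusion computation that turns a ``gap'' probability for a point process into a Fredholm series in its correlation functions, and then to recognize the resulting sum as the Fredholm determinant~\eqref{eq:Fredholm}. The only place where the determinantal structure is used is the final identification of $n$-point correlation functions with determinants of $\CK$.

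\textbf{Step 1: Encode the gap event.} For any finite subset $X \subset \Xf$, one has the identity
\[
\uno{X \cap B = \emptyset} = \prod_{x \in X}\bigl(1 - \uno{B}(x)\bigr) = \sum_{Y \subset X} (-1)^{|Y|} \uno{Y \subset B},
\]
where the second equality is just the binomial expansion of the product. Multiplying by $\mathcal{W}(X)$ and summing over $X \subset \Xf$ gives
\[
\sum_{X \subset \Xf \setminus B} \mathcal{W}(X) = \sum_{X \subset \Xf} \mathcal{W}(X) \sum_{Y \subset X} (-1)^{|Y|} \uno{Y \subset B}.
\]

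\textbf{Step 2: Swap the order of summation.} Interchanging the two sums (this is where one would check absolute summability; for a signed measure $\mathcal{W}$ integrating to $1$ with sufficiently nice correlation kernel, the resulting series will converge absolutely under the standing assumption that the Fredholm determinant in~\eqref{eq:Fredholm} is well defined) gives
\[
\sum_{X \subset \Xf \setminus B} \mathcal{W}(X) = \sum_{Y \subset B} (-1)^{|Y|} \sum_{\substack{X \subset \Xf \\ X \supset Y}} \mathcal{W}(X).
\]
Now apply the defining identity~\eqref{eq:DPP} of a determinantal point process to the inner sum: for $Y = \{y_1, \ldots, y_n\} \subset B$ with distinct elements,
\[
\sum_{\substack{X \subset \Xf \\ X \supset Y}} \mathcal{W}(X) = \det\bigl[\CK(y_i, y_j)\bigr]_{1 \leq i,j \leq n} \prod_{k=1}^{n} \mu(y_k).
\]

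\textbf{Step 3: Recognize the Fredholm series.} Organize the sum over $Y$ by cardinality $n = |Y|$. Since the determinant is symmetric in its arguments and vanishes whenever two arguments coincide, we may replace the sum over unordered $n$-element subsets by $\tfrac{1}{n!}$ times the sum over ordered $n$-tuples in $B^n$:
\[
\sum_{X \subset \Xf \setminus B} \mathcal{W}(X) = \sum_{n \geq 0} \frac{(-1)^n}{n!} \sum_{y_1, \ldots, y_n \in B} \det\bigl[\CK(y_i, y_j)\bigr]_{1 \leq i,j \leq n} \prod_{k=1}^{n} \mu(y_k),
\]
with the $n=0$ term interpreted as $1$ (the empty determinant). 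Comparing with~\eqref{eq:Fredholm} finishes the argument.

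The only real obstacle is the justification of the interchange of summations in Step~2, since $\mathcal{W}$ is only assumed to be a signed measure. The cleanest way is to argue first that the right-hand side of~\eqref{eq:Fredholm} is absolutely convergent under the hypotheses in force when this lemma is applied (for TASEP the kernels involved will be trace-class on the relevant $\ell^2$ space, so Hadamard's inequality controls $\det[\CK(y_i,y_j)]$ and gives the required summability), and then the Fubini-type swap is automatic.
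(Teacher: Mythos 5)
Your proof is correct and follows essentially the same inclusion--exclusion route as the paper: expand $\prod_{x\in X}(1-\uno{B}(x))$, swap summations, apply the defining identity \eqref{eq:DPP}, and reassemble the Fredholm series using symmetry and the vanishing of the determinant on the diagonal. The only cosmetic difference is that you sum over unordered subsets $Y\subset X$ and introduce the $1/n!$ when passing to ordered tuples at the end, whereas the paper introduces $1/n!$ immediately via ordered distinct tuples; both are the same argument.
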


\begin{proof}
Using Definition~\ref{def:DPP} and the correlation functions \eqref{eq:corr} we can write
\begin{align*}
\sum_{X \subset \Xf \setminus B} \hspace{-0.2cm} \mathcal{W}(X) &= \sum_{X \subset \Xf} \mathcal{W}(X)\, \prod_{x \in X}\left((1-\1{B}(x)\right) \\
&= \sum_{n\geq 0} \frac{(-1)^n}{n!} \sum_{X \subset \Xf} \mathcal{W}(X)\, \sum_{\substack{x_1,\ldots,x_n \in X \\ x_i \neq x_j}}\prod_{k=1}^n\1{B}(x_{k})\\
&= \sum_{n\geq 0} \frac{(-1)^n}{n!} \sum_{\substack{y_1,\ldots,y_n \in B \\ y_i \neq y_j}} \sum_{X \subset \Xf} \mathcal{W}(X)\, \sum_{x_1,\ldots,x_n \in X}\prod_{k=1}^n\1{x_{k} = y_k}\\
&= \sum_{n\geq 0} \frac{(-1)^n}{n!} \sum_{\substack{y_1,\ldots,y_n \in B \\ y_i \neq y_j}} \sum_{\substack{X \subset \Xf \\ \{y_1,\ldots,y_n\} \subset X}} \hspace{-0.5cm} \mathcal{W}(X)\\
&=\sum_{n\geq 0}\frac{(-1)^n}{n!}\sum_{\substack{y_1,\ldots,y_n \in B \\ y_i \neq y_j}} \hspace{-0.3cm} \varrho^{(n)}(y_1,\ldots,y_n) \prod_{k=1}^n \mu(y_k)\\
&= \sum_{n\geq 0}\frac{(-1)^n}{n!}\int_{B^n} \det \bigl[\CK(y_i,y_j)\bigr]_{1 \leq i,j \leq n}\, d\mu(y_1) \cdots d\mu(y_n)\\
&= \det(I - \CK)_{\ell^2(B, \mu)},
\end{align*}
which is exactly our claim. Note, that the condition $y_i \neq y_j$ cabe be omitted, because $\varrho^{(n)}$ vanishes on the diagonals.
\end{proof}

\begin{exercise}
 Prove that if $\Xf$ is finite and $\mu$ is the counting measure, then the Fredholm determinant \eqref{eq:Fredholm} coincides with the usual determinant.
\end{exercise}

\subsection{$\mathbf L$-ensembles of signed measures.}

A more restrictive definition of a determinantal process was introduced in~\cite{borodinRains}. In order to simplify our notation, we take the measure $\mu$ in this section to be the counting measure and we will skip it in notation below.

With the notation of Definition~\ref{def:DPP}, let us be given a function $L:\Xf\times\Xf\to\C$. For any finite subset $X = \{x_1, \cdots, x_n\} \subset \Xf$ we define a symmetric minor $L_X = \bigl[L(x_i,x_j)\bigr]_{x_i,x_j \in X}$. Then one can define a (signed) measure on $\Xf$, called the \emph{$L$-ensemble}, by
\begin{equation}\label{eq:LEnsemble}
\mathcal{W}(X)=\frac{\det(L_X)}{\det(1+L)_{\ell^2(\Xf)}},
\end{equation}
for $X \subset \Xf$, if the Fredholm determinant $\det(1+L)_{\ell^2(\Xf)}$ is non-zero (recall the definition \eqref{eq:Fredholm}).

\begin{exercise}
 Check that the measure $\mathcal{W}$ defined in \eqref{eq:LEnsemble} integrates to $1$.
\end{exercise}

The requirement $\det(1+L)_{\ell^2(\Xf)} \neq 0$ guarantees that there exists a unique function\break$(1+L)^{-1} : \Xf \times \Xf \to \C$ such that $(1+L)^{-1} * (1 + L) = 1$, where $*$ is the convolution on $\Xf$ and $1 : \Xf \times \Xf \to \{0,1\}$ is the identity function non-vanishing only on the diagonal. Furthermore, it was proved in~\cite{Macchi1975} that the $L$-ensemble is a determinantal point process: 

\begin{proposition}\label{prop:Macchi}
The measure $\mathcal{W}$ defined in \eqref{eq:LEnsemble} is a determinantal point process with correlation kernel $\CK=L(1+L)^{-1} = 1 - (1+L)^{-1}$.
\end{proposition}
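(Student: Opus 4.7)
The plan is to verify Definition~\ref{def:DPP} directly by proving, for any finite $A = \{x_1,\ldots,x_n\} \subset \Xf$, the algebraic identity
\[
\sum_{Y \supset A} \det(L_Y) \;=\; \det(1+L)_{\ell^2(\Xf)} \cdot \det\bigl[\CK(x_i,x_j)\bigr]_{1\le i,j\le n};
\]
once this is established, dividing by $\det(1+L)$ recovers the right-hand side of~\eqref{eq:DPP}. I would work with the second form $\CK = I - (1+L)^{-1}$, since it is the one that drops out naturally from the algebra.

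The first ingredient is the standard multilinear-expansion identity
\[
\det(1+L) \;=\; \sum_{Y \subset \Xf,\, |Y|<\infty} \det(L_Y),
\]
obtained by splitting each row of $1+L$ as $e_i + L_i$: in each term of the resulting expansion, the rows on which the identity piece $e_i$ is chosen force a cofactor reduction that collapses the determinant to the principal $L$-minor on the complementary index set $Y$. This is literally the Fredholm series~\eqref{eq:Fredholm} rewritten as a sum over unordered finite subsets (using that the summand vanishes whenever two $y_k$'s coincide), so no extra analytic content is required beyond what was used in Lemma~\ref{lem:GapProbability}. The same manipulation applied to the modified matrix $M := 1 + L - P_A$, where $P_A$ is the diagonal matrix with $1$'s on $A$ and $0$'s off $A$, suppresses the $e_i$-alternative for rows $i \in A$ and therefore yields the restricted identity
\[
\det(M) \;=\; \sum_{Y \supset A} \det(L_Y).
\]

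The second step is to factor $M = (1+L)\bigl[I - (1+L)^{-1} P_A\bigr]$, which is legitimate because $(1+L)^{-1}$ exists by hypothesis, and to observe that the matrix $N := I - (1+L)^{-1} P_A$ coincides with the identity on every column indexed by $A^c$, while its $A$-columns equal the corresponding columns of $I - (1+L)^{-1} = \CK$. Cofactor-expanding $\det(N)$ along the $A^c$-columns (each of which has a single nonzero entry equal to $1$) then collapses $\det(N)$ to the $n \times n$ principal block $\det\bigl[\CK(x_i,x_j)\bigr]_{1\le i,j\le n}$. Multiplicativity of the determinant under $M = (1+L)N$ then delivers $\det(M) = \det(1+L)\cdot\det(\CK_A)$, which combined with the restricted expansion finishes the proof.

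The only subtlety I foresee is that $\Xf$ may be infinite, so $\det(1+L)$ is a Fredholm determinant rather than a finite-dimensional one. Fortunately the expansion as a sum over finite subsets is precisely~\eqref{eq:Fredholm}, so that step is definitional; and $(1+L)^{-1} P_A$ has finite rank, so $N$ is a finite-rank perturbation of the identity and $\det(N)$ is an honest $n \times n$ determinant, while the multiplicativity $\det(M) = \det(1+L)\det(N)$ is the standard Fredholm statement for trace-class (in fact finite-rank) perturbations. Thus the algebraic identity extends to the infinite-dimensional setting without additional analytic input.
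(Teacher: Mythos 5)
Your argument is correct, and it fills a gap that the paper leaves open: the paper states Proposition~\ref{prop:Macchi} with a citation to Macchi's original work and does not include a proof, so there is nothing to compare against line by line. The strategy you adopt is the standard one, and each step checks out. The expansion $\det(1+L)=\sum_{Y\subset\Xf}\det(L_Y)$ is exactly the Fredholm series \eqref{eq:Fredholm} reorganized over unordered finite subsets (the diagonal-vanishing of the summand and the $1/n!$ cancel the ordering, as you note). Replacing $1+L$ by $M=1+L-P_A$ is the right modification: for rows $i\in A$ the identity summand is suppressed, so the same multilinear expansion forces $A\subset Y$ and yields $\det(M)=\sum_{Y\supset A}\det(L_Y)$. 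The factorization $M=(1+L)N$ with $N=I-(1+L)^{-1}P_A$ is valid precisely because $\det(1+L)\neq 0$, and $N$ agrees with the identity on $A^c$-columns and with $\CK=I-(1+L)^{-1}$ on $A$-columns, so cofactor expansion along the $A^c$-columns collapses $\det(N)$ to $\det\bigl[\CK(x_i,x_j)\bigr]_{i,j\in A}$. Multiplying and dividing by $\det(1+L)$ recovers \eqref{eq:DPP}. Your closing remarks on the infinite-dimensional case are also sound: $P_A$ and $(1+L)^{-1}P_A$ are finite rank, so $N$ is a finite-rank perturbation of the identity, $\det(N)$ reduces to an $n\times n$ determinant, and the multiplicativity $\det\bigl((1+L)N\bigr)=\det(1+L)\det(N)$ is the standard identity for trace-class (here finite-rank) perturbations; likewise $L-P_A$ remains trace class whenever $L$ is, so $\det(M)$ is well defined as a Fredholm determinant. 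In short: a complete, self-contained proof of the cited result, consistent with what the paper implicitly relies upon.
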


\begin{example}[Non-intersecting random walks]
It is not difficult to see that the distribution of the mid-positions $\{X_i(t) : 1 \leq i \leq n\}$ of the random walks from Example~\ref{ex:RWs} is the $L$-ensemble with the function 
\[
 	L(u, v) = \sum_{i=1}^N p_t(u, x_i) p_t(x_i, v).
\]
The correlation kernel $\CK$ can be computed from Proposition~\ref{prop:Macchi} and it coincides with \eqref{eq:RWs}.
\end{example}

\begin{exercise}
 Perform the computations from the previous example.
\end{exercise}

\subsection{Conditional $\mathbf L$-ensembles.}

An $L$-ensemble can be conditioned by fixing certain values of the determinantal process. More precisely, consider a nonempty subset $\Zf\subset\Xf$ and a given $L$-ensemble on $\Xf$. We define a measure on $2^\Zf$, called \emph{conditional $L$-ensemble}, in the following way:
\begin{equation}\label{eq:LCond}
\mathcal{W}(Y)=\frac{\det(L_{Y\cup\Zf^c})}{\det(1_\Zf+L)},
\end{equation}
for any $Y \subset \Zf$, where $1_\Zf(x,y) = 1$ if and only if $x = y \in \Zf$, and $1_\Zf(x,y) = 0$ otherwise.

\begin{exercise}
 Prove that the measure $\mathcal{W}$ defined in \eqref{eq:LCond} integrates to $1$.
\end{exercise}

\noindent Roughly speaking the definition \eqref{eq:LCond} means that we restrict the $L$-ensemble by the condition that the values $\Zf^c$ are fixed. The following result is a generalisation of Proposition~\ref{prop:Macchi} and its proof be found in~\cite[Prop.~1.2]{borodinRains}:

\begin{proposition}\label{prop:ConditionalLensembles}
The conditional $L$-ensemble is a determinantal point process on $\Zf$ with correlation kernel
\begin{equation}\label{eq:K_LCond}
 \CK=1_\Zf-(1_\Zf+L)^{-1}\big|_{\Zf\times\Zf},
\end{equation}
where $F \big|_{\Zf\times\Zf}$ means restriction of the function $F$ to the set $\Zf\times\Zf$.
\end{proposition}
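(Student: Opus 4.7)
The plan is to reduce the statement to Proposition~\ref{prop:Macchi} by exhibiting the conditional $L$-ensemble on $\Zf$ as an ordinary $L$-ensemble on $\Zf$ with a suitable effective kernel $\widetilde L$. The key algebraic tool is the block determinant identity
\[
\det\begin{pmatrix} A & B \\ C & D \end{pmatrix} = \det(D)\,\det\bigl(A - BD^{-1}C\bigr),
\]
which I would apply to both the numerator and the denominator of \eqref{eq:LCond} so that the factor $\det(L_{\Zf^c\Zf^c})$ cancels.

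Concretely, define
\[
\widetilde L(x,y) = L(x,y) - \sum_{u,v\in\Zf^c} L(x,u)\bigl(L_{\Zf^c\Zf^c}\bigr)^{-1}(u,v)\,L(v,y), \qquad x,y\in\Zf.
\]
Viewing $L_{Y\cup\Zf^c}$ with blocks indexed by $Y$ and $\Zf^c$ yields $\det(L_{Y\cup\Zf^c}) = \det(L_{\Zf^c\Zf^c})\det(\widetilde L_Y)$. Viewing $1_\Zf+L$ with blocks indexed by $\Zf$ and $\Zf^c$, the top-left block is $I_\Zf + L_{\Zf\Zf}$ and the Schur complement of $L_{\Zf^c\Zf^c}$ is exactly $I_\Zf + \widetilde L$, so $\det(1_\Zf+L) = \det(L_{\Zf^c\Zf^c})\det(1+\widetilde L)_{\ell^2(\Zf)}$. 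Dividing, the common factor cancels and
\[
\mathcal{W}(Y) = \frac{\det(\widetilde L_Y)}{\det(1+\widetilde L)_{\ell^2(\Zf)}},
\]
which is an $L$-ensemble on $\Zf$ with kernel $\widetilde L$. Proposition~\ref{prop:Macchi} then identifies $\mathcal{W}$ as a determinantal point process on $\Zf$ with correlation kernel $\widetilde{\CK} = 1 - (1+\widetilde L)^{-1}$.

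To match $\widetilde{\CK}$ with \eqref{eq:K_LCond}, I would apply the standard block inversion formula to $1_\Zf+L$: its $\Zf\times\Zf$ block inverse equals the inverse of the Schur complement, i.e.\ $(1_\Zf+L)^{-1}\big|_{\Zf\times\Zf} = (1+\widetilde L)^{-1}$. Hence $\widetilde{\CK} = 1_\Zf - (1_\Zf+L)^{-1}\big|_{\Zf\times\Zf}$, which is precisely the claim.

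The main obstacle is analytic rather than algebraic. When $\Zf^c$ is infinite, $L_{\Zf^c\Zf^c}$ is an operator on $\ell^2(\Zf^c)$ and the Schur complement identity, the block inversion formula, and the cancellation of $\det(L_{\Zf^c\Zf^c})$ must all be lifted to Fredholm determinants and trace-class operators. This requires the implicit operator-theoretic hypotheses under which the relevant Fredholm determinants are defined and $L_{\Zf^c\Zf^c}$ (or a suitable variant of it) is invertible. Under the standing assumption $\det(1_\Zf+L)\neq 0$, these steps can be justified by approximating with finite-rank truncations or via Plemelj--Smithies expansions, which is the standard route taken in \cite{borodinRains}.
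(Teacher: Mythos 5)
Your algebraic reduction to Proposition~\ref{prop:Macchi}---factor $\det(L_{\Zf^c\Zf^c})$ out of both numerator and denominator of \eqref{eq:LCond} via Schur complements and identify the quotient as an $L$-ensemble on $\Zf$ with effective kernel $\widetilde L$---is clean and, when it applies, gives a correct proof. But it hinges on the invertibility of $L_{\Zf^c\Zf^c}$, which is \emph{not} among the hypotheses: the only standing assumption is $\det(1_\Zf+L)\neq 0$. You relegate this to an ``implicit operator-theoretic hypothesis'' to be addressed in the passage to Fredholm determinants, but the problem is already present in the finite case, and it is not hypothetical: in the $L$-function \eqref{eq:MatrixL} that this proposition is ultimately applied to, the block $L_{\Zf^c\Zf^c}$ (the first $N$ rows and columns, indexed by $\{1,\ldots,N\}$) is \emph{identically zero}. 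A minimal example: $\Xf=\{1,2\}$, $\Zf=\{2\}$, $L=\left(\begin{smallmatrix}0&1\\1&0\end{smallmatrix}\right)$ has $\det(1_\Zf+L)=-1\neq 0$ while $L_{\Zf^c\Zf^c}=0$. In such cases $\widetilde L$ is undefined, the factorization $\det(L_{Y\cup\Zf^c})=\det(L_{\Zf^c\Zf^c})\det(\widetilde L_Y)$ degenerates to $0=0\cdot(\text{undefined})$, and the block-inversion formula you invoke (top-left block of the inverse equals the inverse of the Schur complement of $D$) also presupposes $D$ invertible.

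The gap is closable, and the fix is worth making explicit. Fix $x_1,\ldots,x_n\in\Zf$. The identity you need to verify, namely $\sum_{Y\subset\Zf,\,\{x_i\}\subset Y}\mathcal{W}(Y)=\det[\CK(x_i,x_j)]$ with $\CK=1_\Zf-(1_\Zf+L)^{-1}|_{\Zf\times\Zf}$, is an equality of rational functions of the (finitely many relevant) entries of $L$ on the open set $\{\det(1_\Zf+L)\neq 0\}$. Your Schur-complement argument establishes it on the dense open subset where additionally $\det L_{\Zf^c\Zf^c}\neq 0$; it therefore holds on all of $\{\det(1_\Zf+L)\neq 0\}$ by continuity. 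Alternatively, one can avoid $\widetilde L$ altogether and prove the correlation identity by direct expansion of $\det(L_{Y\cup\Zf^c})$, in the spirit of the usual proof of Proposition~\ref{prop:Macchi}, which is closer to what is done in \cite[Prop.~1.2]{borodinRains}; note the paper cites that reference rather than giving a proof. One more small point: your closing paragraph identifies infinite $\Zf^c$ as the main analytic obstacle, but in the present paper $\Zf^c=\{1,\ldots,N\}$ is finite and only $\Zf$ is infinite, so the Fredholm-theoretic concerns you raise are secondary here to the (finite-dimensional) invertibility issue above.
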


\section{Biorthogonal representation of the correlation kernel}

The formula \eqref{eq:Green} is not suitable for asymptotic analysis of TASEP, because the size of the matrix goes to $\infty$ as the number of particles $N$ increases. To overcome this problem, the authors of~\cite{borFerPrahSasam} (and in its preliminary version~\cite{sasamoto}) wrote it as a Fredholm determinant, which can be then subject to asymptotic analysis.

In order to state this result, we need to make some definitions. For an integer $M \geq 1$, a fixed vector $\vec a \in\R^M$ and indices $n_1<\ldots<n_M$ we introduce the projections
\begin{equation}\label{eq:defChis}
\chi_{\vec a}(n_j,x)=\1{x> a_j}, \hspace{1cm} \bar\chi_{\vec a}(n_j,x)=\1{x\leq a_j},
\end{equation}
acting on $x \in \Z$, which also regard as multiplication operators acting on $\ell^2 \bigl(\{n_1,\ldots,n_M\}\times\Z\bigr)$.
We will use the same notation if $a$ is a scalar, writing 
\begin{equation}\label{eq:defChisScalar}
\chi_a(x)=1-\bar\chi_a(x)=\1{x>a}.
\end{equation}
Then from~\cite{borFerPrahSasam} we have the following result:

\begin{theorem}\label{thm:BFPS}
Suppose that TASEP starts with particles labeled $X_0(1)>X_0(2)>\ldots >X_0(N)$ and let $1\leq n_1<n_2<\dotsm<n_M\leq N$ and $\vec a \in\R^M$ for some $1\leq M \leq N$. Then for $t>0$ we have
\begin{equation}\label{eq:extKernelProbBFPS}
	\pp \bigl(X_t(n_j) > a_j,~j=1,\ldots,M\bigr)=\det \bigl(I-\bar\chi_{\vec a}K_t\bar\chi_{\vec a}\bigr)_{\ell^2(\{n_1,\ldots,n_M\}\times\Z)}
\end{equation}
where the kernel $K_t$ is given by
\begin{equation}\label{eq:Kt}
K_t(n_i,x_i;n_j,x_j)=-\phi^{n_j - n_i}(x_i,x_j)\1{n_i<n_j}+\sum_{k=1}^{n_j}\Psi^{n_i}_{n_i-k}(x_i)\Phi^{n_j}_{n_j-k}(x_j),
\end{equation}
and where $\phi(x,y)=\1{x>y}$ and
\begin{equation}\label{eq:defPsi}
\Psi^n_k(x)=\frac1{2\pi\I}\oint_{\Gamma_0}dw\,\left(\frac{1-w}{w}\right)^{k} \frac{e^{t(w-1)}}{w^{x - X_0(n-k) + 1}}.
\end{equation}
Here, $\Gamma_0$ is any simple loop, oriented counterclockwise, which includes the pole at $w=0$ but does not include $w=1$.
The functions $\Phi_k^{n}$, $0 \leq k < n$, are defined implicitly by the following two properties:
\begin{enumerate}[label={\normalfont (\arabic{*})}]
\item the biorthogonality relation, for $0 \leq k, \ell < n$,
\[
 \sum_{x\in\Z}\Psi_k^{n}(x)\Phi_\ell^{n}(x)=\1{k=\ell},
\]
\label{ortho}
\item   the spanning property 
\[
 {\mathrm{span}} \bigl\{\Phi^n_k : 0 \leq k < n\bigr\} = {\mathrm{span}}\bigl\{x^k : 0 \leq k < n\bigr\},
\]
which in particular implies that the function $\Phi^n_k$ is a polynomial of degree at most $n-1$.
\end{enumerate} 
\end{theorem}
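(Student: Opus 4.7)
The plan is to view Sch\"{u}tz's formula \eqref{eq:Green} as the partition function of a signed non-intersecting line ensemble and to extract the joint law of $(X_t(n_j))_{j=1}^M$ as a marginal of the resulting determinantal point process on $\{1,\ldots,N\}\times\Z$. Once this is done, the Fredholm determinant identity \eqref{eq:extKernelProbBFPS} is just the gap probability statement of Lemma~\ref{lem:GapProbability} applied to the complement of $\{x > a_j\}$, and the only real work is to put the correlation kernel into the form \eqref{eq:Kt}.

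First I would realize the full transition probability $P^{(N)}_t(\vec y,\vec x)$ as a conditional $L$-ensemble, in the sense of \eqref{eq:LCond}, by introducing auxiliary lower-level variables $\{x^n_k : 1 \le k \le n \le N-1\}$ so that the configuration becomes a triangular array whose top row $(x^N_1,\ldots,x^N_N)$ equals $\vec x$. The entries $F_{i-j}$ of Sch\"{u}tz's determinant factor as sums of products through their contour-integral representation \eqref{eq:defF}, and Cauchy--Binet turns this factorization into the weight of non-intersecting paths whose one-step kernels are built from the $F_n$'s. Summing over the auxiliary variables and applying Eynard--Mehta (equivalently, Proposition~\ref{prop:ConditionalLensembles}) yields a correlation kernel of the generic finite-rank-plus-convolution shape
\[
K_t(n_i,x_i;n_j,x_j)=-\phi^{n_j-n_i}(x_i,x_j)\mathbf{1}_{n_i<n_j}+\sum_{k,\ell=1}^{n_i \wedge n_j}\Psi^{n_i}_{n_i-k}(x_i)\,\bigl[G^{-1}\bigr]_{k,\ell}\,\widetilde\Psi^{n_j}_{n_j-\ell}(x_j),
\]
where $G$ is the Gram matrix that records the inner products of the two ``halves'' of the ensemble (and depends on the initial data $X_0$). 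Rearranging the contour integral \eqref{eq:defF} and shrinking the $w$-contour around $0$ identifies the left factor precisely with the $\Psi^{n}_k$ of \eqref{eq:defPsi}, while the right factor $\widetilde\Psi$ comes out as a polynomial of degree $n-k-1$ in $x$, since the corresponding contour can be expanded to infinity and picks up only a single finite residue.

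Second, I would absorb the inverse Gram matrix into the right factor by setting
\[
\Phi^n_\ell(x) = \sum_{k=1}^{n}\bigl[G^{-1}\bigr]_{k,\ell}\,\widetilde\Psi^n_{n-k}(x),
\]
which brings the kernel into the shape \eqref{eq:Kt}. The biorthogonality relation \ref{ortho} is then immediate from $G\,G^{-1}=I$, provided one has correctly checked that $G_{k,\ell}=\sum_{x\in\Z}\Psi^n_{n-k}(x)\widetilde\Psi^n_{n-\ell}(x)$. The spanning property follows because the $\widetilde\Psi^n_{n-k}$ are polynomials of degrees $0,1,\ldots,n-1$, and $G^{-1}$ is an invertible change of basis; uniqueness of $\Phi^n_k$ is automatic from the two defining properties.

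The step I expect to be the main obstacle is the Eynard--Mehta setup itself: the $F_n$'s are not transition kernels of any single Markov chain, so the auxiliary triangular array must be introduced in just the right way for the Cauchy--Binet expansion of \eqref{eq:Green} to appear as a genuine non-intersecting-path partition function. This is the point at which signedness of the measure $\mathcal{W}$ of Definition~\ref{def:DPP} is crucial, because intermediate ``transition weights'' can be negative and the standard probabilistic Karlin--McGregor argument of Exercise~\ref{ex:KMcG} does not apply directly. One must therefore verify by hand that the formal manipulations are justified and that $\det(1_{\mathfrak{Z}} + L)\neq 0$ so that Proposition~\ref{prop:ConditionalLensembles} can be invoked. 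Once this bookkeeping is settled, the remaining work is contour deformation to identify $\Psi^n_k$ and linear algebra to construct $\Phi^n_k$.
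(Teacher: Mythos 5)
Your overall route coincides with the paper's: rewrite Sch\"utz's determinant as a sum over Gelfand--Tsetlin arrays (Proposition~\ref{p:transitions}), realize the resulting signed weight $\mathcal{W}_N$ as a conditional $L$-ensemble via an explicit block matrix $L$, invoke Proposition~\ref{prop:ConditionalLensembles}, and then biorthogonalize. The genuine gap is precisely in the step you call ``immediate''. After inverting the block matrix, the correlation kernel contains the inverse of a \emph{single} $N\times N$ matrix $M$ (the paper's \eqref{eq:MatrixM}), not of a separate level-$n$ Gram matrix for each $n$. Defining $\Phi^n_\ell=\sum_k[G^{-1}]_{k,\ell}\widetilde\Psi^n_{n-k}$ with $G$ an $n\times n$ Gram matrix does make property (1) of the theorem trivial, but it does not show that these $\Phi^n_\ell$ are the functions actually appearing in the Eynard--Mehta kernel, which are built from $M^{-1}$ and therefore a priori depend on the global level $N$. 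The missing content is exactly \eqref{eq:AOrth}: one has to prove $\sum_{k=1}^m[M^{-1}]_{\ell,k}[A_m]_{k,i}=\delta_{\ell,i}$, and the paper does so by first checking that $[A_m]_{k,i}=[A_N]_{k,i}$ for $1\le k,i\le m$ (a consequence of the recursion $\phi*\Psi^{n}_{n-k}=\Psi^{n-1}_{n-1-k}$) and then exploiting that $A_N=M$ is upper triangular with unit diagonal. Without this, there is no reason the $N\times N$ inverse should restrict to intrinsic level-$m$ data, and the theorem's claim that the kernel is expressed through level-local $\Phi^{n_j}$'s would not follow.

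You flag the Eynard--Mehta setup with signed weights as the main obstacle; in the paper this part is actually routine once the block $L$-matrix \eqref{eq:MatrixL} is written down explicitly, since $(1_\Zf+L)^{-1}$ can be computed in closed form and the required invertibility of $M$ comes for free from its upper-triangular structure with $1$'s on the diagonal. A minor further point: in your intermediate kernel the $k,\ell$ sum should run to $n_j$, not to $n_i\wedge n_j$; the truncation comes from the $E_{k-1}$ blocks vanishing for $k>n_j$, and for $n_j>n_i$ one genuinely needs $\Psi^{n_i}_{n_i-k}$ with $n_i-k<0$, which is well defined by \eqref{eq:defPsi}.
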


\begin{remark}
 The problem with this result is that the functions $\Phi^n_k$ are not given explicitly. In special cases of periodic and step initial data exact integral expressions of these functions had been found in \cite{borFerPrahSasam}, \cite{ferrariMatr} and \cite{bfp} (the latter is for the discrete time TASEP, which can be easily adapted for continuous time). More precisely, for the step initial data $X_0(i)=-i$, $i\geq1$, we have
\[
	\Phi^n_k(x) = \frac{1}{2 \pi i} \oint_{\Gamma_0} dv\, \frac{(1-v)^{x+n}}{v^{k+1}} e^{t v},
\]
and in the case of periodic initial data $X_0(i)=-d i$, $i\geq1$, with $d \geq 2$,
\[
	\Phi^n_k(x) = \frac{1}{2 \pi \I} \oint_{\Gamma_{0}} dv\, \frac{(1 - d v) (2(1-v))^{x + dn - 1}}{v (2^d (1-v)^{d-1} v)^{k}} e^{t v}.
\]
 The key new result in~\cite{KPZ} is an  expression for the functions $\Phi^n_k$, and therefore the kernel $K_t$, for \emph{arbitrary} initial data.
\end{remark}

\begin{remark}
 The functions $F$ from \eqref{eq:defF} and $\Psi$ from \eqref{eq:defPsi} are obviously related by the identity
\begin{equation}\label{eq:FandPhi}
\Psi^N_{k}(x) =  (-1)^k F_{-k} (x - y_{N - k }, t),
\end{equation}
for $0 \leq k \leq N$, so that all properties of $F$ can be translated to $\Psi$. Moreover, one can see that if $n \leq 0$, then the function inside the integral in \eqref{eq:defF} has the only pole at $w=0$, which yields 
\[
F_{n+1}(x,t)=-\sum_{y < x} F_n(y,t).
\]
Writing this relation in terms of the functions $\Psi^N_{k}$, we get
\begin{equation}\label{eq:PsiRecursion}
\Psi^{N}_{N - k}(x) = \sum_{y < x} \Psi^{N+1}_{N +1 - k}(x).
\end{equation}
\end{remark}

In the next section we provide a proof of this result following~\cite{borFerPrahSasam}. The main idea is to rewrite the problem in terms of non-intersecting random walks (or \emph{vicious random walks}) whose configurations form a \emph{Gelfand-Tsetlin pattern}\footnote{This property relates TASEP to random matrices, see~\cite{ferrariMatr}.}. The distribution of these random walks forms a determinantal point process whose correlation kernel is \eqref{eq:Kt}.

\subsection{Non-intersecting random walks.}
\label{eq:random_walks}

Our aim in this section is to rewrite Sch\"{u}tz's formula \eqref{eq:Green} in a form involving transition probabilities of non-intersecting random walks.

We start with rewriting the transition probabilities \eqref{eq:Green} in the following way:

\begin{proposition}\label{p:transitions}
For $\vec x, \vec y \in \Omega_N$, one has the following identity:
\begin{equation}\label{eq:transitions}
	\pp \big(X_t = \vec x\, |\, X_0 = \vec y\big) = \sum_{\substack{\bz \in \GT_N: \\ z_1^{n} = x_n}} \det \Big[\Psi^N_{N-j}\big(z_{i}^N\big)\Big]_{1 \leq i,j \leq N},
\end{equation}
where the functions $\Psi$ are defined in \eqref{eq:defPsi}, where the sum runs over the domain $\GT_N$ of triangular arrays given by a Gelfand-Tsetlin pattern 
\[
	\GT_N = \Big\{\bz = (z_i^n)_{n, i} \colon z_i^n \in \Z,\; 1 \leq i \leq n < N,\; z_i^{n+1} < z_i^{n} \leq z_{i+1}^{n+1} \Big\},
\]
with fixed values $z_1^{n} = x_n$ for all $n=1, \cdots, N$ (see Figure~\ref{fig:GT} for a graphical representation of $\GT_4$).
\end{proposition}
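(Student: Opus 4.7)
The plan is to reduce the identity to Schütz's formula \eqref{eq:Green} by summing out the layers of the Gelfand-Tsetlin pattern from the top downward, using the telescoping properties of $F_n$ recorded in \eqref{eq:F_props}.

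First, I would translate both sides into a common language. Using the relation $\Psi^N_{N-j}(z) = (-1)^{N-j} F_{j-N}(z-y_j, t)$ from \eqref{eq:FandPhi} and pulling the column-dependent signs out of the determinant gives
\[
\det\bigl[\Psi^N_{N-j}(z^N_i)\bigr]_{i,j} \;=\; (-1)^{N(N-1)/2}\det\bigl[F_{j-N}(z^N_i - y_j, t)\bigr]_{i,j}.
\]
Combined with Schütz's formula on the left, the task reduces to proving
\[
\det\bigl[F_{i-j}(x_{N+1-i} - y_{N+1-j}, t)\bigr]_{i,j} \;=\; (-1)^{N(N-1)/2}\!\!\sum_{\bz \in \GT_N,\, z^n_1 = x_n}\!\!\det\bigl[F_{j-N}(z^N_i - y_j, t)\bigr]_{i,j}.
\]

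Second, I would evaluate the right-hand side by summing the layers of the GT pattern starting with the top. Given that $z^{n-1}_1 = x_{n-1}$ and the layer below are fixed, the free variables in layer $n$ (for $i \geq 2$) are constrained by the interlacing to $z^n_i \in [z^{n-1}_{i-1}, z^{n-1}_i - 1]$ for $2 \le i \le n-1$ and $z^n_n \ge z^{n-1}_{n-1}$ (with $z^n_1 = x_n$ fixed). The telescoping identity
\[
\sum_{z = a}^{b-1} F_k(z - y_j) \;=\; F_{k+1}(a - y_j) - F_{k+1}(b - y_j),
\]
which is immediate from $F_k = -\nabla^+ F_{k+1}$, reduces each bounded row-sum to a difference of two $F_{k+1}$ evaluations at the interlacing boundaries. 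The unbounded last sum ($z^n_n \ge z^{n-1}_{n-1}$) yields $F_{k+1}(z^{n-1}_{n-1} - y_j)$ alone, provided $F_{k+1}(x,t) \to 0$ as $x \to \infty$; this can be verified by deforming the contour $\Gamma_{0,1}$ outward to a large circle $|w|=R$, under which the integrand in \eqref{eq:defF} is $O(R^{-(x-k)})$.

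Third, after each layer's telescoping, I would apply elementary row operations replacing each summed row $i$ by the cumulative sum of rows $i, i+1, \ldots, N$; the paired $F_{k+1}(b - y_j)$ and $F_{k+1}(a - y_j)$ terms across consecutive rows then cancel, leaving each row as a single $F_{k+1}$ evaluation. With $z^{n-1}_1 = x_{n-1}$ fixed, the top summed row settles to $F_{k+1}(x_{n-1} - y_j)$. Inductively, after processing all layers $n = N, N-1, \ldots, 2$, the matrix becomes
\[
\det\bigl[F_{j - N + i - 1}(x_{N+1-i} - y_j, t)\bigr]_{1\le i,j \le N},
\]
and a column reversal ($j \mapsto N+1-j$, contributing $(-1)^{N(N-1)/2}$) transforms this into $(-1)^{N(N-1)/2}\det[F_{i-j}(x_{N+1-i} - y_{N+1-j}, t)]$, exactly matching the goal.

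The main obstacle will be the careful bookkeeping through the iterated row sums and row operations: verifying at each stage that the partially processed matrix splits into a growing block of ``fixed'' rows of the form $F_{k}(x_{N+1-i} - y_j)$ together with a shrinking block of rows $F_{k'}(z^{n-1}_{i-1} - y_j)$ still to be telescoped. I would sanity-check the case $N = 2$ explicitly (single free variable $z^2_2 \ge x_1$, telescope $\sum_{z \ge x_1} F_0(z - y_j) = F_1(x_1 - y_j)$, reproducing the bottom row of Schütz's $2 \times 2$ determinant after a column swap) before committing to the general induction.
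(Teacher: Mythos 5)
Your approach is correct, but it runs in the opposite direction from the paper's proof, and the difference is worth noting. The paper starts from Sch\"utz's formula and \emph{unfolds} it into the Gelfand--Tsetlin sum: each row of the determinant is expanded using $F_{n+1}(x,t)=\sum_{y\geq x}F_n(y,t)$, which initially produces a sum over a larger domain than $\GT_N$, and the antisymmetry of the determinant in pairs of expansion variables (e.g. $z_2^3$ and $z_3^3$) is then invoked to show that the contribution of the ``symmetric part'' of the domain vanishes, leaving exactly the interlacing constraints. You instead \emph{collapse} the Gelfand--Tsetlin sum down to Sch\"utz's determinant: because the interlacing constraints factor row by row (given the layer below), you telescope each bounded row-sum to a difference $F_{k+1}(a)-F_{k+1}(b)$ and the single unbounded sum to $F_{k+1}(a)$, then use cumulative row operations to cancel the paired boundary terms, peeling off one layer at a time. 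This bypasses the antisymmetry argument entirely at the cost of having to guess the Gelfand--Tsetlin decomposition in advance (which is fine, since the proposition states it). The sign bookkeeping, the terminal matrix $\det\big[F_{j-N+i-1}(x_{N+1-i}-y_j,t)\big]$, and the final column reversal contributing $(-1)^{N(N-1)/2}$ all check out, as does the $N=2$ sanity check.

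One technical point needs repair: your justification that $F_k(x,t)\to0$ as $x\to+\infty$ by pushing $\Gamma_{0,1}$ to a large circle $|w|=R$ does not work, because of the factor $e^{t(w-1)}$ in the integrand, which grows exponentially on the right half of such a circle. For the indices you actually use ($k\leq0$, so $(1-w)^{-k}$ is a polynomial and the only pole is at $w=0$), the cleaner argument is to compute the residue at $w=0$ directly: $F_k(x,t)$ is (up to sign) the coefficient of $w^{x-k}$ in $(1-w)^{-k}e^{t(w-1)}$, which is a finite combination of terms $e^{-t}t^{x-k-j}/(x-k-j)!$ and hence decays superexponentially in $x$. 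This is also the justification implicitly needed for the paper's use of $F_{n+1}(x,t)=\sum_{y\geq x}F_n(y,t)$.
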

{
\def\Num{4}
\def\StartX{4}
\def\StartY{1}
\begin{figure}
\centering
\begin{tikzpicture}
	\foreach \x [evaluate=\x as \evalX using int(\Num + 1 - \x)] in {1,...,\Num} {
    		\foreach \y [evaluate=\y as \evalY using int(\Num + 1 - \y)] in {\x,...,\Num} {
			\ifthenelse{\y=\Num}{} {
				\draw (\StartX - \y + 1 + 2 * \x,\StartY + \y - 1) edge (\StartX - \y + 1 + 2 * \x - 1,\StartY + \y - 1 + 1);
				\node at (\StartX - \y + 1 + 2 * \x - 1.2, \StartY + \y - 1 + 0.5) {\begin{rotate}{-45}$<$\end{rotate}};

				\ifthenelse{\x=\Num}{} {
					\draw  (\StartX - \y + 1 + 2 * \x, \StartY + \y - 1) edge (\StartX - \y + 1 + 2 * \x + 1, \StartY + \y - 1 + 1);
					\node at (\StartX - \y + 1 + 2 * \x, \StartY + \y - 1 + 0.4) {\begin{rotate}{45}$\leq$\end{rotate}};
				}
			}
			\ifthenelse{\x=1} {
				\fill[black] (\StartX - \y + 1 + 2 * \x,\y) circle (3pt) node [left = 0.2cm] {$x_\y = z^\y_\x$};
			}{
				\fill[black] (\StartX - \y + 1 + 2 * \x,\y) circle (3pt) node [left = 0.2cm] {$z^\y_\x$};
			}
		}
	}
\end{tikzpicture}
\caption{The Gelfand-Tsetlin pattern $\GT_4$ with fixed values $z^n_1 = x_n$.}
\label{fig:GT}
\end{figure}
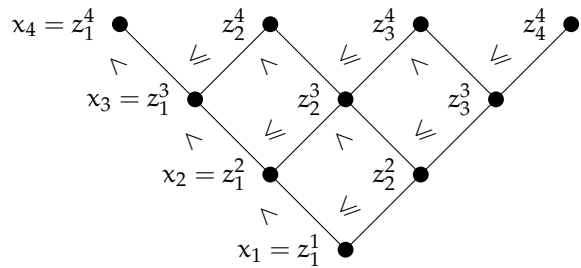
}

\begin{proof}
This decomposition is obtained using only the identity
\begin{equation}\label{eq:F_sum}
F_{n+1}(x,t)=\sum_{y \geq x} F_n(y,t),
\end{equation}
which is the integrated form of the second equality in \eqref{eq:F_props} combined with the fact that the convergence $\lim_{y\to+\infty}F_n(y,t)=0$ holds fast enough. From Sch\"{u}tz's formula \eqref{eq:Green} we have
\begin{equation}\label{eq:P_det}\arraycolsep=5pt\renewcommand\arraystretch{1.2}
\pp \bigl(X_t = \vec x \,|\, X_0 = \vec y\bigr) = 
\det\left[\begin{array}{ccc} F_0(z_1^N-y_N,t)&
\cdots & F_{-N+1}(z_1^N-y_1,t) \\ \vdots & \ddots & \vdots \\
F_{N-1}(z_1^1-y_N,t)&\cdots & F_0(z_1^1-y_1,t)\end{array}\right],
\end{equation}
where we renamed the variables $z^n_1 = x_n$. Applying the property \eqref{eq:F_sum} twice to each entry of the last row we can rewrite it as
\begin{align}\arraycolsep=5pt\renewcommand\arraystretch{1.2}
	\Big[\begin{array}{ccc} F_{N-1}(z_1^1-y_N,t)&\;\cdots\; & F_0(z_1^1-y_1,t)\end{array}\Big] &= \sum_{z_2^2 \geq z_1^1} \Big[\begin{array}{ccc} F_{N-2}(z_2^2-y_N,t)&\;\cdots\; & F_{-1}(z_2^2-y_1,t)\end{array}\Big] \nonumber\\
	&= \sum_{z_2^2 \geq z_1^1}\sum_{z_3^3\geq z_2^2} \Big[\begin{array}{ccc} F_{N-3}(z_3^3-y_N,t)&\;\cdots\; & F_{-2}(z_3^3-y_1,t)\end{array}\Big].\label{eq:last_row}
\end{align}
Applying furthermore the identity \eqref{eq:F_sum} to the penultimate row in \eqref{eq:P_det} we obtain
\[
	\sum_{z_2^3 \geq z_1^2} \Big[\begin{array}{ccc} F_{N-3}(z_2^3-y_N,t)&\;\cdots\; & F_{-2}(z_2^3-y_1,t)\end{array}\Big].
\]
Combining these two identities with multilinearity of determinant, the right-hand side of \eqref{eq:P_det} equals
\[\arraycolsep=5pt\renewcommand\arraystretch{1.2}
\sum_{z_2^2 \geq z_1^1}\sum_{z_3^3\geq z_2^2}\sum_{z_2^3
\geq z_1^2} \det\left[\begin{array}{ccc} F_0(z_1^N-y_N,t)& \cdots &
F_{-N+1}(z_1^N-y_1,t) \\ \vdots & \ddots & \vdots \\
F_{N-3}(z_1^3-y_N,t)&\cdots & F_{-2}(z_1^3-y_1,t) \\
F_{N-3}(z_2^3-y_N,t)&\cdots & F_{-2}(z_2^3-y_1,t) \\
F_{N-3}(z_3^3-y_N,t)&\cdots & F_{-2}(z_3^3-y_1,t) \end{array}\right].
\]
The determinant is antisymmetric in the variables $z_2^3$ and $z_3^3$ (i.e. it changes sign if we swap $z_2^3$ and $z_3^3$), therefore the contribution of the symmetric part of the summation domain $\bigl\{z_3^3 \geq z_2^2\bigr\} \cap \bigl\{z_2^3 \geq z_1^2\bigr\}$ is zero. Since the symmetric part of this domain is $\big\{z_3^3 \geq z_2^2\big\} \cap \big\{z_2^3 \geq z_2^2\big\}$, we are left with the sum over $\big\{z_3^3 \geq z_2^2\big\} \cap \big\{z_2^3 \in[z_1^2,z_2^2)\big\}$. We iterate the same procedure for $k=3,\ldots,N-1$, applying \eqref{eq:F_sum} to the last $k$ rows and removing the sums over symmetric domains, and we get the formula
\[
\pp \bigl(X_t = \vec x \,|\, X_0 = \vec y\bigr) = \sum_{\substack{\bz \in \GT_N: \\ z_1^{n} = x_n}} \det \Big[F_{1-j} \big(z_{i}^N - y_{N + 1 -j},t\big)\Big]_{1 \leq i,j \leq N}.
\]
Now, we can use the identity \eqref{eq:FandPhi} to get
\begin{align*}
 \det \Big[F_{1-j} \big(z_{i}^N - y_{N + 1 -j},t\big)\Big]_{1 \leq i,j \leq N} &= \det \Big[(-1)^{j-1} \Psi^N_{j-1} \big(z_{i}^N\big)\Big]_{1 \leq i,j \leq N}\\
 &= (-1)^{(1 + 2 + \cdots + N) - N} \det \Big[\Psi^N_{j-1}\big(z_{i}^N\big)\Big]_{1 \leq i,j \leq N},
\end{align*}
and we change the order of the columns of the matrix inside the determinant
\[
 \det \Big[\Psi^N_{j-1}\big(z_{i}^N\big)\Big]_{1 \leq i,j \leq N} = (-1)^{\lfloor N/2 \rfloor} \det \Big[\Psi^N_{N-j}\big(z_{i}^N\big)\Big]_{1 \leq i,j \leq N}.
\]
It is not difficult to see that $(1 + 2 + \cdots + N) - N + \lfloor N/2 \rfloor$ is an even integer so that the power of $-1$ equals $1$. Hence, combining these identities we get
\[
  \det \Big[F_{1-j}\big(z_{i}^N - y_{N + 1 -j},t\big)\Big]_{1 \leq i,j \leq N} = \det \Big[\Psi^N_{N-j}\big(z_{i}^N\big)\Big]_{1 \leq i,j \leq N},
\]
which gives exactly our claim \eqref{eq:transitions}.
\end{proof}

The weight of a configuration $\bz \in \GT_N$ in \eqref{eq:transitions} can be written as
\begin{equation}\label{eq:pointMeasure}
\mathcal{W}_N(\bz) = \left(\prod_{n=1}^N \det \Big[\phi \big(z_i^{n-1}, z_j^{n}\big)\Big]_{1\leq i,j \leq n}\right)\, \det \Big[\Psi^N_{N-j}\big(z_{i}^N\big)\Big]_{1 \leq i,j \leq N},
\end{equation}
where $\phi(x,y)=\1{x>y}$ and where we have introduced new values $z_n^{n-1}= +\infty$ (so that $\phi(z_n^{n-1},y)=\nobreak1$ for all $y \in \Z$). The determinant $\det \big[\phi(z_i^{n-1}, z_j^{n})\big]_{1\leq i,j \leq n}$ is the indicator function for the inequalities in $\GT_N$ between the levels $n-1$ and $n$ to hold. More precisely, if we define the space of integer-valued triangular arrays
\begin{equation}\label{eq:LambdaDef}
	\Lambda_N = \bigl\{\bz = (z_i^n)_{n, i} \colon z_i^n \in \Z,\; 1 \leq i \leq n \leq N \bigr\},
\end{equation}
then for $\bz = (z_i^n)_{n, i} \in \Lambda_N$ we have
\begin{equation}\label{eq:detIndicator}
\prod_{n=1}^N \det \big[\phi(z_i^{n-1}, z_j^{n})\big]_{1\leq i,j \leq n} = \1{\bz \in \GT_N}.
\end{equation}

\begin{exercise}
 Prove that the identity \eqref{eq:detIndicator} holds.
\end{exercise}

\noindent Hence, the identities \eqref{eq:transitions} and \eqref{eq:pointMeasure} yield
\begin{equation}\label{eq:prob_as_sum}
	\pp \bigl(X_t = \vec x \,|\, X_0 = \vec y\bigr) = \sum_{\substack{\bz \in \Lambda_N: \\ z_1^{n} = x_n}} \mathcal{W}_N(\bz),
\end{equation}
where the sum runs over the set $\Lambda_N$ of integer-valued triangular arrays with fixed boundary values $z_1^{n}=x_n$ for all $n = 1, \ldots, N$.

The variables \mbox{$\{z_i^n : i=1,\ldots,n\}$} in \eqref{eq:pointMeasure} can be interpreted as the positions of particles labelled by $i=1,\ldots,n$ at time $n$, so that $z_k^k,\ldots,z_k^n$ is the trajectory of particle $k$ with the transition kernel $\phi$ (which can be made a probability kernel by multiplying by a power of $2$). At time $n$ there are $n$ particles at positions $z_1^n,\ldots,z_n^n$, which make geometric jumps to the left at time $n+1$ conditioned by non-intersection (they are called \emph{vicious random walks}). Furthermore, a new $(n+1)$-st particle is added at position $z_{n+1}^{n+1} \geq z_n^n$ at time $n+1$.

The measure $\mathcal{W}_N$ on $\Lambda_N$ is not a probability measure, because the contribution coming from the functions $\Psi$ can give a negative value. We will show that this is a determinantal measure, which in particular means that the probability that the sites $z_{k_i}^{n_i}$ for $i=1,\ldots,M$ are occupied by the respective walks is proportional to
\begin{equation}\label{eq:vicious_det}
	\det \Bigl[\CK_t \big( (n_i, k_i, z_{k_i}^{n_i}), (n_j, k_j, z_{k_j}^{n_j}) \big)\Bigr]_{1 \leq i, j \leq M},
\end{equation}
for a correlation kernel $\CK_t$ which will be obtained below.

\subsection{The correlation kernel of the signed measure.}

We will prove in this section that the measure $\mathcal{W}_N$ defined in \eqref{eq:pointMeasure} on triangular arrays $\Lambda_N$ is a determinantal point process and will find its correlation kernel, so that in particular the property \eqref{eq:vicious_det} holds. Note that we will consider the measure $\mathcal{W}_N$ on the whole space $\Lambda_N$, and only after having found the correlation kernel will we will fix the boundary values as in \eqref{eq:prob_as_sum}.

\subsubsection*{The measure $\mathcal{W}_N$ is a conditional $\mathbf L$-ensemble.} It will be more convenient to write the values of a triangular array as a one-dimensional array. More precisely, if fix some point configuration $(z^n_i)_{n, i} \in \Lambda_N$, then every value $z^n_i = z$ can be identified with the triplet $(n, i, z)$, where $n \in \{1, \cdots, N\}$ and $i \in \{1, \cdots, n\}$. So that the values $(z^n_i)_{n, i}$ can be written as a one-dimensional array parametrized by $(n, i)$, e.g. in the case $N = 3$ we have
\begin{center}
\begin{tikzpicture}[
array/.style={matrix of nodes,nodes={draw, minimum width=10mm, minimum height=5mm},column sep=-\pgflinewidth, row sep=0.5mm, nodes in empty cells,
row 1/.style={nodes={draw=none, fill=none, minimum size=5mm}, font=\scriptsize, color=blue}}]
\centering
\matrix[array] (array) {
$(1,1)$ & $(2,1)$ & $(2,2)$ & $(3,1)$ & $(3,2)$ & $(3,3)$\\
  $z^1_1$ & $z^2_1$ & $z^2_2$ & $z^3_1$ & $z^3_2$ & $z^3_3$\\};
\end{tikzpicture}
\end{center}
\noindent With this idea in mind, the point process which we are going to define has the domain $\Zf$ of all triplets $(n, i, z)$, where $n \in \{1, \cdots, N\}$, $i \in \{1, \cdots, n\}$ and $z \in \Z$. In fact, we need to define a slightly larger domain $\Xf = \{1,2,\ldots,N\} \cup \Zf$, so that the numbers $\{1,2,\ldots,N\}$ will refer to either the values $z^{n-1}_{n}$ or the initial values $y_{n}$ of TASEP, and the determinantal point process will be conditioned by $\Zf^c = \Xf \setminus \Zf = \{1,2,\ldots,N\}$. Our aim is to define a function $L : \Xf \times \Xf \to \R$ such that for every set $Y \subset \Zf$ one has
\[
 \mathcal{W}_N(Y) = \det \big(L_{Y\cup\Zf^c}\big),
\]
(where we use the notation from \eqref{eq:LEnsemble}) which means that $\mathcal{W}_N$ is a conditional $L$-ensemble. As we will see below, this corresponds to fixing the initial values $y_i$ of TASEP and the `infinities' $z^{n-1}_n$.

Using the equivalence between point configurations $(z^n_i)_{n, i} \in \Lambda_N$ and one-dimensional arrays described in the previous paragraph, every point configuration from $\Xf$ can be obviously identified with an array as well (by adding new boxes indexed by $1$, $2$, $\ldots$, $N$). In the previous example we will have the array
\begin{center}
\begin{tikzpicture}[
array/.style={matrix of nodes,nodes={draw, minimum width=10mm, minimum height=7mm},column sep=-\pgflinewidth, row sep=0.5mm, nodes in empty cells,
row 1/.style={nodes={draw=none, fill=none}, font=\scriptsize, color=blue}}]
\matrix[array] (array) {
$1$ & $2$ & $3$ & $(1,1)$ & $(2,1)$ & $(2,2)$ & $(3,1)$ & $(3,2)$ & $(3,3)$\\
$*^{\vphantom{1}}_{\vphantom{1}}$ & $*^{\vphantom{1}}_{\vphantom{1}}$ & $*^{\vphantom{1}}_{\vphantom{1}}$ & $z^1_1$ & $z^2_1$ & $z^2_2$ & $z^3_1$ & $z^3_2$ & $z^3_3$\\};
\end{tikzpicture}
\end{center}
where by $*$ we mean the values indexed by $1$, $2$, $\cdots$, $N$. This means that $L_{\{\bz\}\cup\Zf^c}$ (recall the notation from \eqref{eq:LEnsemble}) is in fact a function of two arguments each of which is either  $(n, i)$, such that $n \in \{1, \cdots, N\}$ and $i \in \{1, \cdots, n\}$, or $k \in \{1, \cdots, N\}$. So we can identify $L_{\{\bz\}\cup\Zf^c}$ with a square matrix of size $N(N+3) / 2$. Now, we are going to define this matrix. 

\subsubsection*{Writing the $\mathbf L$-function in a matrix form.} Let us denote by $\CM_{n, m}$ the set of all $n \times m$ matrices with real entries. Then for $1 \leq n < m \leq N$ we define the function $W_{[n,m)} : \Lambda_N \to \CM_{n, m}$ such that for a fixed particles configuration $\bz = (z_i^n) \in \Lambda_N$ the matrix $W_{[n,m)}(\bz)$ is given by
\[
 \Big[W_{[n,m)} \big(\bz\big)\Big]_{i,j} = \phi^{m - n}(z_i^n,z_j^{m}) \1{n < m},\qquad 1\leq i\leq n,\,1\le j \leq m.
\]
Similarly we define the function $\Psi^{(N)} : \Lambda_N \to \CM_{N, N}$ to have the entries
\[
\Big[\Psi^{(N)}(\bz)\Big]_{i,j} = \Psi^N_{N-j}(z_i^N),\qquad 1\leq i,j \leq N,
\]
where the functions $\phi$ and $\Psi^N_{N-j}$ are as in the statement of Theorem~\ref{thm:BFPS}. Finally, for $0 \leq m < N$ we define the function $E_m : \Lambda_N \to \CM_{N, m+1}$ by the entries
\[
\Big[E_{m}(\bz)\Big]_{i,j}=
\left\{\begin{array}{ll} 
\phi(z_{m+1}^m,z_j^{m+1}),\quad& \textrm{if}~ i=m+1,\;1\leq j \leq m+1,\\
0,&\textrm{otherwise}.
\end{array}\right.
\]
In fact, the function $E_{m}$ should also have the values of $z_{m+1}^m$ as arguments, but we prefer not to indicate this, since we will always fix these values to be infinities. With these objects at hand we identity the function $L_{\{\bz\} \cup \Zf^c}$ (recall, that the configuration $\bz \in \Lambda_N$ has been fixed) with the following square matrix in block form:
\begin{equation}\label{eq:MatrixL}
\arraycolsep=5pt\renewcommand\arraystretch{1.2}
L_{\{\bz\} \cup \Zf^c}=\left[\begin{array}{cccccc}
0 & E_0 & E_1 & E_2 &\ldots & E_{N-1} \\
0 & 0 & -W_{[1,2)} & 0 & \cdots & 0 \\
0 & 0 & 0 & -W_{[2,3)} & \ddots & \vdots \\
\vdots & \vdots & \vdots & \ddots & \ddots &0    \\
0 & 0 & 0 & 0 &\cdots & -W_{[N-1,N)} \\
\Psi^{(N)} & 0 & 0 & 0 & \cdots & 0
\end{array}
\right] (\bz),
\end{equation}
where each block takes $\bz$ as an argument and gives a usual matrix. The first $N$ columns (resp. rows) of the matrix \eqref{eq:MatrixL} are parametrized by the values $\{1,2,\ldots,N\}$ and the succeeding columns (resp. rows) are parametrized by the pairs $\{(n, i) : 1 \leq n \leq N,\, 1 \leq i \leq n\}$ which have the lexicographic order.

\begin{example}
In the case $N = 2$, a configuration $\bz \in \Lambda_2$ contains $3$ values $\{z_1^1\} \cup \{z_2^1, z_2^2\}$, and the matrix $L_{\{\bz\} \cup \Zf^c}$ is given by
\[
L_{\{\bz\} \cup \Zf^c}=
\begin{tikzpicture}[baseline=0pt,
array/.style={matrix of nodes,nodes={draw=none, minimum width=17mm, minimum height=7mm}, row sep=0.5mm, nodes in empty cells}]

\matrix[array, left delimiter={[},right delimiter={]}, decoration=brace] (array) {
$0$ 				& $0$  				& $\phi(z_{1}^0,z_1^{1})$ 	& $0$ 				& $0$ \\
$0$ 				& $0$ 				& $0$ 				& $\phi(z_{2}^1,z_1^{2})$ 	& $\phi(z_{2}^1,z_2^{2})$ \\
$0$ 				& $0$ 				& $0$ 				& $-\phi(z_1^1,z_1^{2})$ 	& $-\phi(z_1^1,z_2^{2})$ \\
$\Psi^2_{1}(z_1^2)$ 	& $\Psi^2_{0}(z_1^2)$ 	& $0$ 				& $0$ 				& $0$ \\
$\Psi^2_{1}(z_2^2)$ 	& $\Psi^2_{0}(z_2^2)$ 	& $0$ 				& $0$ 				& $0$ \\};

\node[xshift=-2em] at (array-1-1.west) {\scriptsize\color{blue}$1$};
\node[xshift=-2em] at (array-2-1.west) {\scriptsize\color{blue}$2$};
\node[xshift=-2em] at (array-3-1.west) {\scriptsize\color{blue}$(1,1)$};
\node[xshift=-2em] at (array-4-1.west) {\scriptsize\color{blue}$(2,1)$};
\node[xshift=-2em] at (array-5-1.west) {\scriptsize\color{blue}$(2,2)$};

\node[yshift=1em] at (array-1-1.north) {\scriptsize\color{blue}$1$};
\node[yshift=1em] at (array-1-2.north) {\scriptsize\color{blue}$2$};
\node[yshift=1em] at (array-1-3.north) {\scriptsize\color{blue}$(1,1)$};
\node[yshift=1em] at (array-1-4.north) {\scriptsize\color{blue}$(2,1)$};
\node[yshift=1em] at (array-1-5.north) {\scriptsize\color{blue}$(2,2)$};

\begin{scope}[on background layer]
\fill[red!20] (array-4-1.north west) rectangle (array-5-2.south east);
\fill[green!20] (array-1-3.north west) rectangle (array-2-3.south east);
\fill[yellow!20] (array-1-4.north west) rectangle (array-2-5.south east);
\fill[blue!20] (array-3-4.north west) rectangle (array-3-5.south east);
\end{scope}

\draw[<-,shorten <=1pt, blue,rounded corners] (array-5-1.south east)
 	|-+(0,-0.4)
    	node[below] {\scriptsize\color{blue}$\Psi^N$};
\draw[<-,shorten <=1pt, blue,rounded corners] (array-2-3.south)
 	|-+(0,-0.25)
	--+(0.3,-0.25)
	|-+(0.3,-2.8)
    	node[below] {\scriptsize\color{blue}$E_0$};
\draw[<-,shorten <=1pt, blue,rounded corners] (array-3-4.south east)
 	|-+(0,-2)
    	node[below] {\scriptsize\color{blue}$-W_{[1,2)}$};
\draw[<-,shorten <=1pt, blue,rounded corners] (array-1-5.south east)
 	--+(0.23,0)
	|-+(0.23,-1.8)
	--+(-0.3,-1.8)
	|-+(-0.3,-3.55)
    	node[below] {\scriptsize\color{blue}$E_1$};
\end{tikzpicture},
\]
where the `infinities' $z^0_1$ and $z^1_2$ are fixed. In particular, it follows from the definition of the function $\phi$ in Theorem~\ref{thm:BFPS} that $\phi(z_{1}^0,z) = \phi(z_{2}^1,z) = 1$ for any $z \in \Z$.
\end{example}

\subsubsection*{The $\mathbf L$-function defines the measure $\mathcal{W}_N$.} It is not difficult to see (recall the definition \eqref{eq:pointMeasure}) that one has the identity $\mathcal{W}(\bz) = C \det (L_{\{\bz\} \cup \Zf^c})$, where $C \in \{\pm 1\}$. To see this, we define the square matrices 
\[
 \Big[T_{m}\Big]_{i, j} = \phi(z_i^m,z_j^{m+1}),\qquad 1\le i, j \leq m+1.
\]
One can see that the first row of $T_{m}$ coincides with the only non-zero row of $E_{m}(\bz)$ and the other rows of $T_{m}$, with $m \geq 1$, form a matrix coinciding with $-W_{[m,m+1)} \big(\bz\big)$. Then the matrix \eqref{eq:MatrixL} is obtained by rows permutations from the following one:
\[
 \arraycolsep=5pt\renewcommand\arraystretch{1.2}
\left[\begin{array}{cccccc}
0 & T_0 & 0 & 0 &\ldots & 0 \\
0 & 0 & T_1 & 0 & \cdots & 0 \\
0 & 0 & 0 & T_2 & \ddots & \vdots \\
\vdots & \vdots & \vdots & \ddots & \ddots &0    \\
0 & 0 & 0 & 0 &\cdots & T_{N-1} \\
\Psi^{(N)}(\bz) & 0 & 0 & 0 & \cdots & 0
\end{array}
\right],
\]
(one does it by swapping the first row of $T_2$ with the previous two rows, then the first row of $T_3$ with the previous three rows, and so on). The determinant of the latter matrix, and hence of $L_{\{\bz\} \cup \Zf^c}$, equals exactly $\mathcal{W}(\bz)$ defined in \eqref{eq:pointMeasure}.

\subsubsection*{The correlation kernel.} One can see that the minors \eqref{eq:MatrixL} uniquely define the function $L : \Xf \times \Xf \to \R$. For example, in the case $N=2$ above, one has
\begin{align*}
 L\big((1), (1,1, z)\big) = \phi(z_{1}^0, z), &\qquad L\big((1,1,y), (2,2, z)\big) = -\phi(y,z),\\
 L\big((2,1, z), (2)\big) = \Psi^2_{0}(z), &\qquad L\big((2,1, y), (1,1, z)\big) = 0.
\end{align*}
Hence, the point measure $\mathcal{W}_N$ is a conditional $L$-ensemble with this function $L$. By Proposition~\ref{prop:ConditionalLensembles}, the point-measure $\mathcal{W}_N$ on $\Zf$ is determinantal with correlation kernel $\CK : \Zf \times \Zf \to \R$ given by
\begin{equation}\label{eq:KDef}
 \CK=1_\Zf-(1_\Zf+L)^{-1}\big|_{\Zf\times\Zf}.
\end{equation}
In fact, we can compute the inverse of the operator above. To this end, we identify the function $L$ with a function on $\Lambda_N$ and with values in $\CM_{N(N+3)/2, N(N+3)/2}$ so that the identities \eqref{eq:MatrixL} hold. Then $L$ can be written is the block form
\[
L=\left[
    \begin{array}{cc}
      0 & B \\
      C & D_0 \\
    \end{array}
  \right]
\]
with the blocks 
\[
 B=[E_0,\ldots,E_{N-1}] \;  : \; \Lambda_N \to \CM_{N, N(N+1)/ 2},\qquad  C=[0,\ldots,0, (\Psi^{(N)})']' \;:\; \Lambda_N \to \CM_{N(N+1)/ 2, N},
\]
and $D_0 : \Lambda_N \to \CM_{N(N+1)/ 2, N(N+1)/ 2}$ given by
\[
 D_0 = \arraycolsep=5pt\renewcommand\arraystretch{1.2}
\left[\begin{array}{ccccc}
0 & -W_{[1,2)} & 0 & \cdots & 0 \\
0 & 0 & -W_{[2,3)} & \ddots & \vdots \\
 \vdots & \vdots & \ddots & \ddots &0    \\
0 & 0 & 0 &\cdots & -W_{[N-1,N)} \\
 0 & 0 & 0 & \cdots & 0
\end{array}
\right].
\]
Defining furthermore the function $D=1 + D_0$ which can be written as
\[
 D = \arraycolsep=5pt\renewcommand\arraystretch{1.2}
\left[\begin{array}{ccccc}
1 & -W_{[1,2)} & 0 & \cdots & 0 \\
0 & 1 & -W_{[2,3)} & \ddots & \vdots \\
 \vdots & \vdots & \ddots & \ddots &0    \\
0 & 0 & 0 &\cdots & -W_{[N-1,N)} \\
 0 & 0 & 0 & \cdots & 1
\end{array}
\right],
\]
the result of~\cite[Lem.~1.5]{borodinRains} yields an expression for the correlation kernel $\CK$. For the sake of completeness we provide a proof here. As for the function $L$, we will identify the functions $B$, $C$ and $D$ with the functions on $\Xf \times \Xf$. Moreover, we will denote for clarity the convolutions over the values in $\Zf$ by $\star$, and the convolution over $\{1, \cdots, N\}$ we will write as a usual product.

\begin{lemma}
 The operators $D$ and $M=B \star D^{-1} \star C$ are invertible, and the correlation kernel $\CK$ defined in \eqref{eq:KDef} can be written as
\begin{equation}\label{eq:KAfterInversion}
\CK= 1_\Zf - D^{-1} + D^{-1} \star C M^{-1} B \star D^{-1}.
\end{equation}
\end{lemma}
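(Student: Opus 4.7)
The plan is to view $1_\Zf + L$ as a $2 \times 2$ block operator for the decomposition $\Xf = \{1, \ldots, N\} \sqcup \Zf$. Since $1_\Zf$ vanishes on the $\{1,\ldots,N\}$-block and equals the identity on the $\Zf$-block, the block decomposition of $L$ stated just before the lemma yields
\[
1_\Zf + L \;=\; \begin{pmatrix} 0 & B \\ C & D \end{pmatrix}.
\]
I would then invert this block operator by a Schur-complement computation with respect to $D$ and read off the $\Zf \times \Zf$ entry.

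First, $D$ is invertible. By construction $D_0$ is block strictly upper triangular with respect to the grading by level $n \in \{1,\ldots,N\}$, hence nilpotent (in fact $D_0^N = 0$), so $D^{-1} = \sum_{k=0}^{N-1} (-D_0)^k$ is a finite sum, well-defined, itself block upper triangular, with $\det(D) = 1$. Next, the standard block-matrix inversion formula says that, provided $D$ and the Schur complement $S = 0 - B \star D^{-1} \star C = -M$ are both invertible,
\[
\begin{pmatrix} 0 & B \\ C & D \end{pmatrix}^{-1}
\;=\;
\begin{pmatrix} -M^{-1} & M^{-1}\, B \star D^{-1} \\ D^{-1} \star C\, M^{-1} & D^{-1} \;-\; D^{-1} \star C\, M^{-1}\, B \star D^{-1} \end{pmatrix}.
\]
Reading off the $(\Zf,\Zf)$ block and substituting into $\CK = 1_\Zf - (1_\Zf + L)^{-1}\big|_{\Zf \times \Zf}$ gives the claimed formula for $\CK$ directly.

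The one substantive point to justify is the invertibility of $M$. The block-triangular Schur determinant identity gives
\[
\det(1_\Zf + L) \;=\; \det(D)\,\det(-M) \;=\; (-1)^N \det(M),
\]
since $M$ is a finite $N \times N$ matrix and $\det(D)=1$. On the other hand, by the identification $\mathcal{W}_N(\bz) = \pm \det(L_{\{\bz\}\cup\Zf^c})$ established just above, together with the Fredholm expansion of $\det(1_\Zf + L)$, and using \eqref{eq:prob_as_sum} to conclude that $\sum_{\bz \in \Lambda_N} \mathcal{W}_N(\bz) = \sum_{\vec x \in \Omega_N} \pp(X_t = \vec x \mid X_0 = \vec y) = 1$, one obtains $\det(1_\Zf + L) = \pm 1 \neq 0$, and hence $\det(M) \neq 0$. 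I expect the main obstacle to be purely bookkeeping: one must keep careful track of which compositions are the convolution $\star$ over the infinite discrete set $\Zf$ and which are ordinary matrix products over the finite index set $\{1,\ldots,N\}$. Since $D^{-1}$ is a finite polynomial in $D_0$ and $M$ is finite-dimensional, no analytic subtlety arises, and the Schur-complement identity can be verified by direct block multiplication.
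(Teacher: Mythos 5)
Your proposed inversion of $1_\Zf + L$ via Schur complement with respect to the $D$-block is the same computation as the paper's, which simply writes down the Schur-complement form of $(1_\Zf+L)^{-1}$ and verifies it by block multiplication; the argument that $D$ is invertible via nilpotency of $D_0$ is also what the paper does (it computes $D^{-1}=\sum_{k\ge 0}D_0^{\star k}$ right after the lemma). The genuinely different ingredient is your argument for invertibility of $M$, and that is where there is a gap. You invoke the identity $\det(1_\Zf+L)=\det(D)\det(-M)=(-1)^N\det(M)$ and then try to evaluate $\det(1_\Zf+L)$ by the Fredholm expansion. Two things go wrong. First, $D$ acts on the infinite-dimensional space $\ell^2(\Zf)$ with blocks $W_{[n,m)}$ whose entries $\phi^{m-n}(z_i^n,z_j^m)$ are polynomials, hence not trace class; so neither $\det(D)$ nor the block-triangular Schur determinant identity can be invoked as proper Fredholm statements here — they are purely formal. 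Second, even granting the Fredholm expansion $\det(1_\Zf+L)=\sum_{Y\subset\Zf}\det(L_{Y\cup\Zf^c})$, the sum runs over all finite subsets $Y\subset\Zf$, not only those $Y=\{\bz\}$ that encode a triangular array with exactly one point per index $(n,i)$. You implicitly assume that all the non-array configurations contribute zero; this is plausible from the cyclic block structure of $L$, but it is not immediate and you do not justify it. The paper sidesteps both issues entirely: directly after the lemma it computes $M$ explicitly in \eqref{eq:MatrixM} (a calculation that uses only the definition $M=B\star D^{-1}\star C$ and the explicit expressions for $B$, $C$, $D^{-1}$, not the lemma itself) and reads off that $M$ is upper triangular with $1$'s on the diagonal, hence trivially invertible. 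I would replace your determinant-normalization argument for $M$ with this direct observation.
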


\begin{proof}
The claim will follow if we show that one has
\begin{equation}\label{eq:LInverse}
 (1_\Zf+L)^{-1} = \arraycolsep=5pt\renewcommand\arraystretch{1.2}
 \left[
    \begin{array}{cc}
      - M^{-1} & M^{-1} B \star D^{-1} \\
      D^{-1} \star C M^{-1} & \quad D^{-1} - D^{-1} \star C M^{-1} B \star D^{-1} \\
    \end{array}
  \right],
\end{equation}
where $M$ is as in the statement of this lemma. The easiest way is to check that the matrix on the right-hand side multiplied by $1_\Zf+L$ is the identity matrix, i.e.
\begin{align*}
(1_\Zf+L)&\arraycolsep=5pt\renewcommand\arraystretch{1.2}
\left[
    \begin{array}{cc}
      - M^{-1} & M^{-1} B \star D^{-1} \\
      D^{-1} \star C M^{-1} & \quad D^{-1} - D^{-1} \star C M^{-1} B \star D^{-1} \\
    \end{array}
  \right]\\
  &= \left[
    \begin{array}{cc}
      0 & B \\
      C & D \\
    \end{array}
  \right]  \left[
    \begin{array}{cc}
      - M^{-1} & M^{-1} B \star D^{-1} \\
      D^{-1} \star C M^{-1} & \quad D^{-1} - D^{-1} \star C M^{-1} B \star D^{-1} \\
    \end{array}
  \right]\\
   &= \left[
    \begin{array}{cc}
      B \star D^{-1} \star C M^{-1} & \quad B \star D^{-1} - B \star D^{-1} C \star M^{-1} B \star D^{-1} \\
      - C  M^{-1} + C M^{-1} & \quad CM^{-1} B \star D^{-1} + 1 -  C M^{-1} B \star D^{-1} \\
    \end{array}
  \right]\\
  &= \left[
    \begin{array}{cc}
      M M^{-1} & \quad B \star D^{-1} - M M^{-1} B \star D^{-1} \\
      - C M^{-1} + C M^{-1} & \quad CM^{-1} B \star D^{-1} + 1 - C M^{-1} B \star D^{-1} \\
    \end{array}
  \right] = \left[
    \begin{array}{cc}
      1 & 0 \\
      0 & 1 \\
    \end{array}
  \right],
\end{align*}
which means that \eqref{eq:LInverse} indeed holds. Taking the restriction $(1_\Zf+L)^{-1}\big|_{\Zf\times\Zf}$ we get the right-bottom block of the matrix in \eqref{eq:LInverse} which combined with the definition \eqref{eq:KDef} gives exactly the expression on the right-hand side of \eqref{eq:KAfterInversion}.
\end{proof}

The inverse of the matrix $D$ can be computed very easily
\[
D^{-1} = (1 + D_0)^{-1} = \sum_{k \geq 0} D_0^{\star k} =\arraycolsep=5pt\renewcommand\arraystretch{1.2}
\left[\begin{array}{cccc}
1 & W_{[1,2)} & \cdots & W_{[1,N)} \\
0& 1  & \ddots & \vdots \\
\vdots & \ddots  & \ddots & W_{[N-1,N)} \\
0& 0 &  0 &1
\end{array}
\right],
\]
so that the submatrix of $1 - D^{-1}$ with rows $(n, \cdot)$ and columns $(m, \cdot)$ is $W_{[n, m)} \1{n < m}$. 

\begin{exercise}
 Prove that the inverse of $D$ is indeed given by the matrix above.
\end{exercise}

\noindent Moreover, we can easily compute
\[\arraycolsep=5pt\renewcommand\arraystretch{1.2}
D^{-1} \star C = \left[\begin{array}{c}
W_{[1,N)} \star \Psi^{(N)} \\
\vdots \\
W_{[N-1,N)} \star \Psi^{(N)} \\
\Psi^{N}
\end{array}\right],
\]
as well as
\[
B \star D^{-1} = \arraycolsep=5pt\renewcommand\arraystretch{1.2} \left[\begin{array}{cccc}
E_0 & E_0 \star W_{[1,2)}+E_1 & \cdots & \sum_{k=1}^{N-1} E_{k-1} \star W_{[k,N)}+ E_{N-1}
\end{array}\right].
\]
Therefore the $(n,m)$-block of the correlation kernel $\CK$ is given by
\begin{equation}\label{eq:K_interm}
\bigl[\CK \bigr]_{(n, \cdot), (m, \cdot)} = -W_{[n,m)} \1{n < m}+W_{[n,N)} \star \Psi^{(N)} M^{-1} \left(\sum_{k=1}^{m-1} E_{k-1} \star W_{[k,m)}+ E_{m-1}\right).
\end{equation}
It follows from the property \eqref{eq:PsiRecursion} that for $\bz \in \Lambda_N$ one has
\[
\left[W_{[n,N)} \star \Psi^{(N)}(\bz)\right]_{i,j} = \left(\phi^{N - n} * \Psi^N_{N-j}\right)(z_i^n)=\Psi^{n}_{n-j}(z_i^n),
\]
and it remains to evaluate the last part of \eqref{eq:K_interm}. For the $N\times m$ matrix in the bracket in \eqref{eq:K_interm} we have
\[
\left[ \left(\sum_{k=1}^{m-1} E_{k-1} \star W_{[k,m)}+ E_{m-1} \right)(\bz)\right]_{i,j} =
\left\{\begin{array}{ll}
\phi^{m + 1}(z_i^{i-1},z_j^m),\quad & 1\leq i \leq m,\\
0, & m < i \leq N,
\end{array}\right.
\]
and we arrive at the expression
\[
 \bigl[\CK(\bz)\bigr]_{(n, i), (m, j)} = - \phi^{m - n}(z_i^n,z_j^{m}) \1{n < m} + \sum_{\ell, k = 1}^m \Psi^{n}_{n-\ell}(z_i^n)\, \bigl[M^{-1}\bigr]_{\ell, k}\, \phi^{m + 1}(z_k^{k-1},z_j^m),
\]
where the matrix $M$ is given by
\begin{equation}\label{eq:MatrixM}
 	\bigl[M\bigr]_{i, j} = 	
\begin{cases}
 	\left(\phi^{N - i} * \Psi^N_{N- j}\right)(z_i^{i-1}),&  i < j,\\
	1,& i = j,\\
	0,& i > j.
\end{cases}
\end{equation}

\subsubsection*{Biorthogonalization of the correlation kernel.} The functions $\phi^{m + 1}(z_i^{i-1},x)$ with $i =1, \ldots, m$ form a basis of ${\rm span}\bigl\{1,x,\ldots,x^{m-1}\bigr\}$ (by considering $z_i^{i-1}$ to be a fixed value). Since by assumption the functions $\bigl\{\Phi^m_{m-1}(x),\ldots,\Phi^m_0(x)\bigr\}$ form a basis of this space as well, we can define a matrix $A_m \in \CM_{m, m}$ which does a change of basis to $\bigl\{\Phi^m_{m-1}(x),\ldots,\Phi^m_0(x)\bigl\}$, namely
\[
	\phi^{m + 1}(z_i^{i-1},x)=\sum_{\ell=1}^m \bigl[A_m\bigr]_{i,\ell}\, \Phi^m_{m-\ell}(x).
\]
We convolve this equation with $\Psi^m_{m-j}(x)$ and obtain, using the biorthogonality assumption,
\[
\bigl[A_m\bigr]_{i,j}= \left (\phi^{m  - i + 1}*\Psi^m_{m-j}\right)(z_i^{i-1}).
\]
In particular, we have $A_N=M$, and since $M$ is invertible, the two properties from Theorem~\ref{thm:BFPS} indeed define the functions $\Phi^n_k$ uniquely. Thus we obtain
\begin{equation}\label{eq:some_expression}
\sum_{k=1}^m \bigl[M^{-1}\bigr]_{\ell, k}\, \phi^{m + 1}(z_k^{k-1},z_j^m) = \sum_{k=1}^m \bigl[A_N^{-1}\bigr]_{\ell, k}\, \sum_{i=1}^m \bigl[A_m\bigr]_{k,i}\, \Phi^m_{m-i}(z_j^m),
\end{equation}
and our claim is now to prove that 
\begin{equation}\label{eq:AOrth}
 \sum_{k=1}^m \bigl[A_N^{-1}\bigr]_{\ell, k} \bigl[A_m\bigr]_{k,i} = \delta_{\ell, i},
\end{equation}
for all $1 \leq \ell \leq N$ and $1 \leq i \leq m$. We notice that 
\[
\bigl[A_m\bigr]_{k, i}= \left (\phi^{m  - k + 1}*\Psi^m_{m-i}\right)(z_k^{k-1}) = \left (\phi^{N - k + 1}*\Psi^N_{N-i}\right)(z_k^{k-1}) = \bigl[A_N\bigr]_{k, i},
\]
for $1 \leq k, i \leq m$. Thus, using the fact that $A_N$ is upper-triangular (which follows from \eqref{eq:MatrixM}), we obtain for $1 \leq i \leq m$:
\[
 \sum_{k=1}^m \bigl[A_N^{-1}\bigr]_{\ell, k} \bigl[A_m\bigr]_{k,i} = \sum_{k=1}^m \bigl[A_N^{-1}\bigr]_{\ell, k} \bigl[A_N\bigr]_{k,i} = \sum_{k=1}^N \bigl[A_N^{-1}\bigr]_{\ell, k} \bigl[A_N\bigr]_{k,i} = \delta_{\ell, i},
\] 
which is exactly our claim. This gives that the right-hand side of \eqref{eq:some_expression} is equal to $\Phi^m_{m-\ell}(z_j^m)$, and the $(n,m)$-th block of the correlation kernel $\CK$ is given by
\[
 \bigl[\CK(\bz)\bigr]_{(n, i), (m, j)} = - \phi^{m - n}(z_i^n,z_j^{m}) \1{n < m} + \sum_{\ell = 1}^m \Psi^{n}_{n-\ell}(z_i^n)\, \Phi^m_{m-\ell}(z_j^m),
\]
which, if we go to the function $\CK : \Zf \times \Zf \to \R$, is equivalent to 
\[
\CK \bigl((n, i, x), (m, j, y)\bigr)=-\phi^{m - n}(x,y) \1{m > n} + \sum_{\ell = 1}^m\Psi^{n}_{n-\ell}(x)\, \Phi^{m}_{m-\ell}(y).
\]
Since we are interested in the distribution of the particles $z^n_1$ (see Figure~\ref{fig:GT}), the kernel \eqref{eq:Kt} is obtained by fixing the values $i = j = 1$ in the above expression.

\begin{exercise} Follow the proof of Lemma~\ref{lem:GapProbability} to show that the identity \eqref{eq:extKernelProbBFPS} holds.
\end{exercise}

\section{Explicit formulas for the correlation kernel}
\label{sec:exact}

Only for a few special cases of initial data (step, see e.g.~\cite{dimers}; and periodic~\cite{borFerPrahSasam,bfp,bfs}) were the correlation kernels \eqref{eq:Kt} known, and hence only for those choices asymptotics could be performed in the TASEP and related cases, leading to the Tracy-Widom $F_{\text{GUE}}$ and $F_{\text{GOE}}$ one-point distributions, and then later to the Airy processes for multipoint distributions. Below we provide formulas for arbitrary initial data and their extensions as $N \to \infty$.

\subsection{Finite initial data.}

We conjugate the kernels from Theorem~\ref{thm:BFPS} by powers of $2$: 
\begin{equation}\label{eq:defQ}
  Q(x,y)=\frac{1}{2^{x-y}}\1{x>y},
\end{equation}
as well as
\begin{equation}\label{eq:defPsi}
\Psi^n_k(x)=\frac1{2\pi\I}\oint_{\Gamma_0}dw\,\frac{(1-w)^k}{2^{x-X_0(n-k)}w^{x+k+1-X_0(n-k)}} e^{t(w-1)}.
\end{equation}
Then the functions $\Phi_k^{n}(x)$, $k=0,\ldots,n-1$, are defined implicitly by
\begin{enumerate}[label={\normalfont (\arabic{*})}]
\item the biorthogonality relation $\sum_{x\in\Z}\Psi_k^{n}(x)\Phi_\ell^{n}(x)=\1{k=\ell}$;
\label{ortho}
\smallskip
\item $2^{-x}\Phi^n_k(x)$ with $0 \leq k < n$ form a basis of ${\mathrm{span}}\bigl\{x^k : 0 \leq k < n\bigr\}$.
\end{enumerate} 
We are interested in computing the kernel,
\begin{equation}\label{eq:Kt_new}
K_t(n_1,x_1;n_2,x_2)=-Q^{n_2-n_1}(x_1,x_2)\1{n_1<n_2}+\sum_{k=1}^{n_2}\Psi^{n_1}_{n_1-k}(x_1)\Phi^{n_2}_{n_2-k}(x_2),
\end{equation}
for $n_1,n_2\in\Z_{\geq1}$ and $x_1,x_2\in\Z$. The initial data $X_0$ appears in a simple way in the functions $\Psi_k^n$, which can be computed explicitly. We note that $Q$ is the transition matrix of a geometric random walk on $\Z$ and has a left inverse 
\begin{equation}\label{eq:Qinv}
Q^{-1}=I+2\nablap, 
\end{equation}
where we recall that $\nablap f(x) = f(x+1) - f(x)$. Moreover, for all $m,n\in \Z_{\geq 0}$ we have the identities
\[ Q^{n-m}\Psi^n_{n-k}=\Psi^m_{m-k}, \hspace{1cm} \Psi^{n}_k=e^{-\frac{t}{2} (I+\nablam)}Q^{-k}\delta_{X_0(n-k)},\]
where $\delta_x(y) = \uno{x = y}$ is the Kronecker's delta and $\nablam f(x) = f(x) - f(x-1)$.
  
\begin{exercise} 
Prove that these identities hold.
\end{exercise}

\noindent In order to give exact formulas for the functions $\Phi^n_k$ from Theorem~\ref{thm:BFPS} we need to define the functions $h^n_k : \Z_{\geq 0} \times \Z \to \R$ as solutions to the initial-boundary value problem for the backwards heat equation
\begin{subnumcases}{\label{bhe}}
(\Qt)^{-1}h^n_k(\ell,z)=h^n_k(\ell+1,z), &  $0 \leq \ell<k,\,z \in \Z$;\label{bhe1}\\ 
h^n_k(k,z)=2^{z-X_0(n-k)}, & $z \in \Z$;\label{bhe2}\\ 
h^n_k(\ell,X_0(n-\ell))= 0, & $0 \leq \ell<k$;\label{bhe3}
\end{subnumcases} 
for $n\geq1$ and $0\le k <n$ and where $Q^*$ is the  adjoint of $Q$.

\begin{exercise} 
Show that the dimension of $\ker (Q^*)^{-1}$ is $1$ and use this to show existence and uniqueness of the solution to the equation \eqref{bhe}.
\end{exercise}

\begin{remark} 
The expression $\Qt h^{n}_{k}(k,z)$ is divergent so you can't write  $\Qt h^{n}_k(\ell+1,z)=h^{n}_k(\ell,z)$.
\end{remark}

\noindent With these functions at hand we are ready to give exact formulas for $\Phi^n_k$.

\begin{theorem}\label{thm:h_heat}
The functions $\Phi^n_k$ from Theorem~\ref{thm:BFPS} are given by
\begin{equation}\label{eq:defPhink}
 \Phi^n_k(z) =\sum_{y\in \Z} h^{n}_k(0,y)\,e^{\frac{t}{2} (I+\nabla^-)}(y,z).
\end{equation}
\end{theorem}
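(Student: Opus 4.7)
The plan is to verify that the formula \eqref{eq:defPhink} satisfies the two defining properties of $\Phi^n_k$ listed at the start of Section~\ref{sec:exact}: biorthogonality with $\Psi^n_k$, and the property that the functions $\{2^{-x}\Phi^n_k(x)\}_{0\leq k<n}$ form a basis for polynomials of degree $<n$. These two properties characterise $\Phi^n_k$ uniquely, so verifying them is enough; along the way one also obtains existence and uniqueness of the solution to \eqref{bhe}.

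For biorthogonality, I would substitute \eqref{eq:defPhink} into $\sum_{z}\Psi^n_k(z)\Phi^n_\ell(z)$, interchange summation, and invoke the identity $\Psi^n_k=e^{-\frac{t}{2}(I+\nabla^-)}Q^{-k}\delta_{X_0(n-k)}$ from the exercise earlier in this section. The two conjugating exponentials cancel and the pairing collapses to
\[
\sum_{y\in\Z}h^n_\ell(0,y)\bigl(Q^{-k}\delta_{X_0(n-k)}\bigr)(y)=\bigl((Q^*)^{-k}h^n_\ell(0,\cdot)\bigr)(X_0(n-k)).
\]
For $k\leq\ell$, iterating \eqref{bhe1} gives $(Q^*)^{-k}h^n_\ell(0,\cdot)=h^n_\ell(k,\cdot)$, which by \eqref{bhe2}--\eqref{bhe3} equals $1$ when $k=\ell$ and $0$ when $k<\ell$. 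For $k>\ell$, I would apply $(Q^*)^{-(k-\ell)}$ to $h^n_\ell(\ell,z)=2^{z-X_0(n-\ell)}$ and use that $(Q^*)^{-1}=I-2\nabla^-$ annihilates $2^{z-a}$ for every $a$, making the whole pairing vanish. Combined, this yields $\delta_{k,\ell}$.

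For the span property I would show by backward induction on $\ell=k,k-1,\ldots,0$ that $h^n_k(\ell,z)=2^z p_{k,\ell}(z)$ with $p_{k,\ell}$ a polynomial in $z$ of degree exactly $k-\ell$. The base case $\ell=k$ is the initial condition \eqref{bhe2}. For the inductive step, the elementary identity $(I-2\nabla^-)(2^z p(z))=-2^z\nabla^- p(z)$ converts \eqref{bhe1} into the first-order difference equation $\nabla^- p_{k,\ell}=-p_{k,\ell+1}$ on the polynomial factor, which admits a polynomial solution of degree one larger than $p_{k,\ell+1}$; the one-dimensional family of constant homogeneous solutions (reflecting $\ker(Q^*)^{-1}=\mathrm{span}\{2^z\}$) is pinned down uniquely by \eqref{bhe3}. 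Feeding $h^n_k(0,y)=2^y p_{k,0}(y)$ into \eqref{eq:defPhink} and using the explicit kernel $e^{\frac{t}{2}(I+\nabla^-)}(y,z)=e^t(-t/2)^{y-z}/(y-z)!$ for $y\geq z$ (which follows from $I+\nabla^-=2I-\tau$ with $\tau f(x)=f(x-1)$), the change of variable $m=y-z$ gives
\[
\Phi^n_k(z)=e^t\,2^z\sum_{m\geq 0}p_{k,0}(z+m)\,\frac{(-t)^m}{m!},
\]
and the inner series is $e^{-t}$ times a polynomial in $z$ of the same degree as $p_{k,0}$. Hence $2^{-z}\Phi^n_k(z)$ is a polynomial of degree exactly $k$, and running $k=0,1,\ldots,n-1$ produces polynomials of pairwise distinct degrees that span the target space. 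The main obstacle is the inductive step for $h^n_k$: the remark after \eqref{bhe} explicitly warns that one cannot apply $Q^*$ literally to $h^n_k(k,\cdot)$, so the backward recursion cannot be executed at the level of the functions themselves; the resolution is to pass to the polynomial factor, on which $(Q^*)^{-1}$ acts as the honest difference operator $-\nabla^-$ and the degree and boundary condition can be tracked explicitly.
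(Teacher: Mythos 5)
Your proposal is correct and follows essentially the same route as the paper's own proof: prove by (backward) induction that $2^{-z}h^n_k(\ell,z)$ is a polynomial of degree $k-\ell$, verify biorthogonality by collapsing the double sum via $\Psi^n_\ell=e^{-\frac{t}{2}(I+\nabla^-)}Q^{-\ell}\delta_{X_0(n-\ell)}$ and then using \eqref{bhe1}--\eqref{bhe3} plus $2^z\in\ker(Q^*)^{-1}$, and finally transfer the polynomial property through the heat semigroup. The only cosmetic difference is that you phrase the inductive step as the difference equation $\nabla^-p_{k,\ell}=-p_{k,\ell+1}$ on the polynomial factor (with \eqref{bhe3} fixing the constant of antidifferentiation), whereas the paper directly sums the relation $\tilde h^n_k(\ell,y)=\tilde h^n_k(\ell-1,y-1)-\tilde h^n_k(\ell-1,y)$ from $y=X_0(n-\ell+1)+1$ to $x$; these are the same calculation. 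Note also that the Fubini interchange in your biorthogonality step relies on the polynomial growth of $h^n_k(0,\cdot)$ which you establish afterwards, so for a fully rigorous write-up the order of the two parts should be reversed, as in the paper.
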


\begin{proof}
Before proving \eqref{eq:defPhink} we need to prove that $2^{-x}h^{n}_{k}(0,x)$ is a polynomial of degree $k$. We proceed by induction.
Note first that, by \eqref{bhe2}, $2^{-x}h^{n}_{k}(k,x)$ is a polynomial of degree 0.
Assume now that $\tilde h^n_k(\ell,x)=2^{-x}h^{n}_{k}(\ell,x)$ is a polynomial of degree $k-\ell$ for some $0<\ell\leq k$.
By \eqref{bhe1} and \eqref{eq:Qinv} we have
\begin{equation}\label{eq:tildeh-eq}
  \tilde h^{n}_{k}(\ell,y)=2^{-y}(\Qt)^{-1}h^{n}_{k}(\ell-1,y)=\tilde h^{n}_{k}(\ell-1,y-1)-\tilde h^{n}_{k}(\ell-1,y)
\end{equation}
Taking $x\geq X_0(n-\ell+1)$ and summing gives $\tilde h^{n}_{k}(\ell-1,x)=-\sum_{y=X_0(n-\ell+1)+1}^{x}2^{-y}h^{n}_{k}(\ell,y)$ thanks to \eqref{bhe3}, which by the inductive hypothesis is a polynomial of degree $k-\ell+1$.
The function\break$\tilde h^n_k(\ell-1,x)\big|_{x\geq X_0(n-\ell+1)}$ has a unique polynomial extension to all $\Z$, which by uniqueness of solutions of \eqref{bhe} and \eqref{eq:tildeh-eq} shows that $\tilde h^{n}_{k}(\ell-1,\cdot)$ is a polynomial of degree $k-\ell+1$ as needed.

Now we check the biorthogonality relation of \eqref{eq:defPhink} with the functions $\Psi^n_k$. We have
\begin{align*}
	\sum_{z\in\Z}\Psi^n_\ell(z)\Phi^n_k(z) &=\sum_{z_1,z_2\in\Z}\sum_{z\in\Z}e^{-\frac{t}{2} (I+\nabla^-)}(z,z_1)Q^{-\ell}(z_1,X_0(n-\ell))h^{n}_{k}(0,z_2)e^{\frac{t}{2} (I+\nabla^-)}(z_2,z)\\
	&=\sum_{z\in\Z}Q^{-\ell}(z,X_0(n-\ell))h^{n}_{k}(0,z)
	=(\Qt)^{-\ell}h^{n}_{k}(0,X_0(n-\ell)),
\end{align*}
where in the first equality we have used the fact that $2^{-x}h^n_k(0,x)$ is a polynomial together with the fact that the $z_1$ sum is finite to apply Fubini. For $0 \leq \ell\leq k$, we use all equations in \eqref{bhe} to get
\[
(\Qt)^{-\ell}h^{n}_{k}(0,X_0(n-\ell))=h^{n}_{k}(\ell,X_0(n-\ell))=\1{k=\ell}.
\]
For $\ell>k$, we use \eqref{bhe1} and $2^z\in\ker{(\Qt)^{-1}}$ to obtain
\[
(\Qt)^{-\ell}h^{n}_{k}(0,X_0(n-\ell))=(\Qt)^{-(\ell-k-1)}(\Qt)^{-1}h^{n}_{k}(k,X_0(n-\ell))=0.
\] 
Since $2^{-x}\Phi^n_k(x)$ is a polynomial of degree $k$ in $x$, this one is as well.
\end{proof}

\subsection{Correlation kernel as a transition probability.} 
\label{subsec:hitting}

In this section we will perform summation in \eqref{eq:Kt_new} and obtain a formula for the correlation kernel involving hitting probabilities of a random walk. We start with noting that it is sufficient to fund the kernel \eqref{eq:Kt_new} with $n_1 = n_2$.

\begin{exercise}  Show that the kernel \eqref{eq:Kt_new} can be recovered from the kernel  $K^{(n)}_t(x_1,x_2)=K_t(n,x_1;n,x_2)$ by
\begin{equation}
K_t(n_i,\cdot;n_j,\cdot)=Q^{n_j-n_i} \bigl(-\1{n_i<n_j}+K^{(n_j)}_t\bigr).
\end{equation}
\end{exercise}

\noindent From the exercise, we can restrict our discussion to the kernel $K^{(n)}_t$. Using the functions $h^n_k$ let us define
\begin{equation}
	G_{0,n}(z_1,z_2)=\sum_{k=0}^{n-1}Q^{n-k}(z_1,X_0(n-k))h^{n}_{k}(0,z_2),
\end{equation}
so that the correlation kernel $K_t^{(n)}$ equals
\begin{equation}\label{eq:Kt-decomp}
	K_t^{(n)}=e^{-\frac{t}{2} (I+\nabla^-)} Q^{-n}G_{0,n}e^{\frac{t}{2} (I+\nabla^-)}.
\end{equation}
Next, we note that the functions $h^n_k$ can be written as hitting probabilities of a random walk. More precisely, let $\Qt$ (the adjoint of $Q$) be the transition kernel of the random walk $B_m^*$ with Geom$\bigl[\frac12\bigr]$ jumps (strictly) to the right. Then for $0\leq\ell\leq k\leq n-1$ we define stopping times
\[
\tau^{\ell,n}=\min \bigl\{m\in\{\ell,\ldots,n-1\}\!:\,B^*_m> X_0({n-m})\bigr\},
\]
with the convention that $\min\emptyset=\infty$. For $z\leq X_0(n-\ell)$ the function $h^n_k$ can be written as
\begin{equation}
h^n_k(\ell,z)=\pp_{B^*_{\ell-1}=z}\big(\tau^{\ell,n}=k\big).
\end{equation}
  
\begin{exercise} 
Prove this identity.
\end{exercise}

\begin{exercise} Suppose $X$ 
 is a random variable taking values in $\mathbb{N}$
with the \emph{memoryless property},i.e. for each pair of numbers $m,n\in\mathbb{N}$ one has
$\pp(X\ge m+n\mid X>n) = \pp(X\ge m)$.
Show that $X$ has a geometric distribution.
\end{exercise}

From the memoryless property of the geometric distribution we get for all $y > X_0(n-k)$,
\begin{equation}\label{eq:memoryless}
\pp_{B^*_{-1}=z}\big(\tau^{0,n}=k,\,B^*_k=y\big)=2^{X_0(n-k)-y}\pp_{B^*_{-1}=z}\big(\tau^{0,n}=k\big),
\end{equation}
and as a consequence, for $z_2\leq X_0(n)$, we have
\begin{equation}\label{eq:G-formula}
   G_{0,n}(z_1,z_2)=\pp_{B^*_{-1}=z_2}\big(\tau^{0,n}<n,\,B^*_{n-1}=z_1\big),
\end{equation}
which is the probability for the walk starting at $z_2$ at time $-1$ to end up at $z_1$ after $n$ steps, having hit the curve $\big(X_0(n-m)\big)_{m=0,\ldots,n-1}$ in between.

\begin{exercise} 
Show that the identities \eqref{eq:memoryless} and \eqref{eq:G-formula} indeed hold.
\end{exercise}

\noindent The next step is to obtain an expression along the lines of \eqref{eq:G-formula} which holds for all $z_2$, and not just for $z_2\leq X_0(n)$. To this end, we need to define analytic extensions of the kernels $Q^n$. More precisely, for each fixed $y_1$, $2^{-y_2}Q^n(y_1,y_2)$ extends in $y_2$ to a  polynomial $2^{-y_2}\bar{Q}^{(n)}(y_1,y_2)$ of degree $n-1$ with 
\begin{equation}
\bar{Q}^{(n)}(y_1,y_2)= \frac{1}{2\pi \I} \oint_{\Gamma_0} dw\,\frac{(1+w)^{y_1 - y_2 -1}}{2^{y_1-y_2} w^n},
\end{equation}
so that for $y_1-y_2\geq n$ we have $\bar{Q}^{(n)}(y_1,y_2)=Q^n(y_1,y_2)$. Furthermore, it is easy to prove that
$Q^{-1}\bar{Q}^{(n)}=\bar{Q}^{(n)}Q^{-1}=\bar{Q}^{(n-1)}$ for $n>1$, but $Q^{-1}\bar{Q}^{(1)}=\bar{Q}^{(1)}Q^{-1}=0$, and
$\bar{Q}^{(n)}\bar{Q}^{(m)}$ is divergent (so the $\bar{Q}^{(n)}$ are no longer a group like $Q^n$).

\begin{exercise} 
Prove that these properties of $\bar{Q}^{(n)}$ indeed hold.
\end{exercise}

\noindent Let $B_m$ be now a random walk with transition matrix $Q$ (that is, $B_m$ has Geom$\bigl[\tfrac{1}{2}\bigr]$ jumps strictly to the left) for which we define the stopping time
\begin{equation}\label{eq:deftau}
 \tau= \min\bigl\{ m\ge 0: B_m> X_0(m+1)\bigr\}.
\end{equation}
Using this stopping time and the extension of $Q^m$ we obtain:

\begin{lemma}\label{lem:G0n-formula}
For all $z_1,z_2\in\Z$ we have the identity
\begin{equation}
G_{0,n}(z_1,z_2) = \1{z_1>X_0(1)} \bar{Q}^{(n)}(z_1,z_2) + \1{z_1\leq X_0(1)}\ee_{B_0=z_1}\!\left[ \bar{Q}^{(n - \tau)}(B_{\tau}, z_2)\1{\tau<n}\right].
\end{equation}
\end{lemma}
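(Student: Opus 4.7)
The plan is to show that both sides, as functions of $z_2$ with $z_1$ and $n$ fixed, are $2^{z_2}$ times a polynomial in $z_2$ of degree at most $n-1$, and then to verify equality on the infinite set $\{z_2\leq X_0(n)\}$, where both sides admit transparent random walk interpretations. Polynomial uniqueness will then propagate the equality to all $z_2\in\Z$.

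For the polynomial structure of the left-hand side, I would use the definition $G_{0,n}(z_1,z_2)=\sum_{k=0}^{n-1}Q^{n-k}(z_1,X_0(n-k))h^{n}_{k}(0,z_2)$ together with the fact, already proved at the beginning of the proof of Theorem~\ref{thm:h_heat}, that $2^{-z_2}h^{n}_{k}(0,z_2)$ is a polynomial of degree $k$ in $z_2$. On the right-hand side, $2^{-z_2}\bar{Q}^{(n)}(z_1,z_2)$ is polynomial of degree $n-1$ by construction; for the second term, the walk $B_m$ makes strictly leftward $\mathrm{Geom}(1/2)$ jumps, so on $\{\tau=k,\,B_\tau=y\}$ one has $X_0(k+1)<y<z_1$, making the sum over $y$ finite for each $k$, and the whole expectation becomes a finite linear combination of the polynomials $2^{-z_2}\bar{Q}^{(n-k)}(y,z_2)$, each of degree at most $n-k-1\leq n-1$.

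To check equality on $\{z_2\leq X_0(n)\}$, I would start from the representation $G_{0,n}(z_1,z_2)=\pp_{B^*_{-1}=z_2}\big(\tau^{0,n}<n,\,B^*_{n-1}=z_1\big)$ recorded in \eqref{eq:G-formula} and perform path reversal. The bijection $m\mapsto n-1-m$ sends a strictly-right $\mathrm{Geom}(1/2)$ path $(B^*_m)_{m=-1}^{n-1}$ from $z_2$ to $z_1$ to a strictly-left $\mathrm{Geom}(1/2)$ path $(\tilde B_m)_{m=0}^{n}$ from $z_1$ to $z_2$ carrying the same weight, and it matches the hitting event $\{\exists\, m\in\{0,\ldots,n-1\}:B^*_m>X_0(n-m)\}$ with $\{\tau<n\}$ for the walk $\tilde B$ in the notation of \eqref{eq:deftau}. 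Hence, for $z_2\leq X_0(n)$, $G_{0,n}(z_1,z_2)=\pp_{B_0=z_1}(\tau<n,\,B_n=z_2)$, and the strong Markov property at $\tau$ rewrites this as $\ee_{B_0=z_1}\big[\1{\tau<n}\,Q^{n-\tau}(B_\tau,z_2)\big]$.

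Splitting on whether $z_1>X_0(1)$ completes the identification. If $z_1>X_0(1)$ then $\tau=0$ and $B_\tau=z_1$ deterministically, and the ordering $X_0(1)>X_0(2)>\cdots>X_0(n)\geq z_2$ with integer gaps forces $z_1-z_2\geq n$, so $Q^n(z_1,z_2)=\bar{Q}^{(n)}(z_1,z_2)$, matching the first term on the right-hand side (the second vanishes by its indicator). If $z_1\leq X_0(1)$ the first term vanishes, and on $\{\tau<n\}$ the same spacing argument gives $B_\tau\geq X_0(\tau+1)+1\geq X_0(n)+(n-\tau)\geq z_2+(n-\tau)$, so $Q^{n-\tau}(B_\tau,z_2)=\bar{Q}^{(n-\tau)}(B_\tau,z_2)$ pointwise on that event and the two expectations agree. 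I expect the most delicate step to be the path-reversal identification of events: one must line up indices carefully so that the $B^*$-hitting event exactly corresponds to $\{\tau<n\}$ for the reversed walk, and then justify the replacement of $Q^{n-\tau}$ by $\bar{Q}^{(n-\tau)}$ uniformly on the event $\{\tau<n\}\cap\{z_2\leq X_0(n)\}$.
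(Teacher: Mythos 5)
Your proof is correct and follows essentially the same strategy as the paper's: verify the identity on the infinite set $\{z_2\leq X_0(n)\}$ via the hitting-time representation (path reversal plus the strong Markov property at $\tau$), then extend to all $z_2\in\Z$ by observing that both sides are $2^{z_2}$ times polynomials of degree at most $n-1$ in $z_2$. Your explicit spacing bound $B_\tau-z_2\geq n-\tau$ to justify $Q^{n-\tau}=\bar{Q}^{(n-\tau)}$ on the event $\{\tau<n\}\cap\{z_2\leq X_0(n)\}$, and your direct appeal to the polynomial degree of $2^{-z_2}h^n_k(0,z_2)$ for the left-hand side (rather than deriving it from the corresponding property of $K^{(n)}_t$ as the paper does), are minor streamlinings of the same argument.
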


\begin{proof}
For $z_2\leq X_0(n)$, the expression in \eqref{eq:G-formula} can be written as
\begin{multline}\label{eq:G0napp1}
G_{0,n}(z_1,z_2)=\pp_{B^*_{-1}=z_2}\big(\tau^{0,n}\le n-1,\,B^*_{n-1}=z_1\big)=\pp_{B_{0}=z_1}\big(\tau\leq n-1,B_{n}=z_2\big)\\
 =\sum_{k=0}^{n-1}\sum_{z>X_0(k+1)}\pp_{B_{0}=z_1}\big(\tau=k,\,B_{k}=z\big)Q^{n-k}(z,z_2)
 =\ee_{B_0=z_1}\!\left[Q^{n-\tau}\big(B_{\tau},z_2\big)\1{\tau<n}\right].
\end{multline}
The last expectation is straightforward to compute if $z_1>X_0(1)$, and we get
\begin{equation}
G_{0,n}(z_1,z_2)=\1{z_1>X_0(1)}Q^n(z_1,z_2)+\1{z_1\leq X_0(1)}\ee_{B_0=z_1}\!\left[Q^{n-\tau}\big(B_{\tau},z_2\big)\1{\tau<n}\right]
\end{equation}
for all $z_2\leq X_0(n)$.
Let us now denote
\[
\wt G_{0,n}(z_1,z_2)=\1{z_1>X_0(1)}\bar{Q}^{(n)}(z_1,z_2)+\1{z_1\leq X_0(1)}\ee_{B_0=z_1}\!\left[\bar{Q}^{(n-\tau)}\big(B_{\tau},z_2\big)\1{\tau<n}\right].
\]
We claim that $\wt G_{0,n}(z_1,z_2)=G_{0,n}(z_1,z_2)$ for all $z_2\leq X_0(n)$.
To see this,  note that 
\[
\P_{X_0(1)}\bar{Q}^{(n)}\bar\P_{X_0(n)}=\P_{X_0(1)}{Q^n}\bar\P_{X_0(n)},
\]
thanks to the properties proved in the exercise above. For the other term, the last equality in \eqref{eq:G0napp1} shows that we only need to check that $\P_{X_0(k+1)}\bar{Q}^{(k+1)}\bar\P_{X_0(n)}=\P_{X_0(k+1)}{Q^{k+1}}\bar\P_{X_0(n)}$ for $k=0,\ldots,n-1$, which follows again from the same fact proved in the exercise.
To complete the proof, recall that, by Theorem~\ref{eq:extKernelProbBFPS}, $K^{(n)}_t$ satisfies the following:  for every fixed $z_1$, $2^{-z_2}K_t(z_1,z_2)$ is a polynomial of degree $n-1$ in $z_2$.
It is easy to check that this implies that $G_{0,n}=Q^nR^{-1}K_tR$ satisfies the same.
Since $\bar{Q}^{(k)}$ also satisfies this property for each $k=0,\ldots,n$, we deduce that $2^{-z_2}\wt G_{0,n}(z_1,z_2)$ is a polynomial in $z_2$.
Since it coincides with $2^{-z_2}G_{0,n}(z_1,z_2)$ at infinitely many $z_2$'s, we deduce that $\wt G_{0,n}=G_{0,n}$.  
\end{proof}

In order to have a lighter notation, we define the following kernels
\begin{align}
 \SM_{-t,-n}(z_1,z_2) &:= (e^{-\frac{t}2 \nabla^-} Q^{-n})^*(z_1,z_2)  = \frac{1}{2\pi\I} \oint_{\Gamma_0}dw\, \frac{(1-w)^{n}}{2^{z_2-z_1} w^{n +1 + z_2 - z_1}}e^{t(w-1 / 2)},\label{def:sm}\\
 \SN_{-t,n} (z_1,z_2) &:=  \bar{Q}^{(n)}e^{\frac{t}2 \nabla^-} (z_1,z_2)= \frac{1}{2 \pi \I} \oint_{\Gamma_{0}} dw\,\frac{(1-w)^{z_2-z_1 + n - 1}}{2^{z_1-z_2} w^{n}} e^{t(w - 1 / 2)}.\label{def:sn}
\end{align}

\begin{exercise}  
Show that these operators indeed are given by the contour integrals.
\end{exercise}

\noindent Furthermore, we define the following function
\begin{equation}
\bar{\SN}_{-t,n}^{\epi(X_0)}(z_1,z_2) = \ee_{B_0=z_1}\!\left[ \SN_{-t,n - \tau}(B_{\tau}, z_2)\1{\tau<n}\right],
\end{equation}
where the superscript epi refers to the fact that $\tau$ (defined in \eqref{eq:deftau}) is the hitting time of the epigraph of the curve $\big(X_0(k+1)+1\big)_{k=0,\ldots,n-1}$ by the random walk $B_k$. With these operators at hand we have the following formula for TASEP with general right-finite initial data:

\begin{theorem}[TASEP formula for right-finite initial data]\label{thm:tasepformulas}
 Assume that initial values satisfy $X_0(j)=\infty$ for $j\le 0$. Then for  $1\leq n_1<n_2<\dotsm<n_M$ and $t>0$ we have
\begin{equation}\label{eq:extKernelProb}
  \pp\!\left(X_t(n_j)>a_j,~j=1,\ldots,M\right)=\det \bigl(I-\bar\chi_a K_t\bar\chi_a\bigr)_{\ell^2(\{n_1,\ldots,n_M\}\times\Z)},
\end{equation}
where the kernel $K_t$ is given by
\begin{equation}\label{eq:Kt-2}
K_t(n_i,\cdot;n_j,\cdot)=-Q^{n_j-n_i}\1{n_i<n_j}+(\SM_{-t,-{n}_i})^*\bar{\SN}_{-t,n_j}^{\epi(X_0)}.
\end{equation}
\end{theorem}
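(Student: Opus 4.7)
The plan is to assemble formula~\eqref{eq:Kt-2} out of the ingredients already established in this section. The Fredholm determinant identity~\eqref{eq:extKernelProb} is literally~\eqref{eq:extKernelProbBFPS} from Theorem~\ref{thm:BFPS}, so the whole content of the theorem is the rewriting of the kernel~\eqref{eq:Kt_new} in the form~\eqref{eq:Kt-2}. By the exercise preceding subsection~\ref{subsec:hitting} one has $K_t(n_i,\cdot;n_j,\cdot)=Q^{n_j-n_i}\bigl(-\1{n_i<n_j}+K_t^{(n_j)}\bigr)$, so it is enough to identify the diagonal block $K_t^{(n)}$ with $(\SM_{-t,-n})^*\bar\SN_{-t,n}^{\epi(X_0)}$ and then absorb the leftover $Q^{n_j-n_i}$ prefactor into $\SM$ via
\[
Q^{n_j-n_i}(\SM_{-t,-n_j})^{*}=Q^{n_j-n_i}e^{-\frac{t}{2}\nabla^-}Q^{-n_j}=e^{-\frac{t}{2}\nabla^-}Q^{-n_i}=(\SM_{-t,-n_i})^{*},
\]
which is immediate from the commutativity of $Q$ with $e^{-\frac{t}{2}\nabla^-}$ and the convolution law for $Q$.

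For the diagonal piece I would start from~\eqref{eq:Kt-decomp}; this identity itself is obtained by inserting the representation $\Psi^n_k=e^{-\frac{t}{2}(I+\nabla^-)}Q^{-k}\delta_{X_0(n-k)}$ and the formula~\eqref{eq:defPhink} from Theorem~\ref{thm:h_heat} into the biorthogonal sum in~\eqref{eq:Kt_new} with $n_1=n_2=n$, and then factoring out a common $Q^{-n}$ by writing $Q^{-k}=Q^{-n}Q^{n-k}$ so that the $k$-sum reconstructs $G_{0,n}$. The scalar factors in $e^{\pm\frac{t}{2}(I+\nabla^-)}=e^{\pm t/2}e^{\pm\frac{t}{2}\nabla^-}$ cancel pairwise, leaving $K_t^{(n)}=e^{-\frac{t}{2}\nabla^-}Q^{-n}\,G_{0,n}\,e^{\frac{t}{2}\nabla^-}$. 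Substituting Lemma~\ref{lem:G0n-formula} and noting that under the right-finiteness hypothesis its two cases collapse into the single expression
\[
G_{0,n}(z_1,z_2)=\ee_{B_0=z_1}\!\left[\bar{Q}^{(n-\tau)}(B_\tau,z_2)\1{\tau<n}\right]
\]
(since $z_1>X_0(1)$ forces $\tau=0$ and $B_\tau=z_1$), and then pushing the outer $e^{\frac{t}{2}\nabla^-}$ through the expectation converts $\bar Q^{(n-\tau)}e^{\frac{t}{2}\nabla^-}$ into $\SN_{-t,n-\tau}$; hence $G_{0,n}\,e^{\frac{t}{2}\nabla^-}=\bar\SN_{-t,n}^{\epi(X_0)}$. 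Recognising the remaining prefactor $e^{-\frac{t}{2}\nabla^-}Q^{-n}$ as $(\SM_{-t,-n})^*$ by definition finishes the identification of $K_t^{(n)}$.

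The principal technical nuisance will be justifying the interchange of the outer $e^{\frac{t}{2}\nabla^-}$ with the random-walk expectation and, more generally, verifying associativity of the various kernel products that appear (in particular the implicit $Q^{-n}\bar{Q}^{(n-k)}$ reassembly). These are underwritten by the super-exponential decay of $e^{\pm\frac{t}{2}\nabla^-}(x,y)$ in $|x-y|$ and by the fact that $B_m$ takes only strictly-leftward jumps before $\tau$, which makes every sum encountered absolutely convergent, so that Fubini applies freely. The right-finiteness hypothesis is precisely what allows the boundary case $z_1>X_0(1)$ to be handled uniformly through the convention $\tau=0$, $B_\tau=z_1$, and thereby lets one write the final answer in the clean block form~\eqref{eq:Kt-2}. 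Once those analytic points are settled, the remaining manipulations are purely algebraic and follow from the chain of identities sketched above.
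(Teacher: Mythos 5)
Your algebraic identification of $K_t^{(n)}$ with $(\SM_{-t,-n})^*\bar\SN_{-t,n}^{\epi(X_0)}$ and the absorption of $Q^{n_j-n_i}$ into $\SM$ are both correct and spell out exactly the chain of identities the paper invokes when it says the result ``follows directly from the above definitions together with~\eqref{eq:Kt-decomp} and Lemma~\ref{lem:G0n-formula}.'' The observation that the two cases in Lemma~\ref{lem:G0n-formula} collapse into a single expectation (because $z_1>X_0(1)$ forces $\tau=0$, $B_\tau=z_1$) is a valid, if optional, simplification. However, your attribution of that collapse to the right-finiteness hypothesis is off target: the collapse holds for the lemma unconditionally and has nothing to do with the condition $X_0(j)=\infty$ for $j\le 0$. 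That hypothesis plays a different and essential role, which you have not addressed.

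The genuine gap is the following. The stated hypothesis permits $X_0(1)=\infty$, and more generally $X_0(j)=\infty$ for $j=1,\ldots,\ell$ with $X_0(\ell+1)<\infty$. In that situation all of the preceding machinery—the definition of $\Psi^n_k$ in~\eqref{eq:defPsi}, the boundary value problem~\eqref{bhe} for $h^n_k$, hence Theorem~\ref{thm:h_heat}, the decomposition~\eqref{eq:Kt-decomp}, and Lemma~\ref{lem:G0n-formula}—does not directly apply, because they reference $X_0(n-k)$ at labels that may be infinite. Your proof silently assumes $X_0(1)<\infty$. The paper handles the remaining case by writing the distribution in terms of the shifted data $\theta_\ell X_0(j)=X_0(\ell+j)$, which brings you back to the covered case, producing a kernel $-Q^{n_j-n_i}\1{n_i<n_j}+(\SM_{-t,-n_i+\ell})^*\bar\SN_{-t,n_j-\ell}^{\epi(\theta_\ell X_0)}$, and then uses the identity $Q^{\ell}\bar\SN_{-t,n_j-\ell}^{\epi(\theta_\ell X_0)}=\bar\SN_{-t,n_j}^{\epi(X_0)}$ together with the commutation $Q^{\ell}(\SM_{-t,-n+\ell})^*=(\SM_{-t,-n})^*Q^{\ell}$ to show the two expressions agree. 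You need to include this shift argument (or explicitly restrict the statement you are proving to $X_0(1)<\infty$) for the proof to cover the theorem as stated.
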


\begin{proof}
If $X_0(1)<\infty$ then we are in the setting of the above sections.
Formulas \eqref{eq:extKernelProb} and \eqref{eq:Kt-2} follow directly from the above definitions together with \eqref{eq:Kt-decomp} and Lemma~\ref{lem:G0n-formula}.
If $X_0(j)=\infty$ for $j=1,\ldots,\ell$ and $X_0(\ell+1)<\infty$ then 
\[
\pp_{X_0}\big(X_t(n_j)>a_j,~j=1,\ldots,M\big)=\det\!\big(I-\bar\chi_aK^{(\ell)}_t\bar\chi_a\big)_{\ell^2(\{n_1,\ldots,n_M\}\times\Z)}
\]
 with the correlation kernel
 \[
 K^{(\ell)}_t(n_i,\cdot;n_j,\cdot)=-Q^{n_j-n_i}\1{n_i<n_j}+(\SM_{-t,-{n}_i+\ell})^*{\SN}_{-t,n_j-\ell}^{\epi(\theta_\ell X_0)},
 \]
 where $\theta_\ell X_0(j) = X_0(\ell + j)$. Using now the fact that $Q^{\ell}\bar{\SN}_{-t,n_j-\ell}^{\epi(\theta_\ell X_0)}={\SN}_{-t,n_j}^{\epi(X_0)}$ and \eqref{def:sm} we conclude that \eqref{eq:Kt-2} still holds in this case.
\end{proof}

\begin{remark}  One can write a formula for initial data which are not right finite~\cite{KPZ}, but they are a bit more cumbersome.  In practice,
one cuts of the data very far to the right, and uses the formula above.  Since one has exact formulas, one can check that cutting off has a small effect.
\end{remark}

For some special initial data one can get simpler expressions for the correlation kernel~\cite{KPZ} and we can recover the formulas from \cite{borFerPrahSasam}, \cite{ferrariMatr} and \cite{bfp}. We leave these computations as exercises below.

\begin{exercise}
Consider TASEP with step initial data, $X_0(i) = -i$ for $i \geq 1$ and show that
\[
K_t(n_i,z_1;n_j,z_2)=-Q^{n_j-n_i}(z_1, z_2)\uno{n_i<n_j} + \frac{1}{(2\pi\I)^2} \oint_{\Gamma_0}dw \oint_{\Gamma_0}dv\, \frac{(1-w)^{n_i} (1-v)^{n_j + z_2}}{2^{z_1-z_2} w^{n_i + z_1 +1} v^{n_j}} \frac{e^{t(w + v-1)}}{1 - v - w}.
\]
\end{exercise}

\begin{exercise}$\!\!\!\!{}^*$
Consider TASEP with periodic initial data $X_0(i)=2i$, $i\in\Z$ and show that 
\[K^{(n)}_t(z_1,z_2)=-\frac1{2\pi\I}\oint_{1+\Gamma_{0}}dv\, \frac{v^{z_2+2n}}{2^{z_1-z_2}(1-v)^{z_1+2n+1}}\, e^{t(1-2v)}.\]
(Hint:  approximate by finite periodic initial data $X_0(i) = 2(N-i)$ for $i=1,\ldots,2N$.)
\end{exercise}

\subsection{Path integral formulas.}

In addition to the extended kernel formula \eqref{eq:extKernelProbBFPS}, one has a \emph{path integral formula}
\begin{equation}
\det \left(I-K^{(n_m)}_{t} \bigl(I-Q^{n_1-n_m}\P_{a_1}Q^{n_2-n_1}\P_{a_2}\dotsm Q^{n_m-n_{m-1}}\P_{a_m}\bigr)\right)_{L^2(\Z)},\label{eq:path-int-kernel-TASEPgem}
\end{equation}
where as before $K^{(n)}_t(z_1,z_2)=K_t(n,z_1;n,z_2)$. Such formulas were first obtained in \cite{prahoferSpohn} for the Airy$_2$ process (see \cite{prolhacSpohn} for the proof), and later was extended to the Airy$_1$ process in \cite{quastelRemAiry1} and then to a very wide class of processes in \cite{bcr}. We provide this result below in full generality.

For $t_1<t_2<\dotsm<t_n$ we consider an extended kernel $K^\uptext{ext}$ given as follows: for $1\leq i,j\leq n$ and $x,y\in X$ (here $(X,\mu)$ is a given measure space),
\begin{equation}\label{eq:generalExt}
  K^\uptext{ext}(t_i,x;t_j,y)=
  \begin{dcases*}
    \mathcal{W}_{t_i,t_j}K_{t_j}(x,y), & if $i\geq j$,\\
    -\mathcal{W}_{t_i,t_j}(I-K_{t_j})(x,y), & if $i<j$.
  \end{dcases*}
\end{equation}
Additionally, we are considering multiplication operators $\Ml_{t_i}$ acting on a measurable function $f$ on $X$ as $\Ml_{t_i}f(x)=\varphi_{t_i}(x)f(x)$ for some measurable function $\varphi_{t_i}$ defined on $X$.
$M$ will denote the diagonal operator acting on functions $f$ defined on $\{t_1,\ldots,t_n\}\!\times\!X$ as $Mf(t_i,\cdot)=\Ml_{t_i}f(t_i,\cdot)$.

We provide below all the assumptions from \cite{bcr} except their Assumption 2(iii) which has to be changed in our case.

\begin{assumption}\label{assum:1}
There are integral operators $Q_t$ on $L^2(X)$ such that the following hold:
  \begin{enumerate}[label=(\roman*)]
  \item The integral operators $Q_{t_i}\mathcal{W}_{t_i,t_j}$, $Q_{t_i}K_{t_i}$,
  $Q_{t_i}\mathcal{W}_{t_i,t_j}K_{t_j}$ and $Q_{t_j}\mathcal{W}_{t_j,t_i}K_{t_i}$ for $1\leq i<j\leq n$
  are all bounded operators mapping $L^2(X)$ to itself.
  \item The operator $K_{t_1}-\bar Q_{t_1}\mathcal{W}_{t_1,t_2}\bar Q_{t_2}\cdots\mathcal{W}_{t_{n-1},t_n}\bar Q_{t_n}
  \mathcal{W}_{t_n,t_1}K_{t_1}$, where $\bar Q_{t_i}=I-Q_{t_i}$, is a bounded operator mapping
  $L^2(X)$ to itself.
  \end{enumerate}
\end{assumption}

\begin{assumption}\label{assum:2}
  For each $i\leq j\leq k$ the following hold:
    \begin{enumerate}[label=(\roman*)]
    \item \emph{Right-invertibility}: $\mathcal{W}_{t_i,t_j}\mathcal{W}_{t_j,t_i}K_{t_i}=K_{t_i}$;
    \item \emph{Semigroup property}: $\mathcal{W}_{t_i,t_j}\mathcal{W}_{t_j,t_k}=\mathcal{W}_{t_i,t_k}$;
    \item \emph{Reversibility relation}: $\mathcal{W}_{t_i,t_j}K_{t_{j}}\mathcal{W}_{t_j,t_{i}}=K_{t_{i}}$, for all $t_i<t_j$.
    \end{enumerate}
 \end{assumption}
 
 \begin{assumption}\label{assum:3}
  One can choose multiplication operators $V_{t_i}$, $V'_{t_i}$, $U_{t_i}$ and $U'_{t_i}$
  acting on $\CM(X)$, for $1\leq i\leq n$, in such a way that:
  \begin{enumerate}[label=(\roman*)]
  \item $V_{t_i}'V_{t_i}Q_{t_i}=Q_{t_i}$  and $K_{t_i}U_{t_i}'U_{t_i}=K_{t_i}$, for all $1\leq i\leq n$.
  \item The operators $V_{t_i}Q_{t_i}K_{t_i}V_{t_i}'$, $V_{t_i}Q_{t_i}\mathcal{W}_{t_i,t_j}V_{t_j}'$,
    $V_{t_i}Q_{t_i}\mathcal{W}_{t_i,t_j}K_{t_j}V_{t_j}'$ and
    $V_{t_j}Q_{t_j}\mathcal{W}_{t_j,t_i}K_{t_i}V_{t_i}'$ preserve $L^2(X)$ and are trace class in $L^2(X)$, for all $1\leq i<j\leq n$.
  \item The operator
    $U_{t_{i}}\!\left[\mathcal{W}_{t_{i},t_1}K_{t_1}-\bar{Q}_{t_{i}}\mathcal{W}_{t_{i},t_{i+1}}\dotsm
      \bar{Q}_{t_{n-1}}\mathcal{W}_{t_{{n-1}},t_{n}}\bar{Q}_{t_{n}}\mathcal{W}_{t_{n},t_1}K_{t_1}\right]U_{t_1}'$ preserves
    $L^2(X)$ and is trace class in $L^2(X)$, for all $1\leq i\leq n$, where
    $\bar{Q}_{t_i}=I-Q_{t_i}$.
  \end{enumerate}
\end{assumption}

We are assuming here that $\mathcal{W}_{t_i,t_j}$ is invertible for all $t_i\leq t_j$, so that $\mathcal{W}_{t_j,t_i}$ is defined as a proper operator\footnote{This is just for simplicity; it is possible to state a version of Theorem~\ref{thm:alt-extendedToBVP} asking instead that the product $K_{t_j}\mathcal{W}_{t_j,t_i}$ be well defined.}.
Moreover, we assume that it satisfies
\begin{equation}
  \mathcal{W}_{t_j,t_i}K_{t_{i}}=K^\uptext{ext}(t_j,\cdot;t_i,\cdot)
\end{equation}
for all $t_i\geq t_j$, and that the multiplication operators $U_{t_i},U_{t_i}'$ introduced in Assumption~\ref{assum:3} satisfy Assumption~\ref{assum:3}(iii) with the operator in that assumption replaced by
\[U_{t_i}\left[\mathcal{W}_{t_{i},t_{i+1}}\oM_{t_{i+1}}
    \dotsm\mathcal{W}_{{t_{n-1}},{t_{n}}}\oM_{{t_{n}}}K_{t_n}-\mathcal{W}_{t_{i},{t_1}}\oM_{{t_1}}\mathcal{W}_{{t_1},{t_2}}\oM_{{t_2}}\dotsm\mathcal{W}_{{t_{n-1}},{t_{n}}}\oM_{{t_{n}}}K_{t_n}\right]U_{t_i}'.\]

\begin{theorem}\label{thm:alt-extendedToBVP}
  Under the assumptions above, we have the identity
  \begin{equation}
    \det\!\big(I-\Ml K^\uptext{ext}\big)_{L^2(\{t_1,\dots,t_n\}\times X)}
    =\det\!\big(I-K_{t_n}+K_{t_n}\mathcal{W}_{t_n,t_1}\oM_{t_1}\mathcal{W}_{t_1,t_2}\oM_{t_2}\dotsm\mathcal{W}_{t_{n-1},t_n}\oM_{t_n}\big)_{L^2(X)},
  \end{equation}
  where $\oM_{t_i}=I-\Ml_{t_i}$.
\end{theorem}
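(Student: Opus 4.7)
The plan is to reduce the block Fredholm determinant on $L^2(\{t_1,\dots,t_n\}\times X)$ to a single-space determinant on $L^2(X)$ in three moves: a conjugation that strips the propagators out of the off-diagonal blocks, a Weinstein--Aronszajn (Sylvester) identity that collapses the part of the kernel that is rank one in the time direction, and an explicit backwards recursion identifying the surviving operator. The two cases of \eqref{eq:generalExt} can be written uniformly as $K^{\mathrm{ext}}(t_i,\cdot;t_j,\cdot)=\mathcal{W}_{t_i,t_j}K_{t_j}-\mathcal{W}_{t_i,t_j}\mathbf{1}_{\{i<j\}}$. Using the semigroup property (Assumption~\ref{assum:2}(ii)) together with the reversibility relation (Assumption~\ref{assum:2}(iii)) in the form $\mathcal{W}_{t_n,t_j}K_{t_j}=K_{t_n}\mathcal{W}_{t_n,t_j}$, and writing $W_i:=\mathcal{W}_{t_i,t_n}$, this becomes
\begin{equation*}
K^{\mathrm{ext}}(t_i,\cdot;t_j,\cdot)=W_i\bigl(K_{t_n}-\mathbf{1}_{\{i<j\}}I\bigr)W_j^{-1}.
\end{equation*}

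Let $\mathbf{W}=\mathrm{diag}(W_1,\dots,W_n)$ and $\tilde{M}_{t_i}:=\mathcal{W}_{t_n,t_i}\Ml_{t_i}\mathcal{W}_{t_i,t_n}$. By cyclic invariance of the Fredholm determinant, $\det(I-\Ml K^{\mathrm{ext}})=\det(I-\tilde{\Ml}K^{\mathrm{red}})$, where $\tilde{\Ml}$ is block diagonal with entries $\tilde M_{t_i}$ and $K^{\mathrm{red}}$ has $(i,j)$-block $K_{t_n}-\mathbf{1}_{\{i<j\}}I$. Decompose $K^{\mathrm{red}}=\iota K_{t_n}\iota^{*}-J$, where $\iota:L^2(X)\to L^2(\{t_i\}\times X)$ duplicates a function into every time slice, $\iota^{*}$ sums over the slices, and $J$ has $(i,j)$-block $I\cdot\mathbf{1}_{\{i<j\}}$. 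Because $\tilde{\Ml}J$ is strict block upper triangular, $I+\tilde{\Ml}J$ is invertible with Fredholm determinant $1$, and the Weinstein--Aronszajn identity applied to $I-\tilde{\Ml}K^{\mathrm{red}}=(I+\tilde{\Ml}J)-(\tilde{\Ml}\iota)(K_{t_n}\iota^{*})$ yields
\begin{equation*}
\det(I-\Ml K^{\mathrm{ext}})=\det\!\bigl(I-K_{t_n}\,\iota^{*}(I+\tilde{\Ml}J)^{-1}\tilde{\Ml}\,\iota\bigr)_{L^2(X)}.
\end{equation*}

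To identify the operator in parentheses, for $v\in L^2(X)$ solve $(I+\tilde{\Ml}J)w=\tilde{\Ml}\iota v$ componentwise: $w_i+\tilde M_{t_i}\sum_{j>i}w_j=\tilde M_{t_i}v$. With $S_i:=\sum_{j\ge i}w_j$ and $\bar{\tilde M}_{t_i}:=I-\tilde M_{t_i}$ this becomes $S_i=\tilde M_{t_i}v+\bar{\tilde M}_{t_i}S_{i+1}$, $S_{n+1}=0$, whose solution is $S_1=(I-\bar{\tilde M}_{t_1}\cdots\bar{\tilde M}_{t_n})v$, so $\iota^{*}(I+\tilde{\Ml}J)^{-1}\tilde{\Ml}\iota=I-\bar{\tilde M}_{t_1}\cdots\bar{\tilde M}_{t_n}$. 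Since $\bar{\tilde M}_{t_i}=\mathcal{W}_{t_n,t_i}\oM_{t_i}\mathcal{W}_{t_i,t_n}$, repeated application of $\mathcal{W}_{t_i,t_n}\mathcal{W}_{t_n,t_{i+1}}=\mathcal{W}_{t_i,t_{i+1}}$ telescopes the product:
\begin{equation*}
\bar{\tilde M}_{t_1}\cdots\bar{\tilde M}_{t_n}=\mathcal{W}_{t_n,t_1}\oM_{t_1}\mathcal{W}_{t_1,t_2}\oM_{t_2}\cdots\mathcal{W}_{t_{n-1},t_n}\oM_{t_n}.
\end{equation*}
Substituting back produces $\det(I-K_{t_n}+K_{t_n}\mathcal{W}_{t_n,t_1}\oM_{t_1}\cdots\mathcal{W}_{t_{n-1},t_n}\oM_{t_n})_{L^2(X)}$, which is the claim.

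The main obstacle is not algebraic but analytic: the operators $W_i=\mathcal{W}_{t_i,t_n}$ for $t_i<t_n$ (equivalently their inverses $\mathcal{W}_{t_n,t_i}$) are generally unbounded on $L^2(X)$, so the conjugation, the Sylvester manipulation, and the trace-class interpretation of the intermediate determinants all need careful justification. This is precisely what Assumption~\ref{assum:1} (boundedness of the relevant products), Assumption~\ref{assum:2} (the algebraic semigroup/reversibility compatibility used in the rewriting above), and the multiplication operators $V_{t_i},V'_{t_i},U_{t_i},U'_{t_i}$ of Assumption~\ref{assum:3} are designed for: by inserting $V'V=\mathrm{id}$ on the range of $Q$ and $U'U=\mathrm{id}$ on the domain of $K$ at each intermediate step, one reduces every product to a trace-class operator on $L^2(X)$, legitimizing each determinant equality and giving a genuine identity of Fredholm determinants.
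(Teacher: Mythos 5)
Your argument is correct and reaches the stated identity, but it is a genuinely different route from the paper's. The paper (following a minor adaptation of \cite{bcr}) works with the triangular decomposition $\sW^\pm$ of the full propagator block matrix, computes $(\sW^-)^{-1}$, shows that $(\sW^-+\sW^+)\sK^{\rm d}(\sW^-)^{-1}$ has only its last column non-zero (this is where reversibility is used, via $\mathcal{W}_{i,j+1}K_{j+1}\mathcal{W}_{j+1,j}=\mathcal{W}_{i,j}K_j$), factors out an upper-triangular determinant-one piece and cycles, and finally evaluates the surviving $(n,n)$-block by an inclusion--exclusion summation. You instead conjugate the entire block matrix by $\mathrm{diag}(\mathcal{W}_{t_i,t_n})$ to strip every propagator at once, reducing to the kernel $K_{t_n}-\mathbf 1_{\{i<j\}}I$, which you split as a rank-one-in-time piece $\iota K_{t_n}\iota^*$ minus the strictly upper-triangular $J$; the Weinstein--Aronszajn identity then collapses the determinant to $L^2(X)$ in one step, and the surviving operator is identified by a short backward recursion. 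Both arguments rest on exactly the same group/reversibility algebra and both are careful (correctly so) to only ever form the backward products $K_{t_n}\mathcal{W}_{t_n,t_j}$ rather than the divergent forward products $K_{t_j}\mathcal{W}_{t_j,t_n}$ that make BCR's Assumption 2(iii) fail in the TASEP application. What your route buys is transparency: the rank-one structure in the time direction, which is the structural reason the determinant drops to $L^2(X)$, is made explicit, and the telescoping recursion for $S_i=\tilde M_{t_i}v+\bar{\tilde M}_{t_i}S_{i+1}$ is cleaner than the inclusion--exclusion sum. What the paper's route buys is a slightly tighter control over which operator products are ever formed: your intermediate objects $\tilde M_{t_i}=\mathcal{W}_{t_n,t_i}\Ml_{t_i}\mathcal{W}_{t_i,t_n}$ and the cycling of $\mathbf W^{-1}(\cdot)\mathbf W$ inside the Fredholm determinant need the same $U_{t_i},V_{t_i}$ conjugations of Assumption~\ref{assum:3} that the paper invokes but omits, so the two sketches are at a comparable level of rigor; you acknowledge this, and it is the honest place to acknowledge it.
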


\begin{proof}
  The proof is a minor adaptation of the arguments in \cite[Thm. 3.3]{bcr}, and we will use throughout it all the notation and conventions of that proof.
  We will just sketch the proof, skipping several technical details (in particular, we will completely omit the need to conjugate by the operators $U_{t_i}$ and $V_{t_i}$, since this aspect of the proof can be adapted straightforwardly from \cite{bcr}).

  In order to simplify notation throughout the proof we will replace subscripts of the form $t_i$ by $i$, so for example $\mathcal{W}_{i,j}=\mathcal{W}_{t_i,t_j}$.
  Let $\sK=\Ml K^\uptext{ext}$. Then $\sK$ can
  be written as
  \begin{equation}
    \sK=\sQ \bigl(\sW^{-}\sK^{\rm d}+\sW^{+}(\sK^{\rm d}-I)\bigr)\qquad\text{with}\qquad
    \sK^{\rm d}_{ij}=K_{i}\uno{i=j},\quad \sQ_{i,j}=\Ml_{{i}}\uno{i=j},
  \end{equation}
  where $\sW^{-}$, $\sW^{+}$ are lower triangular, respectively strictly upper triangular, and defined by
  \begin{equation*}
    \sW^{-}_{ij} = \mathcal{W}_{i,j}\uno{i\geq j},\qquad
    \sW^{+}_{ij}=\mathcal{W}_{{i},{j}}\uno{i < j}.
  \end{equation*}
  The key to the proof in \cite{bcr} was to observe that $\big[(I+\sW^{+})^{-1})\big]_{i,j}=\uno{j=i}-\mathcal{W}_{{i},{i+1}}\uno{j=i+1}$, which then implies that $\big[(\sW^{-}+\sW^{+})\sK^{\rm d}(I+\sW^{+})^{-1}\big]_{i,j}=\mathcal{W}_{{i},{1}}K_{1}\uno{j=1}$.
  The fact that only the first column of this matrix has non-zero entries is what ultimately allows one to turn the Fredholm determinant of an extended kernel into one of a kernel acting on $L^2(X)$.
  However, the derivation of this last identity uses $\mathcal{W}_{i,{j-1}}K_{j-1}\mathcal{W}_{{j-1},j}=\mathcal{W}_{i,j}K_{j}$, which is a consequence of Assumptions~\ref{assum:2}(ii) and (iii), and thus is not available to us. In our case we may proceed similarly by observing that
  \begin{equation}
    \big[(\sW^{-})^{-1}\big]_{i,j}=\uno{j=i}-\mathcal{W}_{{i},{i-1}}\uno{j=i-1},
  \end{equation}
  as can be checked directly using Assumption~\ref{assum:2}(ii).
  Now using the identity \[\mathcal{W}_{i,{j+1}}K_{j+1}\mathcal{W}_{{j+1},j}=\mathcal{W}_{i,j}K_{j},\] which follows from Assumption~\ref{assum:2}(ii) and (iii), we get
  \begin{equation}
      \big[(\sW^{-}+\sW^{+})\sK^{\rm d}(\sW^{-})^{-1}\big]_{i,j}
      =\mathcal{W}_{i,j}K_{j}-\mathcal{W}_{i,{j+1}}K_{j+1}\mathcal{W}_{{j+1},j}\uno{j<n}
      =\mathcal{W}_{{i},{n}}K_{n}\uno{j=n}.\label{eq:T-T+}
  \end{equation}
  Note that now only the last column of this matrix has non-zero entries, which accounts for the difference between our result and that of \cite{bcr}. 
  To take advantage of \eqref{eq:T-T+} we write 
  \[
  I-\sK=(I+\sQ\sW^+)\big[I-(I+\sQ\sW^+)^{-1}\sQ(\sW^-+\sW^+)\sK^{\rm d}(\sW^-)^{-1}\sW^-\big].
  \]
  Since $\sQ\sW^+$ is strictly upper triangular, $\,\det(I+\sQ\sW^+)=1$, which in particular shows that $I+\sQ\sW^+$ is invertible. Thus by the cyclic property of the Fredholm determinant, $\,\det(I-\sK)=\det(I-\wt\sK)$ with
  \[\wt\sK=\sW^-(I+\sQ\sW^+)^{-1}\sQ(\sW^-+\sW^+)\sK^{\rm d}(\sW^-)^{-1}.\]
  Since only the last column of $(\sW^-+\sW^+)\sK^{\rm d}(\sW^-)^{-1}$ is non-zero, the same holds for $\wt\sK$, and thus $\,\det(I-\sK)=\det(I-\wt\sK_{n,n})_{L^2(X)}$.

  Our goal now is to compute $\wt\sK_{n,n}$.
  From \eqref{eq:T-T+} and Assumption~\ref{assum:2}(ii) we get, for $0\leq k\leq n-i$,
  \begin{multline*}
    \left[(\sQ\sW^+)^k\sQ(\sW^-+\sW^+)\sK^{\rm d}(\sW^-)^{-1}\right]_{i,n}\\
    =\qquad\smashoperator{\sum_{i<\ell_1<\dots<\ell_k\leq n}}\qquad
    \Ml_{{i}}\mathcal{W}_{i,{\ell_1}}\Ml_{{\ell_1}}\mathcal{W}_{{\ell_{1}},{\ell_2}} \dotsm
    \Ml_{{\ell_{k-1}}}\mathcal{W}_{{\ell_{k-1}},{\ell_k}}\Ml_{{\ell_k}}\mathcal{W}_{{\ell_k},{n}}K_{n},
  \end{multline*}
  while for $k>n-i$ the left-hand side above equals 0 (the case $k=0$ is interpreted as
  $\Ml_{i}\mathcal{W}_{i,n}K_{n}$). 
  As in \cite{bcr} this leads to
  \begin{equation}
    \wt\sK_{i,n}
    =\sum_{j=1}^i\sum_{k=0}^{n-j}(-1)^{k}\quad\smashoperator{\sum_{j=\ell_0<\ell_1<\dots<\ell_k\leq
        n}}\quad\mathcal{W}_{i,j}\Ml_{j}\mathcal{W}_{{j},{{\ell_1}}}\Ml_{{\ell_1}}\mathcal{W}_{{\ell_1},{\ell_2}}
    \Ml_{{\ell_{k-1}}}\mathcal{W}_{{\ell_{k-1}},{\ell_k}}\Ml_{{\ell_k}}\mathcal{W}_{{\ell_k},{n}}K_{n}.
  \end{equation}
  Replacing each $\Ml_{\ell}$ by $I-\oM_{\ell}$ except for the first one and simplifying as in \cite{bcr} leads to
  \[\wt\sK_{i,n}=\mathcal{W}_{{i},{i+1}}\oM_{{i+1}}\mathcal{W}_{{i+1},{i+2}}\oM_{{i+2}}
    \dotsm\mathcal{W}_{{{n-1}},{{n}}}\oM_{{{n}}}K_{n}-\mathcal{W}_{{i},{1}}\oM_{{1}}\mathcal{W}_{{1},{2}}\oM_{{2}}\dotsm\mathcal{W}_{{{n-1}},{{n}}}\oM_{{{n}}}K_{n}.\]
  Setting $i=n$ yields $\wt\sK_{n,n}=K_{n}-\mathcal{W}_{{n},{1}}\oM_{{1}}\mathcal{W}_{{1},{2}}\oM_{{2}}\dotsm\mathcal{W}_{{{n-1}},{{n}}}\oM_{{{n}}}K_{n}$
  and then an application of the cyclic property of the determinant gives the result.
\end{proof}

\subsection{Proof of the TASEP path integral formula.}\label{app:proofPathIntTASEP}

To obtain the path integral version \eqref{eq:path-int-kernel-TASEPgem} of the TASEP formula we use Theorem~\ref{thm:alt-extendedToBVP}.
Recall from \eqref{eq:PsiRecursion} that $Q^{n-m}\Psi^{n}_{n-k}=\Psi^{m}_{m-k}$.
Then we can write
\begin{equation}\label{eq:checkQKK}
  Q^{n_j-n_i}K^{(n_j)}_t=\sum_{k=0}^{n_j-1}Q^{n_j-n_i}\Psi^{n_j}_{k}\otimes \Phi^{n_j}_{k}=\sum_{k=0}^{n_j-1}\Psi^{n_i}_{n_i-n_j+k}\otimes \Phi^{n_j}_{k}=K_t(n_i,\cdot;n_j,\cdot)+Q^{n_j-n_i}\uno{n_i<n_j}.
\end{equation}
This means that the extended kernel $K_t$ has exactly the structure specified in \eqref{eq:generalExt}, taking\break$t_i=n_i$, $K_{t_i}=K^{(n_i)}_t$, $\mathcal{W}_{t_i,t_j}=Q^{n_j-n_i}$ and $\mathcal{W}_{t_i,t_j}K_{t_j}=K_t(n_i,\cdot;n_j,\cdot)$.
It is not hard to check that Assumptions 1 and 3 of \cite[Thm. 3.3]{bcr} hold in our setting.
The semigroup property (Assumption~2(ii)) is trivial in this case, while the right-invertibility condition (Assumption~2(i)) 
\[
Q^{n_j-n_i}K_t(n_j,\cdot;n_i,\cdot)=K_t^{(n_i)}
\] 
for $n_i\leq n_j$ follows similarly to \eqref{eq:checkQKK}.
However, Assumption 2(iii) of \cite{bcr}, which translates into $Q^{n_j-n_i}K_t^{(n_j)}=K_t^{(n_i)}Q^{n_j-n_i}$ for $n_i\leq n_j$, does not hold in our case (in fact, the right hand side does not even make sense as the product is divergent, as can be seen by noting that $\Phi^{(n)}_0(x)=2^{x-X_0(n)}$; alternatively, note that the left hand side depends on the values of $X_0(n_{i+1}),\ldots,X_0(n_j)$ but the right hand side does not), which is why we need Theorem~\ref{thm:alt-extendedToBVP}.
To use it, we need to check that
\begin{equation}
  Q^{n_j-n_i}K_t^{(n_j)}Q^{n_i-n_j}=K_t^{(n_i)}.\label{eq:secondExtKernAssmp}
\end{equation}
In fact, if $k\geq0$ then \eqref{bhe1} together with the easy fact that $h^n_k(\ell,z)=h^{n-1}_{k-1}(\ell-1,z)$ imply that 
\[
(\Qt)^{n_i-n_j}h^{n_j}_{k+n_j-n_i}(0,z)=h^{n_j}_{k+n_j-n_i}(n_j-n_i,z)=h^{n_i}_{k}(0,z), 
\]
so that
$(\Qt)^{n_i-n_j}\Phi^{n_j}_{k+n_j-n_i}=\Phi^{n_i}_k$.
On the other hand, if $n_i-n_j\leq k<0$ then we have 
\[
(\Qt)^{n_i-n_j}h^{n_j}_{k+n_j-n_i}(0,z)=(\Qt)^kh^{n_j}_{k+n_j-n_i}(k+n_j-n_i,z)=0
\]
 thanks to \eqref{bhe2} and the fact that $2^z\in\ker(\Qt)^{-1}$, which gives $(\Qt)^{n_i-n_j}\Phi^{n_j}_{k+n_j-n_i}=0$.
Therefore, proceeding as in \eqref{eq:checkQKK}, the left hand side of \eqref{eq:secondExtKernAssmp} equals
\begin{equation}
  \sum_{k=0}^{n_j-1}Q^{n_j-n_i}\Psi^{n_j}_{k}\otimes(\Qt)^{n_i-n_j}\Phi^{n_j}_{k}=\sum_{k=0}^{n_j-1}\Psi^{n_i}_{n_i-n_j+k}\otimes(\Qt)^{n_i-n_j}\Phi^{n_j}_{k}
  =\sum_{k=0}^{n_i-1}\Psi^{n_i}_k\otimes\Phi^{n_i}_{k}
\end{equation}
as desired.

\section{The KPZ fixed point}\label{sec:123}

In this section we will take the KPZ scaling limit of the TASEP growth process, by using the formula from Theorem~\ref{thm:tasepformulas}, and will get a complete characterisation of the limiting Markov process, called the \emph{KPZ fixed point}. We start with introducing the topology and some operators.

\subsection{State space and topology.}
\label{UC} 

The state space on which we define the KPZ fixed point is the following:

\begin{definition}[$\UC$ functions]
We define $\UC$ as the space of upper semicontinuous functions\break$\fh\!:\R \to [-\infty,\infty)$ with $\fh(\fx)\le C(1+|\fx|)$  for some $C<\infty$.
\end{definition}

\begin{example}
The $\UC$ function $\mathfrak{d}_\fu(\fu) = 0$, $\mathfrak{d}_\fu(\fx) = -\infty$ for $\fx\neq \fu$, is known as a \emph{narrow wedge at $\fu$}.
These arise naturally as $\mathfrak{d}_0$ is clearly the limit  of the TASEP height function $h(x)=-|x|$ under the rescaling $\ep^{1/2}h(\ep^{-1}x)$.
\end{example}

We will endow this space with the topology of local $\UC$ convergence.
This is the natural topology for lateral growth, and will allow us to compute the KPZ limit in all cases of interest\footnote{Actually the bound $\fh(\fx)\le C(1+|\fx|)$ which we are imposing here and in~\cite{fixedpt} on $\UC$ functions is not as general as possible, but makes the arguments a bit simpler and it suffices for most cases of interest (see also~\cite[Foot. 9]{fixedpt}).}.
In order to define this topology, recall that $\fh$ is upper semicontinuous ($\UC$) if and only if its \emph{hypograph} 
\[\hypo(\fh) = \{(\fx,\fy): \fy\le \fh(\fx)\}\]
is closed in $[-\infty,\infty)\times \R$.
Slightly informally, local $\UC$ convergence can be defined as follows:

\begin{definition}[Local $\UC$ convergence]
We say that $(\fh_\ep)_\ep\subseteq\UC$ \emph{converges locally in $\UC$} to $\fh\in\UC$ if there is a $C>0$ such that $\fh_\ep(\fx) \le C(1+|\fx|)$ for all $\ep>0$ and for every $M\geq1$ there is a $\delta=\delta(\ep,M)>0$ going to 0 as $\ep\to0$ such that the hypographs $\mathfrak{H}_{\ep,M}$ and $\mathfrak{H}_{M}$ of $\fh_\ep$ and $\fh$ restricted to $[-M,M]$ are $\delta$-close in the sense that 
\[\cup_{(\ft,\fx)\in\mathfrak{H}_{\ep,M}}B_{\delta}((\ft,\fx))\subseteq\mathfrak{H}_{M}\qqand\cup_{(\ft,\fx)\in\mathfrak{H}_{M}}B_{\delta}((\ft,\fx))\subseteq\mathfrak{H}_{\ep,M}.\]
\end{definition}

We will also use an analogous space $\LC$, made of lower semicontinuous functions:

\begin{definition}[$\LC$ functions and local convergence]
We define $\LC=\big\{\fg\!: -\fg\in\UC\!\big\}$ and endow this space with the topology of \emph{local $\LC$ convergence} which is defined analogously to local $\UC$ convergence, now in terms of \emph{epigraphs}, 
\[\epi(\fg) = \{(\fx,\fy): \fy\ge \fg(\fx)\}.\]
Explicitly, $(\fg_\ep)_{\ep} \subset \LC$ converges locally in $\LC$ to $\fg \in \LC$ if and only if $-\fg_\ep\rightarrow-\fg$ locally in $\UC$.
\end{definition}

\begin{exercise}
Show that if $\fh$ is locally H\"older $\beta\in (0,1)$ then convergence in $\UC$ or $\LC$ implies uniform convergence on compact sets.
\end{exercise}
\begin{exercise}$\!\!\!\!{}^*$
Show that if $\fh_0\in \LC$ then the inviscid solution given by the Hopf-Lax formula
\[ h(t,x)=\sup_y\left\{ h_0(y)- t^{-1}(x-y)^2\right\}\]
is continuous in the $\UC$ topology.
\end{exercise}

\subsection{Auxiliary operators.}

In order to state our main result we need to introduce several operators, which will appear in the explicit Fredholm determinant formula for the fixed point. Our basic building block is the following (almost) group of operators:

\begin{definition}\label{def:groupS}
For $\fx,\ft\in\R^2\setminus \{\fx<0, \ft= 0\}$ let us define the operator
\begin{equation}\label{eq:groupS}
\fT_{\ft,\fx}=\exp \bigl\{ \fx\partial^2+\tfrac{\ft}3\tts\partial^3 \bigr\},
\end{equation}
which satisfies the identity
\begin{equation}\label{eq:groupS2}
\fT_{\fs,\fx}\fT_{\ft,\fy}=\fT_{\fs+\ft,\fx+\fy}
\end{equation}
as long as all subscripts avoid the region $\{\fx<0, \ft= 0\}$.
\end{definition}

For $\ft>0$ the operator $\fT_{\ft,\fx}$ acts on nice functions by convolution with the kernel
\begin{equation}\label{eq:fTdef}
\fT_{\ft,\fx}(z)=\frac1{2\pi\I} \int_{\langle}\, dw\,e^{\frac{\ft}3 w^3+\fx  w^2-z w} = \ft^{-1/3} e^{\frac{2 \fx^3}{3\ft^2} -\frac{z\fx}{\ft} }\,\Ai \bigl(-\ft^{-1/3} z+\ft^{-4/3}\fx^2\bigr),
\end{equation} 
where ${\langle}~$ is the positively oriented contour going in straight lines from $e^{-\I\pi/3}\infty$ to $e^{\I\pi/3}\infty$ through $0$ and $\Ai$ is the Airy function 
\[
\Ai(z)= \frac1{2\pi\I} \int_{\langle}dw\, e^{\frac{1}3 w^3-z w}.
\]
When $\ft<0$ we have the identity $\fT_{\ft,\fx}=(\fT_{-\ft,\fx})^*$, which in particular yields
\begin{equation}
  (\fT_{\ft,\fx})^*\fT_{\ft,-\fx}=\fI.
\end{equation}

\begin{exercise}
 Prove that this identity indeed holds.
\end{exercise}

\begin{definition}[Hit operators]\label{def:hit}
For $\fg\in \LC$ let us define
\begin{equation}
\bar{\fT}^{\epi(\fg)}_{\ft,\fx}(v,u)=\ee_{\fB(0)=v}\big[\fT_{\ft,\fx-\ftau}(\fB(\ftau),u)\1{\ftau<\infty}\big]=\int_{0}^\infty\,\pp_{\fB(0)=v}(\ftau\in d\fs)\,\fT_{\ft,\fx-\fs}(\fB(\ftau),u)
\end{equation}
where $\fB(x)$ is a Brownian motion with diffusion coefficient $2$ and $\ftau$ is the hitting time of the epigraph of the function $\fg$.
\end{definition}

Note that for $v\ge\fg(0)$ we trivially have the identity
\begin{equation}\label{eq:epiabove}
\bar{\fT}^{\epi(\fg)}_{\ft,\fx}(v,u)=\fT_{\ft,\fx}(v,u).
\end{equation}
If $\fh\in \UC$, there is a similar operator $\bar{\fT}^{\hypo(\fh)}_{\ft,\fx}$, except that now $\ftau$ is the hitting time of the  hypograph of $\fh$ and $\bar{\fT}^{\hypo(\fh)}_{\ft,\fx}(v,u)=\fT_{\ft,\fx}(v,u)$ for $v\le\fh(0)$.

One way to think of $\bar{\fT}^{\epi(\fg)}_{\ft,\fx}(v,u)$ is as a sort of asymptotic transformed transition `probability' for the Brownian motion $\fB$ to go from $v$ to $u$ hitting the epigraph of $\fg$ (note that $\fg$ is not necessarily continuous, so hitting $\fg$ is not the same as hitting $\epi(\fg)$; in particular, $\fB(\ftau)\geq\fg(\ftau)$ and in general the equality need not hold).
To see what we mean, write
\begin{equation}\label{eq:asymptTransTransProb}
\bar{\fT}^{\epi(\fg)}_{\ft,\fx}=\lim_{\mathbf{T}\to\infty}\bar{\fT}^{\epi(\fg),\mathbf{T}}\fT_{\ft,\fx-\mathbf{T}}
\quad\text{with}\quad\bar{\fT}^{\epi(\fg),\mathbf{T}}(v,u)=\ee_{\fB(0)=v}\big[\fT_{0,\mathbf{T}-\ftau}(\fB(\ftau),u)\1{\ftau\leq\mathbf{T}}\big]
\end{equation}
and note that $\bar{\fT}^{\epi(\fg),\mathbf{T}}(v,u)$ is nothing but the transition probability for $\fB$ to go from $v$ at time 0 to $u$ at time $\mathbf{T}$ hitting $\epi(\fg)$ in $[0,\mathbf{T}]$.

\begin{definition}[Brownian scattering operator]\label{def:epi}
For $\fg\in \LC$, $\fx\in \R$ and $\ft>0$ we define
\begin{equation}
\fK^{\epi(\fg)}_{-\ft}= \fI - \bigl(\fT_{-\ft,\fx} - \bar{\fT}^{\epi(\fg^-_\fx)}_{-\ft,\fx} \bigr)^* \bar\P_{\fg(\fx)}
\bigl(\fT_{-\ft,-\fx}-\bar{\fT}^{\epi(\fg^+_\fx)}_{-\ft,-\fx}\bigr),
\end{equation}
where $\fg^+_\fx(\fy)=\fg(\fx+\fy)$ and $\fg^-_\fx(\fy)=\fg(\fx-\fy)$.

\begin{exercise} 
The projection $\bar\P_{\fg(\fx)}$ can be removed from the formula without changing its meaning.
\end{exercise}

\begin{exercise}$\!\!\!\!{}^*$
Show that the kernel $\fK^{\epi(\fg)}_{-\ft}$ does not depend on $\fx$.
\end{exercise}

\noindent There is another operator which uses $\fh\in \UC$, and hits `from above',
\begin{equation}\label{eq:Kepihypo}
\fK^{\hypo(\fh)}_{\ft}= \fI - \bigl(\fT_{\ft,\fx} - \bar{\fT}^{\hypo(\fh^-_\fx)}_{\ft,\fx} \bigr)^*\P_{\fh(\fx)} \bigl(\fT_{\ft,-\fx}-\bar{\fT}^{\hypo(\fh^+_\fx)}_{\ft,-\fx}\bigr),
\end{equation}
\end{definition}

\begin{exercise} Show that the following identity holds
\begin{equation}\label{skew}
\fK^{\hypo(\fh)}_{\ft}=\big(\varrho\,\fK^{\epi(-\varrho\fh)}_{-\ft}\,\varrho\big)^*,
\end{equation}
where $\varrho\fh(\fx) = \fh(-\fx)$.
\end{exercise}

As above, $\fT_{\ft,\fx} - \bar{\fT}^{\epi(\fg_\fx)}_{\ft,\fx}$ may be thought of as a sort of asymptotic transformed transition probability for a Brownian motion $\fB$, started at time 0, not to hit $\epi(\fg)$. Therefore $\fK^{\epi(\fg)}_{\ft}$ may be thought of as the same sort of asymptotic transformed transition probability for $\fB$, in this case hitting $\epi(\fg)$, which is built out of the product of left and right `no hit' operators.

\subsection{The KPZ fixed point formula.} 
\label{sec:KPZfp}

At this stage we are ready to state our main result which we prove in Section~\ref{sec:TASEPlimit}.

\begin{definition}[The KPZ fixed point formula] 
The KPZ fixed point is the Markov process on $\UC$ with transition probabilities 
\begin{equation}\label{eq:fpform}
 \pp_{\fh_0} \bigl( \fh(\ft, \fx) \le \fg(\fx),\,\fx\in\R\bigr)  = \det\left(\fI-\fK^{\hypo(\fh_0)}_{\ft/2} \fK^{\epi(\fg)}_{-\ft/2}\right)_{L^2(\R)},
\end{equation}
where $\fh_0\in\UC$ and $\fg\in \LC$. Here $ \pp_{\fh_0}$ means the process with initial data $\fh_0$.
\end{definition}

\begin{remark}
The fact that the Fredholm determinant in the formula is finite is a consequence of the fact that there is a (multiplication) operator $M$ such that 
the map $(\fh_0,\fg)\mapsto M\fK^{\hypo(\fh_0)}_{\ft/2} \fK^{\epi(\fg)}_{-\ft/2}M^{-1}$ is continuous from $\UC\times\LC$ into the trace class (see
\cite{KPZ}).  We will not get into such issues in these notes.
\end{remark}

\begin{exercise}[Finite dimensional distributions]  Let $\fh_0\in \UC$ and $\fx_1<\fx_2<\dotsm<\fx_M$. Show that 
\begin{align}
&\pp_{\fh_0} \bigl(\fh(\ft,\fx_1)\leq\fa_1,\ldots,\fh(\ft,\fx_M)\leq\fa_M\bigr)\nonumber\\
&\hspace{0.3in}=\det\left(\fI-\fK^{\hypo(\fh_0)}_{\ft,\fx_M}+\fK^{\hypo(\fh_0)}_{\ft,\fx_M} e^{(\fx_1-\fx_M)\p^2}\bar\P_{\fa_1}e^{(\fx_2-\fx_1)\p^2}\bar\P_{\fa_2}\dotsm e^{(\fx_M-\fx_{M-1})\p^2}\bar\P_{\fa_M}\right)_{L^2(\R)}.\label{eq:twosided-path}
\end{align}
\end{exercise}

\begin{remark}[Extended kernels]
The formula in the exercise can be rewritten as
\begin{equation}\label{eq:twosided-ext}
\pp_{\fh_0} \bigl(\fh(\ft,\fx_1)\leq\fa_1,\ldots,\fh(\ft,\fx_M)\leq\fa_M\bigr)=\det\left(\fI-\chi_{\fa}\fK^{\hypo(\fh_0)}_{\ft,\uptext{ext}}\chi_{\fa}\right)_{L^2(\{\fx_1,\ldots,\fx_M\}\times\R)},
\end{equation}
where
\begin{align}
\fK^{\hypo(\bh_0)}_{\ft,\uptext{ext}}(\fx_i,\cdot;\fx_j,\cdot)&=-e^{(\fx_j-\fx_i)\p^2}\uno{\fx_i<\fx_j}+e^{-\fx_i\p^2}\fK^{\hypo(\fh_0)}_\ft e^{\fx_j\p^2}.\label{eq:Khypo-ext}
\end{align}

The kernel in \eqref{eq:twosided-ext} is usually referred to as an \emph{extended kernel} (note that the Fredholm determinant is being computed on the `extended $L^2$ space' $L^2(\{\fx_1,\ldots,\fx_M\}\times\R)$).
The kernel appearing after the second hypo operator in \eqref{eq:twosided-path} is sometimes referred to as a \emph{path integral kernel} \cite{bcr}, and should be thought of as a discrete, pre-asymptotic version of the epi operators (on a finite interval).

The fact that $e^{-\fx\p^2}\fK^{\hypo(\fh_0)}_{\ft}e^{\fx\p^2}$ makes sense is not entirely obvious, but follows from the fact that
$\fK^{\hypo(\fh_0)}_{\ft}$ equals
\begin{equation}\label{eq:Kep-expansion}
 (\fT_{\ft,\fx} )^*\P_{\fg(\fx)}\fT_{\ft,-\fx} + ( \bar{\fT}^{\epi(\fg^-_\fx)}_{\ft,\fx} )^*\bar\P_{\fg(\fx)}
\fT_{\ft,-\fx} +  (\fT_{\ft,\fx} )^* \bar\P_{\fg(\fx)}
\bar{\fT}^{\epi(\fg^+_\fx)}_{\ft,-\fx}
 - ( \bar{\fT}^{\epi(\fg^-_\fx)}_{\ft,\fx} )^* \bar\P_{\fg(\fx)}
\bar{\fT}^{\epi(\fg^+_\fx)}_{\ft,-\fx},
\end{equation}
together with the group property \eqref{eq:groupS2}  and the definition of the hit operators.
\end{remark}

\begin{example}[Airy processes] For special initial data, at time $\ft=1$ we recover the known Airy$_1$, Airy$_2$ and Airy$_{2\to1}$ processes:  
\begin{enumerate}
\item Narrow wedge initial data leads to the \emph{Airy$_2$ process} \cite{prahoferSpohn,johansson}:
\[\fh(1,\fx;\mathfrak{d}_\fu) + (\fx-\fu)^2~=~ \aip_2(\fx);\]
\item \emph{Flat} initial data $\fh_0\equiv0$ leads to the \emph{Airy$_1$ process} \cite{sasamoto,borFerPrahSasam}:
\[\fh(1,\fx;0)~=~ \aip_1(\fx);\]
\item \emph{Wedge} or \emph{half-flat} initial data $\fh_{\text{h-f}}(\fx) = -\infty$ for $\fx<0$ and $\fh_{\text{h-f}}(\fx)=0$ for $\fx\geq0$, leads to the \emph{Airy$_{2\to1}$ process} \cite{bfs}:
\[\fh(1,\fx;\fh_{\text{h-f}})+\fx^2\uno{\fx<0}~=~ \Bt(\fx).\]
\end{enumerate}
We leave the derivation of the processes $\aip_1$ and $\aip_2$ as Exercise~\ref{eq:getAiry}. To get the formula in the case 3 we need to show that the finite dimensional distributions match, by computing the kernel on the right hand side of \eqref{eq:twosided-ext} with $\fh_0^-\equiv-\infty$ and $\fh_0^+(\fx)\equiv0$.  
 It is straightforward to check that $\bar{\fT}^{\hypo(\fh_0^-)}_{\ft,0}\equiv0$.
On the other hand, an application of the reflection principle based on \eqref{eq:asymptTransTransProb}  yields that, for $v\geq0$,
\[\bar{\fT}^{\hypo(\fh_0^+)}_{\ft,0}(v,u)=\int_0^\infty\!\pp_v(\tau_0\in d\fy)\fT_{\ft,-\fy}(0,u)=\fT_{\ft,0}(-v,u),\]
which gives, with $\varrho$ the reflection operator $\varrho f(x)=f(-x)$,
\[\fK^{\hypo(\fh_0)}_{\ft}=\fI-(\fT_{\ft,0})^*\P_0[\fT_{\ft,0}-\varrho\fT_{\ft,0}]=(\fT_{\ft,0})^*(\fI+\varrho)\bar \P_0\fT_{\ft,0}.\]
This yields, using \eqref{eq:Khypo-ext},
\begin{align}
\fK^{\hypo(\fh_{\text{h-f}})}_{\ft,\text{ext}}(\fx_i,\cdot;\fx_j,\cdot)&=-e^{(\fx_j-\fx_i)\p^2}\uno{\fx_i<\fx_j}+\fT_{0,-\fx_i}(\fT_{\ft,0})^*(\fI+\varrho)\bar \P_0\fT_{\ft,0}\fT_{0,\fx_i}\\
&=-e^{(\fx_j-\fx_i)\p^2}\uno{\fx_i<\fx_j}+(\fT_{\ft,-\fx_i})^*(\fI+\varrho)\bar \P_0\fT_{\ft/2,\fx_i}.
\end{align}
Choosing $\ft=1$ and using \eqref{eq:fTdef} yields that the second term on the right hand side equals
\begin{multline}
\fK^{\hypo(\fh_{\text{h-f}})}_{\ft,\text{ext}}(\fx_i,u;\fx_j,v)
=\int_{-\infty}^0d\lambda\,e^{-2\fx_i^3/3-\fx_i(u-\lambda)}\,\Ai(u-\lambda+\fx_i^2)\, e^{2\fx_j^3/3+\fx_j(v-\lambda)}\,\Ai(v-\lambda+\fx_j^2)\\
+\int_{-\infty}^0d\lambda\,e^{-2\fx_i^3/3-\fx_i(u+\lambda)}\,\Ai(u+\lambda+\fx_i^2)\, e^{2\fx_j^3/3+\fx_j(v-\lambda)}\,\Ai(v-\lambda+\fx_j^2)
\end{multline}
which, after a simple conjugation, gives the kernel for the Airy$_{2\to1}$ process.
\end{example}

\begin{exercise}  
Show that a conjugation gives the kernel for the Airy$_{2\to1}$ process (see \cite{bfs}).
\end{exercise}

\begin{example}[The Airy$_2$ process]  
Given $\fx_1,\dots,\fx_n\in\mathbb{R}$ and $\fa_1<\dots<\fa_n$ in $\mathbb{R}$, we have
\begin{equation}
  \mathbb{P}\bigl(\aip_2(\fx_1)\le \fa_1,\dots,\aip_2(\fx_n)\le \fa_n\bigr) =
  \det \bigl(I-\chi_{\fa}\K^{\mathrm{ext}} \chi_{\fa}\bigr)_{L^2(\{\fx_1,\dots,\fx_n\}\times\mathbb{R})},
\end{equation}
where the {\it extended Airy kernel} is defined by
\begin{equation}
  \K^\mathrm{ext}(\fx,u;\fx',u')=
  \begin{cases}
    \int_0^\infty d\lambda\,e^{-\lambda(\fx-\fx')}\Ai(u+\lambda)\Ai(u'+\lambda), &\text{if $\fx\ge \fx'$},\\
    -\int_{-\infty}^0 d\lambda\,e^{-\lambda(\fx-\fx')}\Ai(u+\lambda)\Ai(u'+\lambda), &\text{if $\fx<\fx'$}.
  \end{cases}\label{eq:extAiry}
\end{equation}
\end{example}

\begin{example}[The Airy$_1$ process] 
For $\fx_i$ and $\fa_i$ as in the previous example, we have the identity
\begin{equation}
  \mathbb{P}\bigl(\aip_1(\fx_1)\le \fa_1,\dots,\aip_1(\fx_n)\le \fa_n\bigr) = 
  \det\big(I-\chi_{\fa}K^{\mathrm{ext}}_1 \chi_{\fa}\big)_{L^2(\{\fx_1,\dots,\fx_n\}\times\mathbb{R})},
\end{equation}
with the kernel
\begin{multline}\label{eq:fExtAiry1}
  K^{\rm ext}_1(\fx,u;\fx',u')=-\frac{1}{\sqrt{4\pi
      (\fx'-\fx)}}\exp\!\left(-\frac{(u'-u)^2}{4 (\fx'-\fx)}\right)\uno{\fx'>\fx}\\
  +\Ai(u+u'+(\fx'-\fx)^2) \exp\!\left((\fx'-\fx)(u+u')+\tfrac23(\fx'-\fx)^3\right).
\end{multline}
\end{example}

\begin{exercise}\label{eq:getAiry}
 Obtain the kernels for the Airy$_2$ and Airy$_1$ processes from the formula \eqref{eq:fpform}.
\end{exercise}

\subsection{Symmetries and invariance.}

The KPZ fixed point inherits several nice properties as a scaling limit of TASEP. We will write $\fh(\ft, \fx;\fh_0)$ for the KPZ fixed point $\fh(\ft, \fx)$ started at $\fh_0$.

\begin{proposition}[Symmetries of $\fh$]\label{symmetries}
The KPZ fixed point $\fh$ has the following properties:
\begin{enumerate}[label=\emph{(\roman*)}]
\item (1:2:3 scaling invariance)\hskip0.1in
$\alpha\tts\fh(\alpha^{-3}\ft,\alpha^{-2}\fx;\alpha\fh_0(\alpha^{-2}\fx) )\stackrel{\uptext{dist}}{=} \fh(\ft, \fx;\fh_0), \quad \alpha>0$;
\item  (Skew time reversal) \hskip0.1in
$\pp\big(\fh(\ft,\fx; \fg)\le -\ff(\fx)\big) =\pp\big(\fh(\ft,\fx;\ff)\le -\fg(\fx)\big),\quad\ff,\fg\in\UC;$
\item (Shift invariance) \hskip0.1in$\fh(\ft,\fx+\fu; \fh_0(\fx+\fu))\stackrel{\uptext{dist}}{=} \fh(\ft,\fx; \fh_0);$
\item  (Reflection invariance) \hskip0.1in$\fh(\ft, -\fx;\fh_0(-\fx))\stackrel{\uptext{dist}}{=} \fh(\ft,\fx; \fh_0);$
\item (Affine invariance)  \hskip0.1in$\fh(\ft,\fx; \ff(\fx) + \fa + c\fx)\stackrel{\uptext{dist}}{=} \fh(\ft,\fx; \ff(\fx+\frac12c\ft)) + \fa + c\fx + \frac14c^2\ft;$
\item (Preservation of max) \hskip0.1in$\fh(\ft,\fx;\ff_1\vee \ff_2)= \fh(\ft,\fx; \ff_1)\vee \fh(\ft,\fx; \ff_2)$.
\end{enumerate}
\end{proposition}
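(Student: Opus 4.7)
The plan is to dispatch the six symmetries by a mix of direct algebraic manipulation of the Fredholm determinant \eqref{eq:fpform} (for items (i)--(v)) and a deeper variational/TASEP-coupling argument (for (vi)). First, translation invariance (iii) and reflection invariance (iv) are immediate once one observes that the kernel $\fT_{\ft,\fx}$ commutes with spatial translations and is symmetric under simultaneous reflection of its arguments, and that the Brownian motion entering Definition~\ref{def:hit} is translation and reflection invariant; conjugating the building blocks of $\fK^{\hypo(\fh_0)}_{\ft/2}$ and $\fK^{\epi(\fg)}_{-\ft/2}$ by the appropriate shift or reflection operator and applying the cyclic invariance of the Fredholm determinant yields the claim. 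For the $1{:}2{:}3$ scaling (i), I would perform the substitution $w\mapsto\alpha w$ in the contour integral \eqref{eq:fTdef}, obtaining the scaling relation $\fT_{\alpha^{-3}\ft,\alpha^{-2}\fx}(z)=\alpha\,\fT_{\ft,\fx}(\alpha z)$; Brownian motion with diffusion coefficient $2$ scales consistently since $\fB(\alpha^{-2}\fx)\stackrel{\uptext{dist}}{=}\alpha^{-1}\fB(\fx)$, so the hit operators scale in the matching way, and after the change of variables $u\mapsto\alpha u$ on $L^{2}(\R)$ the rescaled Fredholm determinant equals the original one.

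For the affine invariance (v), I would absorb the tilt $\fa+c\fx$ by completing the square in the exponents of \eqref{eq:fTdef}; equivalently, a Girsanov change of measure on the Brownian motion in Definition~\ref{def:hit} produces precisely the horizontal shift $\fx\mapsto\fx+c\ft/2$ together with the affine vertical shift $\fa+c\fx+c^{2}\ft/4$. For skew time reversal (ii), the key input is identity \eqref{skew}, which relates hypo and epi operators via the reflection $\varrho$ and adjoint: $\fK^{\hypo(\fh)}_{\ft}=(\varrho\,\fK^{\epi(-\varrho\fh)}_{-\ft}\,\varrho)^{*}$. Starting from $\det(\fI-\fK^{\hypo(\fg)}_{\ft/2}\fK^{\epi(-\ff)}_{-\ft/2})$, I would rewrite the hypo factor using \eqref{skew}, then successively apply the cyclic property $\det(\fI-AB)=\det(\fI-BA)$, adjoint invariance $\det(\fI-A)=\det(\fI-A^{*})$, and conjugation invariance under the unitary $\varrho$ (since $\varrho^{2}=\fI$). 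Combining the output with reflection invariance (iv) recovers the expression $\det(\fI-\fK^{\hypo(\ff)}_{\ft/2}\fK^{\epi(-\fg)}_{-\ft/2})$, which is the right-hand side of (ii).

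The hardest step is (vi), preservation of max, which cannot be read off the Fredholm determinant formula in any direct way because that formula encodes joint distributions rather than a pathwise identity. The cleanest route, which I would follow, uses the variational representation of the KPZ fixed point established in~\cite{KPZ}: there is a jointly continuous random field $\mathcal{A}(\ft,\fx,\fy)$ (the Airy sheet) such that
\begin{equation*}
\fh(\ft,\fx;\fh_0)=\sup_{\fy\in\R}\bigl\{\fh_0(\fy)+\mathcal{A}(\ft,\fx,\fy)\bigr\},
\end{equation*}
whereupon the deterministic identity $\sup_{\fy}\{(\ff_1\vee\ff_2)(\fy)+\mathcal{A}\}=\sup_{\fy}\{\ff_1(\fy)+\mathcal{A}\}\vee\sup_{\fy}\{\ff_2(\fy)+\mathcal{A}\}$ is trivial and yields (vi). An alternative approach is to verify the identity at the TASEP level through the basic coupling/graphical construction (the TASEP height evolution is monotone and max-preserving on account of the variational LPP representation) and then pass the identity through the $1{:}2{:}3$ scaling limit, using continuity of the KPZ fixed point in the $\UC$ topology. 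Either way, the principal obstacle is that (vi) requires input external to the explicit kernel formula, unlike (i)--(v).
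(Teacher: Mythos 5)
Your handling of (i), (iii), (iv) is reasonable, and your manipulation for (ii) via identity~\eqref{skew} combined with reflection invariance is a valid alternative to the paper's suggestion of checking these at the TASEP level and passing to the limit. Two genuine issues stand out in (v) and (vi), however.

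For (vi), your primary route is circular. The Airy-sheet variational representation $\fh(\ft,\fx;\fh_0)=\sup_\fy\{\fh_0(\fy)+\mathcal{A}(\ft,\fx,\fy)\}$ (Theorem~\ref{thm:airyvar}) is \emph{derived} in the paper by repeated application of the preservation-of-max property (vi), together with an approximation of $\fh_0$ by functions finite on finitely many points. You cannot, therefore, invoke the variational formula to establish (vi). Your ``alternative approach'' (check max-preservation for the TASEP height evolution via the basic coupling/graphical construction, then pass through the $1{:}2{:}3$ limit using $\UC$-continuity) is precisely the correct one and is the paper's route; it should be presented as the primary argument, not a fallback.

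For (v), the paper explicitly flags affine invariance, along with (vi), as requiring coupling: it is not proved by kernel manipulation but by applying the variational formula of Theorem~\ref{thm:airyvar}, completing the square inside the supremum, and using the translation invariance of the Airy sheet. Your proposed route (absorbing the tilt by completing the square in~\eqref{eq:fTdef}, or by Girsanov on the Brownian motion in the hit operators) is a genuinely different strategy. It is not clearly fatal, but it is nontrivial to carry out: the tilt $c\fx$ in $\fh_0$ would have to produce, through the structure of $\fK^{\hypo(\fh_0)}_{\ft/2}\fK^{\epi(\fg)}_{-\ft/2}$, the time-dependent horizontal shift $\fx\mapsto\fx+\tfrac12c\ft$ in the \emph{final-data} function $\fg$ appearing in the other factor, together with the $\tfrac14c^2\ft$ correction. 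A Girsanov tilt on the Brownian motion inside the hit operator produces exponential Radon--Nikodym factors whose cancellation against the kernel $\fT_{\ft,\fx}$ and against the projection $\bar\P_{\fg(\fx)}$ would need to be checked carefully and does not reduce to a simple conjugation of the Fredholm determinant. You should either supply these details or follow the paper's cleaner route: prove (vi) first by coupling, deduce the variational formula, and then read off (v) by a change of variables in the supremum.
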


\begin{exercise}  Prove property (i) from the fixed point formula.  Prove that properties (ii), (iii),(iv) hold for TASEP, and therefore for the limiting
fixed point.  
\end{exercise} 

\noindent Properties (v) and  (vi) require coupling and we provide their proves in Section~\ref{sec:variational}.

\subsection{Markov property.}

The sets $A_{\mathfrak{g}} = \bigl\{ \fh\in \UC:  \fh(\fx) \le \fg(\fx),\,\fx\in\R\bigr\}$ with $\fg\in\LC$ form a generating family for the Borel sets $\mathcal{B}(\UC)$.
Hence from \eqref{eq:fpform} we can define the fixed point transition probabilities $p_{\fh_0}\,(\ft, A_{\mathfrak{g}})$, for which we have:

\begin{lemma}
For fixed $\fh_0\in\UC$ and $\ft>0$, the measure $p_{\fh_0}\,(\ft, \cdot)$ is a probability measure on $\UC$.
\end{lemma}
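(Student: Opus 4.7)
The plan is to verify three properties: (a) $p_{\fh_0}(\ft, A_\fg) \in [0,1]$ for every $\fg \in \LC$; (b) the set function extends to a countably additive Borel measure on $\UC$; and (c) the total mass equals $1$. Throughout, the main tool is the identification of \eqref{eq:fpform} as a scaling limit of TASEP transition probabilities, which is the content of Section~\ref{sec:TASEPlimit}.

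For (a), I would identify $p_{\fh_0}(\ft, A_\fg)$ as a limit of genuine TASEP probabilities. Given $\fh_0 \in \UC$ and $\fg \in \LC$, one selects TASEP initial data $h_0^\ep$ and cutoffs $g^\ep$ whose $1{:}2{:}3$ rescalings converge locally to $\fh_0$ and $\fg$ respectively. Applying Theorem~\ref{thm:tasepformulas} and passing to the KPZ limit yields
\[
\pp_{h_0^\ep}\!\bigl(h_t^\ep(x) \le g^\ep(x) \text{ for all } x \in \Z\bigr) \longrightarrow \det\!\bigl(\fI - \fK^{\hypo(\fh_0)}_{\ft/2}\, \fK^{\epi(\fg)}_{-\ft/2}\bigr)_{L^2(\R)}.
\]
Since every prelimit quantity lies in $[0,1]$, so does the limit.

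For (c), I would take $\fg \equiv C$ and let $C \to +\infty$. The Brownian hit kernels $\bar{\fT}^{\epi(\fg^\pm_\fx)}_{-\ft,\pm\fx}$ involve $\pp_{\fB(0)=v}(\ftau < \infty)$ for hitting the epigraph of a constant $C$, which vanishes as $C \to \infty$; combined with the projection $\bar\P_{\fg(\fx)} = \bar\P_C$ pushing mass to $+\infty$, this forces $\fK^{\epi(\fg)}_{-\ft/2} \to 0$ in trace class, and hence the Fredholm determinant tends to $1$. Alternatively, tightness of the rescaled TASEP height functions on compact spatial windows directly gives $p_{\fh_0}(\ft, \UC) = 1$.

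For (b), I would proceed via the finite-dimensional marginals in \eqref{eq:twosided-ext}. The same TASEP approximation argument shows that for any $\fx_1 < \cdots < \fx_M$, the quantities $\pp_{\fh_0}(\fh(\ft,\fx_i) \le \fa_i)$ are genuine, consistent probability distributions on $\R^M$. By the Kolmogorov extension theorem applied along a countable dense $D \subset \R$, this produces a Borel probability measure on $\R^D$. Tightness of the approximating TASEP ensembles in the local $\UC$ topology (using the H\"older-$\beta$ regularity with $\beta < 1/2$ inherited by the limiting fields, as in the Airy process examples) upgrades the construction to a probability measure on $\UC$. The family $\{A_\fg\}_{\fg \in \LC}$ is closed under finite intersections via $A_{\fg_1} \cap A_{\fg_2} = A_{\fg_1 \wedge \fg_2}$ and generates $\mathcal{B}(\UC)$, so agreement of the measure on $A_\fg$ with the formula follows by monotone convergence, approximating
\[
A_\fg = \bigcap_{\fx \in D} \{\fh : \fh(\fx) \le \fg(\fx)\}
\]
and using upper semicontinuity of $\fh$, lower semicontinuity of $\fg$, together with continuity of the Fredholm determinant as $\fg$ is replaced by piecewise-constant envelopes.

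The main obstacle is (b): steps (a) and (c) reduce quickly to TASEP, but promoting the set function on the generating family $\{A_\fg\}$ to a Borel measure on $\UC$ requires control of the fixed point in the $\UC$ topology uniform in the approximations. The decisive input is the trace-class continuity of $(\fh_0,\fg) \mapsto M\, \fK^{\hypo(\fh_0)}_{\ft/2} \fK^{\epi(\fg)}_{-\ft/2} M^{-1}$ alluded to after \eqref{eq:fpform}, which both justifies the limit identification and provides the compactness needed to conclude $\UC$-concentration.
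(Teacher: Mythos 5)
Your steps (a) and (c) essentially reproduce the paper's argument for showing that $p_{\fh_0}(\ft,\cdot)$ takes values in $[0,1]$ and that the total mass is $1$ (the paper likewise relies on the trace-norm bound $|\det(\fI-\fK)-1|\le\|\fK\|_1 e^{\|\fK\|_1+1}$ for the limit to $1$). However, there is a genuine gap in step (b), and it is precisely the point the paper singles out as the real difficulty. To conclude that the finite-dimensional marginals $\pp_{\fh_0}(\fh(\ft,\fx_i)\le\fa_i,\;i=1,\ldots,M)$ are genuine probability distributions (rather than sub-probability distributions with mass escaping to $-\infty$), one must show that they tend to $0$ as any $\fa_i\to-\infty$. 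Your proposal asserts, in passing, that "the same TASEP approximation argument" yields genuine consistent probability distributions, and later invokes "tightness of the approximating TASEP ensembles"; but a pointwise limit of probability distributions need not be a probability distribution, and tightness of the approximations is exactly what needs proof here. No argument is offered for either claim.

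The paper's proof handles this by observing that the multipoint probability is bounded above by the one-point probability $\pp_{\fh_0}(\fh(\ft,\fx_i)\le\fa_i)$, and then applying the skew time reversal symmetry (Proposition~\ref{symmetries}(ii)) to rewrite this as the probability that the Airy$_2$ process minus a parabola lies below $-\fh_0+\fa_i$ everywhere — a quantity that manifestly vanishes as $\fa_i\to-\infty$. This reduction to a one-point bound plus a known marginal (via symmetry) is the key idea, and it is not present in your proposal. Incidentally, the rest of step (b) — Kolmogorov extension on a countable dense set and upgrading to a $\UC$-valued measure via H\"older regularity — is more machinery than the paper needs for this lemma; that regularity (Theorem~\ref{reg}) is used later for the Feller/Markov property, not here. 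The lemma itself only needs the three limit claims about the finite-dimensional distributions, and your proposal is missing the hardest one.
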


\begin{proof}[Sketch of the proof]
It is clear from the construction that $\pp_{\fh_0}( \fh(\ft, \fx_i) \le \fa_i, i=1,\ldots,n)$ is non-decreasing in each $\fa_i$ and is in $[0,1]$.
We need to show then that this quantity goes to $1$ as all $\fa_i$'s go to infinity, and to $0$ if any $\fa_i$ goes to $-\infty$.
The first one is standard, and relies on the inequality $\left|\det(\fI-\fK) - 1\right|\leq\|\fK\|_1e^{\|\fK\|_1+1}$ (with $\|\cdot\|_1$ denoting trace norm).
The second limit is in general very hard to show for a formula given in terms of a Fredholm determinant, but it turns out to be rather easy in our case, because the multipoint probability is trivially bounded by $\pp_{\fh_0}( \fh(\ft, \fx_i) \le \fa_i)$, for any $i$.  By the skew time reversal symmetry this becomes the probability that the Airy$_2$ process minus a parabola is bounded everywhere by $-\fh_0+\fa_i$, which clearly
goes to $0$ as $\fa_i$ goes to $-\infty$.
\end{proof}

\begin{theorem}\label{thm:markovprop}
  The KPZ fixed point $\big(\fh(\ft,\cdot)\big)_{\ft>0}$ is a (Feller) Markov process taking values in $\UC$.
\end{theorem}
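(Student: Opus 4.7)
The plan is to construct the process via its finite-dimensional distributions, which are already specified by the formula~\eqref{eq:fpform} together with the multipoint version \eqref{eq:twosided-ext}, and then to verify consistency, preservation of $\UC$, and continuous dependence on the initial condition. First, I would use the preceding lemma to conclude that for each fixed $\ft$ and $\fh_0$ the one-dimensional marginal $p_{\fh_0}(\ft,\cdot)$ is a probability measure on $\UC$; the multipoint formula~\eqref{eq:twosided-ext} analogously defines probability measures on $\R^M$ once one checks non-negativity and normalization, which follow from the same $\fa_i \to +\infty$ and $\fa_i \to -\infty$ asymptotics used in the lemma.

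Second, I would establish the Chapman--Kolmogorov identity
\[
p_{\fh_0}(\ft_1+\ft_2, A) = \int_{\UC} p_{\fh_1}(\ft_2, A)\, p_{\fh_0}(\ft_1, d\fh_1).
\]
The cleanest route is by approximation: the rescaled TASEP height function converges to the KPZ fixed point (this is the content of Section~\ref{sec:TASEPlimit}, applied to arbitrary $\UC$ initial data), TASEP is itself a Markov process, and its transition probabilities satisfy Chapman--Kolmogorov at the discrete level. Passing to the scaling limit along test functions of the form $\uno{A_\fg}$, together with the Feller continuity claim below to interchange limit and integration, yields the identity. Consistency of the full family of fdds follows, so Kolmogorov's extension theorem produces a probability measure on the path space with the prescribed fdds.

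Third, I need to check that sample paths at each fixed $\ft > 0$ take values in $\UC$ almost surely. Using the variational identity Proposition~\ref{symmetries}(vi) and the skew time-reversal symmetry (ii), one can express $\fh(\ft,\fx;\fh_0)$ as the supremum over $\fu$ of $\fh(\ft,\fx;\mathfrak{d}_\fu) + \fh_0(\fu)$, which through the narrow-wedge case reduces matters to regularity of the Airy$_2$ process minus a parabola; the latter is locally H\"older-$\beta$ for every $\beta<1/2$, and the supremum of H\"older continuous functions against an $\fh_0$ with at most linear growth is upper semicontinuous with at most linear growth. (The bound $\fh(\ft,\fx) \le C(1+|\fx|)$ also comes from the parabolic decay of the Airy$_2$ process absorbed into the Hopf--Lax-type variational formula.)

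Finally, for the Feller property I would show that $\fh_0 \mapsto \ee_{\fh_0}[f(\fh(\ft,\cdot))]$ is continuous on $\UC$ for every bounded continuous $f$. Since cylinder sets of the form $A_\fg$ generate $\mathcal{B}(\UC)$, by a standard monotone-class argument it suffices to prove continuity of the map $(\fh_0,\fg) \mapsto \det(\fI-\fK^{\hypo(\fh_0)}_{\ft/2}\fK^{\epi(\fg)}_{-\ft/2})$ on $\UC \times \LC$; this is precisely the trace-class continuity result mentioned in the remark following Definition~9.12 and established in~\cite{KPZ}. The main obstacle is exactly this continuity: the hit operators $\bar{\fT}^{\epi(\fg)}_{\ft,\fx}$ are manifestly discontinuous in $\fg$ under pointwise convergence (arbitrarily small bumps change hitting times drastically), and the whole point of the local $\UC/\LC$ topology is that convergence of hypographs/epigraphs is exactly the regularity needed for convergence of the Brownian hitting times that define these operators. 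Proving the required trace-norm continuity thus rests on a careful estimate of the tails of the hitting time $\ftau$ combined with the exponential decay of $\fT_{\ft,\fx}(z)$ as $z \to -\infty$, and this is the technical heart of the argument.
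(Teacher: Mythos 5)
Your overall strategy is the same as the paper's: the Markov property of the KPZ fixed point is inherited from TASEP through the $1{:}2{:}3$ scaling limit, and the Feller property comes from the continuity of the Fredholm determinant formula in $(\fh_0,\fg)\in\UC\times\LC$. So far so good, and your discussion of the Feller continuity and of why $\UC$-valuedness holds is along the right lines.

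The one real gap is in the Chapman--Kolmogorov step, and it is precisely the point the paper singles out. You write that ``Feller continuity\ldots to interchange limit and integration'' yields the identity, but that understates what is required. At the TASEP level one has
\[
p^\ep_{\fh_0^\ep}(\ft_1+\ft_2, A) \;=\; \int p^\ep_{\fh}(\ft_2, A)\, p^\ep_{\fh_0^\ep}(\ft_1, d\fh),
\]
and to send $\ep\to0$ inside the integral you need \emph{weak convergence of the intermediate law} $p^\ep_{\fh_0^\ep}(\ft_1,\cdot)\Rightarrow p_{\fh_0}(\ft_1,\cdot)$ as probability measures on $\UC$, not merely a dominated-convergence style interchange for a fixed measure. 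That weak convergence requires tightness of $\{p^\ep_{\fh_0^\ep}(\ft_1,\cdot)\}_\ep$ on $\UC$, and this is exactly the compactness the paper's proof sketch attributes to Theorem~\ref{reg}: the uniform-in-$\ep$ local H\"older estimate together with the fact that the H\"older balls $\mathscr C^\beta$ are compact in $\UC$. Your write-up mentions Airy$_2$ regularity only in the separate step of showing sample paths lie in $\UC$; you should also flag it explicitly as the source of the compactness that makes the Chapman--Kolmogorov passage legitimate. (A secondary point: one also needs the convergence $p^\ep_\fh(\ft_2,A)\to p_\fh(\ft_2,A)$ to hold uniformly over $\fh$ ranging in the compact sets on which the tight measures concentrate, not just pointwise in $\fh$; this too is part of the technical heart that your phrasing elides.) Finally, your route to $\UC$-valuedness via the variational formula and Proposition~\ref{symmetries}(vi) is workable, but note the paper proves Theorem~\ref{reg} directly from the path-integral/determinantal formula via the Kolmogorov continuity criterion, avoiding any reliance on the preservation-of-max property, whose proof in turn rests on the coupling construction; keeping the two independent avoids a potential circularity.
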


\noindent The proof is based on the fact that $\fh(\ft,\fx)$ is the limit of $\fh^\ep(\ft,\fx)$, which is Markovian.
To derive from this the Markov property of the limit requires some compactness, which in our case is provided by Theorem~\ref{reg} below.

\subsection{Regularity and local Brownian behavior.}

Up to this point we only know that the fixed point is in $\UC$, but by the smoothing mechanism inherent to models in the KPZ class one should expect $\fh(\ft,\cdot)$
to at least be continuous for each fixed $\ft>0$.
The next result shows that for every $M>0$, $\fh(\ft,\cdot)\big|_{[-M,M]}$ is H\"older-$\beta$ for any $\beta<1/2$ with probability 1.

\begin{definition}[Local H\"older spaces]
Let us define the space 
\[
\mathscr C= \bigl\{ \fh\!:\R \to [-\infty,\infty) ~\text{continuous with}~\fh(\fx)\le C(1+|\fx|)~\text{for some}~ C<\infty\bigr\}.
\] 
For $M > 0$ we define the local H\"older norm
 \begin{equation}
 \| \fh \|_{\beta, [-M,M]} = \sup_{\fx_1\neq \fx_2 \in [-M,M]} \frac{ |\fh(\fx_2)-\fh(\fx_1)|}{|\fx_2-\fx_1|^\beta}
 \end{equation}
 and the local H\"older spaces
\[\mathscr C^\beta= \bigcap_{M \in \N} \bigl\{ \fh\in \mathscr C : \| \fh \|_{\beta, [-M,M]} <\infty\bigr\}.\]
The topology on $\UC$, when restricted to $\mathscr C$, is the topology of uniform convergence on compact sets.
$\UC$ is a Polish space and the spaces $\mathscr C^\beta$ are compact in $\UC$.
\end{definition} 

Then we can get spatial regularity of the KPZ fixed point.

\begin{theorem}[Space regularity]\label{reg}
Fix $\ft>0$, $\fh_0\in \UC$ and initial data $h_0^{(\ep)}$ for the TASEP height function such that $\fh_0^\ep\rightarrow\fh_0$ locally in $\UC$ as $\ep\to0$.
Then for each $\beta\in (0,1/2)$ and $M > 0$ we have
\begin{equation}
\lim_{A\to \infty} \limsup_{\ep\to 0} \pp \bigl( \| \fh^\ep(\ft)\|_{\beta, [-M,M]}\ge A\bigr) = \lim_{A\to \infty} \pp\bigl( \| \fh\|_{\beta, [-M,M]}\ge A\bigr) =0.
\end{equation}
\end{theorem}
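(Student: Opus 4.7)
The plan is to prove both limits via a Kolmogorov-type continuity criterion applied to the prelimit $\fh^\ep(\ft,\cdot)$, with moment bounds uniform in $\ep$, and then pass to the limit using the compactness of $\mathscr C^\beta$ inside $\UC$. Specifically, once I can establish that for every $p>2$ there is a constant $C_p=C_p(\ft,\fh_0,M)<\infty$, uniform in small $\ep$, such that
$$\ee\bigl[|\fh^\ep(\ft,\fx_2)-\fh^\ep(\ft,\fx_1)|^p\bigr]\leq C_p\,|\fx_2-\fx_1|^{p/2}$$
for $\fx_1,\fx_2\in[-M,M]$, Kolmogorov's criterion produces, for each $\beta<1/2$, an integrable random variable dominating $\|\fh^\ep(\ft)\|_{\beta,[-M,M]}$ uniformly in $\ep$. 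This gives the first limit directly, and the second follows by applying the same argument to $\fh(\ft,\cdot)$, whose finite-dimensional distributions are given by the KPZ fixed-point formula.

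The key ingredient is therefore a Gaussian-type increment tail bound
$$\pp\bigl(|\fh^\ep(\ft,\fx_2)-\fh^\ep(\ft,\fx_1)|\geq r\sqrt{|\fx_2-\fx_1|}\bigr)\leq C e^{-c r^{2}}$$
for $r$ in a suitable range, uniformly in small $\ep$ and in $\fx_1,\fx_2\in[-M,M]$. I would obtain this in two steps. First, for narrow-wedge or step initial data the bound reflects the local Brownian behaviour of the corresponding scaling limit: the Airy$_2$ process is $\fh(\ft,\cdot;\mathfrak{d}_0)$ up to the parabolic shift and is known to be locally absolutely continuous with respect to Brownian motion of diffusion coefficient $2$, and in the prelimit the estimate follows from classical Baik-Deift-Johansson-type Tracy-Widom tail estimates for TASEP with step initial condition. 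Second, I would transfer the bound to general $\fh_0\in\UC$ using the preservation-of-max and affine invariance properties of Proposition~\ref{symmetries}: writing $\fh(\ft,\fx;\fh_0)$ as a supremum over narrow-wedge evolutions shifted by $\fh_0(\fy)$, the at-most-linear growth of $\fh_0$ together with the Gaussian-in-$\fy$ decay of the Airy kernel $\fT_{\ft,\fx}$ localises the argmax to a bounded $\fy$-window with light tails, so the spatial increment in $\fx$ is dominated by the increments of finitely many narrow-wedge evolutions plus a controllable remainder.

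The hard part will be making the Brownian-scale tail uniform in $\ep$ and in the initial data. One-point tails for TASEP with the specific initial condition $h^\ep_0$ are not directly available off the shelf, and I see two plausible routes: either (a) a monotone coupling of $h^\ep_0$ with step-initial TASEP configurations bracketing it from above and below on $[-M,M]$, reducing the estimate to the well-studied step case; or (b) direct kernel estimates on $\fK^{\hypo(\fh^\ep_0)}_{\ft/2}\fK^{\epi(\fg)}_{-\ft/2}$ and its prelimit, using the trace-class continuity of $(\fh_0,\fg)\mapsto\fK^{\hypo(\fh_0)}_{\ft/2}\fK^{\epi(\fg)}_{-\ft/2}$ alluded to in the remark following the KPZ fixed-point formula. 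Either route, combined with the hypothesis $\fh^\ep_0\to\fh_0$ locally in $\UC$, will close the argument; the resulting exponent $1/2$ is sharp because on small scales $\fh(\ft,\cdot)$ looks Brownian.
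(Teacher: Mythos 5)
Your overall frame matches the paper's: the stated proof goes through the Kolmogorov continuity theorem applied to two-point increment moments of $\fh^\ep(\ft,\cdot)$, uniformly in $\ep$, and then passes to the limit. Where you diverge is in how the increment bound is obtained, and the two concrete routes you sketch both have difficulties.

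Route (a), monotone bracketing by step initial data, only yields pointwise two-sided bounds $\fh^-(\ft,\fx)\le\fh^\ep(\ft,\fx)\le\fh^+(\ft,\fx)$. That is not enough to control the increment $\fh^\ep(\ft,\fx_2)-\fh^\ep(\ft,\fx_1)$: the upper and lower brackets can differ by an $O(1)$ gap at every point, and there is no cancellation mechanism coming from the coupling that would force $\fh^+(\ft,\fx_2)-\fh^-(\ft,\fx_1)$ to scale like $|\fx_2-\fx_1|^{1/2}$. You would need a coupling that controls the increments themselves, not the values, and ordinary height monotonicity does not provide that.

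The transfer step via preservation of max is logically problematic at this point in the paper's development. Preservation of max (Proposition~\ref{symmetries}(vi)) and the resulting variational formula (Theorem~\ref{thm:airyvar}) are proved in Section~\ref{sec:variational} by taking limits of coupled TASEP copies, and those limits require precisely the tightness that Theorem~\ref{reg} is meant to supply (see the remark immediately after it: this theorem gives the compactness needed for the Markov property). Beyond this circularity, the variational representation $\fh(\ft,\fx;\fh_0)=\sup_\fy\{\fh(\ft,\fx;\mathfrak d_\fy)+\fh_0(\fy)\}$ reduces spatial regularity in $\fx$ to \emph{joint} regularity of the Airy sheet $\hat\aip(\fx,\fy)$ in both arguments, uniformly over a localized $\fy$-window; a uniform H\"older constant over a continuum of slices is itself a two-parameter regularity statement, essentially equivalent to what you are trying to prove, so localization of the argmax alone does not close the argument.

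Your route (b) is the one that is actually in the spirit of the paper's proof. The paper's argument relies on the path-integral representation \eqref{eq:twosided-path} (and its TASEP prelimit \eqref{eq:path-int-kernel-TASEPgem}), where the product of heat kernels $e^{(\fx_{i+1}-\fx_i)\p^2}\bar\P_{\fa_{i+1}}$ makes the Brownian bridge structure explicit in the formula itself; one then writes the two-point distribution as an expectation over a Brownian motion times a correction factor, exactly as in the sketch of the Local Brownian Behavior theorem, and the Kolmogorov moment bound comes out of controlling that correction uniformly in $\ep$. So the missing idea is not a new inequality but this specific use of the path-integral kernel: it turns the increment estimate into a Brownian estimate with a perturbative remainder, avoiding both the bracketing problem and the circularity with the variational formula. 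If you develop (b) along those lines you recover the paper's argument; as written, (a) and the max-transfer step do not work.
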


\noindent 
The proof proceeds through an application of the Kolmogorov continuity theorem, which reduces regularity to two-point functions, and depends heavily on the representation \eqref{eq:twosided-path} for the two-point function in terms of path integral kernels. We prefer to skip the details.

\begin{remark}
Since the theorem shows that this regularity holds uniformly (in $\ep>0$) for the approximating $\fh^{\ep}(\ft,\cdot)$'s, we get the compactness needed for the proof of the Markov property.
\end{remark}

\begin{theorem}[Local Brownian behavior]
For any initial condition $\fh_0\in\UC$ the KPZ fixed point $\fh$ is locally Brownian in space in the sense that for each $\fy\in\R$, the finite dimensional distributions of 
\[\mathfrak{b}^{\pm}_\ep(\fx)= \ep^{-1/2} \bigl(\fh(\ft,\fy \pm \ep \fx)-\fh(\ft,\fy) \bigr)\]
converge, as $\ep\to 0$, to those of Brownian motions with diffusion coefficient $2$.
\end{theorem}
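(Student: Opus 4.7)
The plan is to work directly with the path integral formula \eqref{eq:twosided-path} for the finite-dimensional distributions of $\fh(\ft,\cdot)$ and to exploit the fact that the heat semigroup $e^{\fs\partial^2}$ is the transition semigroup of a Brownian motion with diffusion coefficient $2$. By shift invariance (Proposition~\ref{symmetries}\,(iii)) I take $\fy=0$, and by reflection invariance (iv) it is enough to treat $\mathfrak{b}^+_\ep$. Fix $0<\fx_1<\dotsm<\fx_n$ and $\fb_1,\dots,\fb_n\in\R$, and let
\[G_\ep(a,\fa_1,\dots,\fa_n)=\pp\bigl(\fh(\ft,0)\le a,\;\fh(\ft,\ep\fx_i)\le \fa_i,\ i=1,\dots,n\bigr).\]
A standard conditioning argument reduces the statement to showing the pointwise limit
\[\partial_a G_\ep(a,a+\ep^{1/2}\fb_1,\dots,a+\ep^{1/2}\fb_n)\;\longrightarrow\;p_0(a)\,\pp\bigl(B(\fx_i)\le \fb_i,\ i=1,\dots,n\bigr),\]
where $p_0$ is the one-point density of $\fh(\ft,0)$ and $B$ is a Brownian motion of diffusion coefficient $2$ started from $0$, together with uniform integrability of the left-hand side in $a$. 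Integrating over $a$ then produces the Brownian joint CDF on the right.

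The formula \eqref{eq:twosided-path} applied to the $n+1$ points $0,\ep\fx_1,\ldots,\ep\fx_n$ at levels $a, a+\ep^{1/2}\fb_1,\ldots, a+\ep^{1/2}\fb_n$ is a Fredholm determinant whose inner factor is a product of blocks $\bar\P_{a+\ep^{1/2}\fb_{i-1}}\, e^{\ep(\fx_i-\fx_{i-1})\partial^2}\, \bar\P_{a+\ep^{1/2}\fb_i}$ (with $\fb_0=0$). The unitary dilation--translation $T_\ep f(u)=\ep^{1/4}f(a+\ep^{1/2}u)$ on $L^2(\R)$ conjugates every such block precisely into $\bar\P_{\fb_{i-1}}\, e^{(\fx_i-\fx_{i-1})\partial^2}\, \bar\P_{\fb_i}$, i.e.\ into the Brownian transition block for diffusion coefficient $2$, and therefore transforms the inner part of \eqref{eq:twosided-path} into the Fredholm-determinant representation of the Brownian joint event $\{B(\fx_i)\le \fb_i\}$. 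The only scale-sensitive ingredient is the outer operator $\fK^{\hypo(\fh_0)}_{\ft,\ep\fx_n}$; using the group law \eqref{eq:groupS2} it may be identified with $\fK^{\hypo(\fh_0)}_{\ft,0}$ up to factors already folded into the inner chain, and under $T_\ep$ its kernel becomes $\ep^{1/2}\fK^{\hypo(\fh_0)}_{\ft,0}(a+\ep^{1/2}u,a+\ep^{1/2}v)$. Smoothness of this kernel (visible from \eqref{eq:Kep-expansion} and the analyticity of $\fT_{\ft,\fx}$ for $\ft>0$) reduces it at leading order to $\ep^{1/2}\fK^{\hypo(\fh_0)}_{\ft,0}(a,a)$ times a rank-one kernel in $u,v$. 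Differentiating $G_\ep$ in $a$ collapses the rank-one contribution and produces precisely the factor $p_0(a)=\partial_a\pp(\fh(\ft,0)\le a)$, leaving behind the Brownian joint CDF.

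The main obstacle is to turn the above heuristic ``rank-one reduction'' into a rigorous statement at the level of Fredholm determinants, with enough uniform trace-class control in $a$ and $\ep$ to exchange the $\ep\to 0$ limit with the $a$-differentiation and the subsequent $a$-integration. The cleanest route is to establish the result first for narrow-wedge initial data $\fh_0=\mathfrak{d}_\fu$, where the explicit Airy$_2$ representation of $\fK^{\hypo(\fh_0)}_{\ft,\fx}$ makes the required continuity and trace-class estimates straightforward, and then to extend to general $\fh_0\in\UC$ using the preservation-of-max property of Proposition~\ref{symmetries}\,(vi) together with the uniform H\"older-$\beta$ regularity of Theorem~\ref{reg}, which provides the tightness of the rescaled two-point function needed to propagate convergence from one class of initial data to another.
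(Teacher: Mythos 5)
Your opening two paragraphs reproduce, in somewhat more explicit form, the argument the paper actually uses (following \cite{quastelRemAiry1}): fix the value at the origin, observe that the inner chain $e^{(\fx_1-\fx_M)\p^2}\bar\P_{\fa_1}\dotsm\bar\P_{\fa_M}$ in the path-integral formula \eqref{eq:twosided-path} is invariant under the diffusive dilation $T_\ep$ and therefore produces the Brownian transition probabilities, and then argue that the contribution of the hypo operator $\fK^{\hypo(\fh_0)}_\ft$ degenerates in a controlled way so that only the one-point density $p_0(a)$ survives. The paper packages this as the appearance of a correction factor $\phi^\ep_{\fx,\fa}(\fu,\fB(\fx_n))$ inside the Brownian expectation which is then shown to tend to $1$; your ``rank-one reduction'' of $\ep^{1/2}\fK^{\hypo(\fh_0)}_{\ft,0}(a+\ep^{1/2}u,a+\ep^{1/2}v)$ is the same phenomenon seen from the operator side. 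Up to here you are on the paper's track.

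The gap is in your final paragraph, which is where the rigor is supposed to come from. The proposal to ``establish the result first for narrow-wedge data and then extend to general $\fh_0$ using preservation of max plus H\"older tightness'' does not close. Preservation of max (Proposition~\ref{symmetries}(vi)), iterated into the variational formula of Theorem~\ref{thm:airyvar}, gives $\fh(\ft,\fx;\fh_0)=\sup_\fy\{\fh(\ft,\fx;\mathfrak{d}_\fy)+\fh_0(\fy)\}$, but local Brownian behavior is \emph{not} stable under taking suprema over a family of processes, even if each member is locally Brownian. To pass the narrow-wedge result through the variational formula one would need to show that near a fixed $\fx$ the supremum is attained at an (almost surely unique) maximizer $\fy^*(\fx)$ which is stable on the $\ep$-scale, so that locally $\fh(\ft,\cdot;\fh_0)$ inherits the increments of a single $\fh(\ft,\cdot;\mathfrak{d}_{\fy^*})$. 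Neither uniqueness nor stability of the maximizer is established in this paper (indeed the Airy sheet is only constructed along subsequences, as remarked in Section~\ref{sec:variational}), and Theorem~\ref{reg} only yields tightness of the rescaled increments, which gives you subsequential limits but no identification mechanism. The paper avoids this entirely: the ``arguments of \cite{quastelRemAiry1}'' handle general $\fh_0$ directly by estimating $\phi^\ep$ through the trace-class bounds on $\fK^{\hypo(\fh_0)}_\ft$, which is what your first two paragraphs would also need to supply rather than deferring to the narrow-wedge case.

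A smaller, technical point worth noting: the constant kernel $\mathbf{1}\otimes\mathbf{1}$ that appears in your leading-order expansion of $T_\ep\fK^{\hypo(\fh_0)}_{\ft,0}T_\ep^{-1}$ is not trace class on $L^2(\R)$, so the ``rank-one'' heuristic has to be interpreted inside the full product with the inner chain (which restores integrability) and not as an operator statement on its own; the conditional form used in the paper, with the weight $\phi^\ep$ sitting inside a Brownian expectation, is exactly the device that makes this precise.
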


\begin{proof}[A very brief sketch of the proof]
The proof is based again on the arguments of \cite{quastelRemAiry1}.
One uses \eqref{eq:twosided-path} and Brownian scale invariance to show that
\[
   \pp \bigl(\fh(\ft,\ep\fx_1)\leq \fu+\sqrt{\ep}\fa_1 , \ldots, \fh(\ft,\ep\fx_n)\leq \fu+\sqrt{\ep}\fa_n\, \big|\,\fh(\ft,0)=\fu\bigr) =\ee\left(\uno{\fB(\fx_i)\le\fa_i, i=1,\ldots,n}\,\phi^\ep_{\fx,\fa}(\fu,\fB(\fx_n))\right),
\]
for some explicit function $\phi^\ep_{\fx,\fa}(\fu,\mathbf{b})$.
The Brownian motion appears from the product of heat kernels in \eqref{eq:twosided-path}, while $\phi^\ep_{\fx,\fa}$ contains the dependence on everything else in the formula (the Fredholm determinant structure and $\fh_0$ through the hypo operator $\fK^{\hypo(\fh_0)}_\ft$).
Then one shows that $\phi^\ep_{\fx,\fa}(\fu,\mathbf{b})$ goes to 1 in a suitable sense as $\ep\to0$.
\end{proof}

\begin{proposition}[Time regularity]\label{holder}
Fix $\fx_0\in \R$ and initial data $\fh_0\in \UC$.  For $\ft>0$, $ \fh(\ft,\fx_0)$ is locally H\"older $\alpha$ in $\ft$ for any $\alpha<1/3$.  
\end{proposition}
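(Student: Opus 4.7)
The plan is to apply Kolmogorov's continuity criterion on any compact subinterval $[\ft_0,T]\subset(0,\infty)$: it suffices to produce, for every integer $p\ge1$, a constant $C_{p,T}$ with
\[
\ee\bigl[|\fh(\ft_2,\fx_0)-\fh(\ft_1,\fx_0)|^{p}\bigr]\le C_{p,T}\,|\ft_2-\ft_1|^{p/3}\qquad(\ft_0\le \ft_1<\ft_2\le T),
\]
since then Kolmogorov yields H\"{o}lder regularity of order $\alpha$ for every $\alpha<\tfrac{1}{3}-\tfrac{1}{p}$, and taking $p\to\infty$ recovers the full range $\alpha<1/3$.

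To obtain this moment bound I would combine the Markov property (Theorem~\ref{thm:markovprop}), the 1:2:3 scaling invariance (Proposition~\ref{symmetries}(i)) and the spatial regularity estimate (Theorem~\ref{reg}). Writing $\fs=\ft_1$, $\tau=\ft_2-\ft_1$ and $\tilde\fh_{\fs}(\fy)=\fh(\fs,\fy)-\fh(\fs,\fx_0)$, the equivariance of the fixed point under constant shifts of the initial data (the $c=0$ case of Proposition~\ref{symmetries}(v)) together with the Markov property gives, conditionally on $\mathcal{F}_{\fs}$, $\fh(\ft_2,\fx_0)-\fh(\fs,\fx_0)\stackrel{d}{=}\fh(\tau,\fx_0;\tilde\fh_{\fs})$. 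The scaling with $\alpha=\tau^{1/3}$ rewrites the right-hand side as $\tau^{1/3}\fh(1,\tau^{-2/3}\fx_0;\fh_{\fs}^{(\tau)})$, where $\fh_{\fs}^{(\tau)}(\fy):=\tau^{1/3}\tilde\fh_{\fs}(\tau^{-2/3}\fy)$. Theorem~\ref{reg} implies that on any fixed compact window around $\fx_0$ the profile $\fh(\fs,\cdot)$ is H\"{o}lder-$\beta$ for every $\beta<1/2$ with a random local norm whose tails are uniform in $\fs\in[\ft_0,T]$; consequently, on bounded sets of $\fy$,
\[
|\fh_{\fs}^{(\tau)}(\fy)|\le \|\fh(\fs,\cdot)\|_{\beta,I}\,\tau^{(1-2\beta)/3}|\fy|^{\beta},
\]
which stays bounded, and in fact tends to $0$, as $\tau\downarrow0$. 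Thus with overwhelming probability $\fh_{\fs}^{(\tau)}$ lies in a bounded subset of local $\UC$ with $\fh_{\fs}^{(\tau)}(0)=0$.

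The final ingredient is a uniform one-point moment bound: for any class $\mathcal{A}\subset\UC$ of profiles $\fg$ with $\fg(\fx_0)=0$, uniform local bounds and at most linear growth, I will need $\sup_{\fg\in\mathcal{A}}\ee[|\fh(1,\fx_0;\fg)|^{p}]<\infty$. The lower tail is immediate from skew time reversal (Proposition~\ref{symmetries}(ii)): with $\ff_M=\mathfrak{d}_{\fx_0}+M$,
\[
\pp\bigl(\fh(1,\fx_0;\fg)\le -M\bigr)=\pp\bigl(\fh(1,\fx_0;\ff_M)\le -\fg(\fx_0)\bigr)=\pp\bigl(\aip_2(\fx_0)\le -M\bigr),
\]
with Tracy--Widom GUE lower tail $e^{-cM^{3}}$. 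For the upper tail I would dominate $\fg\in\mathcal{A}$ from above by an explicit $\UC$-majorant (an affine or half-flat profile shifted by the uniform local bound) and invoke preservation of maxima (Proposition~\ref{symmetries}(vi)), reducing to the known Tracy--Widom upper tails of $\aip_1$ or $\aip_2$. Plugging these bounds back through the scaling identity produces the required $\tau^{p/3}$ moment estimate, and Kolmogorov closes the argument.

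The hardest step will be this last uniform one-point tail bound, and specifically the upper tail: skew time reversal collapses the lower tail to an explicit Tracy--Widom computation, but the upper tail requires a careful comparison principle that builds a tractable $\UC$-majorant of the random initial data $\fh_{\fs}^{(\tau)}$. This is intertwined with strengthening Theorem~\ref{reg} so that the H\"{o}lder norm $\|\fh(\fs,\cdot)\|_{\beta,I}$ has all moments finite uniformly in $\fs\in[\ft_0,T]$, which is the other delicate point in implementing the plan.
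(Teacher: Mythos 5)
Your plan is genuinely different from what the paper does, and it has a real gap where you yourself flag it. The paper's proof is far more direct and bypasses moment bounds entirely: it invokes the Airy sheet variational formula (Theorem~\ref{thm:airyvar}),
\[
\fh(\ft,\fx_0;\fh_0)=\sup_\fy\bigl\{\fh(\ft,\fx_0;\mathfrak{d}_\fy)+\fh_0(\fy)\bigr\},
\]
together with the Markov property (to reduce to $\fh_0\in\mathscr C^\beta$ and check regularity at $\ft=0$) and the a.s. spatial bound $|\aip_2(\fx)|\le R(1+|\fx|^\beta)$. Plugging $|\fh_0(\fy)-\fh_0(\fx_0)|\le R(|\fy-\fx_0|^\beta+|\fy-\fx_0|)$ into the sup turns $|\fh(\ft,\fx_0)-\fh_0(\fx_0)|$ into a deterministic optimization of the form $\sup_\fy\bigl(R(\cdots)-\ft^{-1}(\fy-\fx_0)^2\bigr)$, and balancing $|\fy-\fx_0|^\beta$ against $\ft^{-1}(\fy-\fx_0)^2$ gives $\lesssim\ft^{\beta/(2-\beta)}=\ft^\alpha$. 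No Kolmogorov criterion, no tail estimates.

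The concrete problem with your route is the upper-tail step. The rescaled data $\fh_\fs^{(\tau)}(\fy)=\tau^{1/3}\bigl(\fh(\fs,\tau^{-2/3}\fy)-\fh(\fs,\fx_0)\bigr)$ does \emph{not} lie in a bounded subset of $\UC$ as $\tau\downarrow0$: from the linear bound $\fh(\fs,\fz)\le C(1+|\fz|)$, its slope at large $|\fy|$ is of order $C\tau^{-1/3}$, which diverges. There is therefore no fixed affine or half-flat majorant $\fg_+\in\UC$ with $\fh_\fs^{(\tau)}\le\fg_+$ uniformly in small $\tau$, so the comparison via preservation of max to a fixed Tracy--Widom tail does not close. (Affine invariance cannot repair this: a slope $c\sim\tau^{-1/3}$ adds $c^2\ft/4\sim\tau^{-2/3}$ at $\ft=1$, and undoing the scaling still leaves a factor $\tau^{-1/3}$.) What controls the divergent slope in the paper's argument is precisely the parabola $-\ft^{-1}(\fy-\fx_0)^2$ hidden in $\fh(\ft,\fx_0;\mathfrak{d}_\fy)$, which the variational formula supplies and a pure initial-data majorization cannot reproduce. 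The missing idea is Theorem~\ref{thm:airyvar}; once you use it, you get a pathwise bound and the whole Kolmogorov/moment scaffolding (and the delicate uniform-in-$\fs$ strengthening of Theorem~\ref{reg} you correctly identify as another obstacle) becomes unnecessary.
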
 

\noindent The proof uses the variational formula for the fixed point, we sketch it in the next section.

\subsection{Variational formulas and the Airy sheet}\label{sec:variational}

\begin{definition}[Airy sheet]  The two parameter process 
\[\aip(\fx,\fy) ~ = ~ \fh(1,\fy; \mathfrak{d}_\fx)+ (\fx-\fy)^2 \] 
is called the \emph{Airy sheet} \cite{cqrFixedPt}.  
Fixing either one of the variables, it is an Airy$_2$ process in the other.  
We also write
\[\hat\aip(\fx,\fy) = \aip(\fx,\fy)-  (\fx-\fy)^2.\]
\end{definition} 

\begin{remark}
The KPZ fixed point inherits from TASEP a canonical coupling between the processes started with different initial data (using the same `noise').
It is this the property that allows us to define the two-parameter Airy sheet.
An annoying difficulty is that we cannot prove that this process is unique.
More precisely, the construction of the Airy sheet in \cite{fixedpt} goes through using tightness of the coupled processes at the TASEP level and taking subsequential limits, and at the present time there seems to be no way to assure that the limit points are unique. 
This means that we have actually constructed `an' Airy sheet, and the statements below should really be interpreted as about any such
limit.

\noindent It is natural to wonder whether the fixed point formulas at our disposal determine the joint probabilities $\pp(\aip(\fx_i,\fy_i)\le \fa_i, i=1,\ldots,M)$ for the Airy sheet.
Unfortunately, this is not the case.
In fact, the most we can compute using our formulas is \[\pp\bigl(\hat\aip(\fx,\fy)  \le \ff(\fx)+\fg(\fy),\,\fx,\fy\in\R\bigr)  = \det\left(\fI-\fK^{\hypo(-\fg)}_{1} \fK^{\epi(\ff)}_{-1}\right).\]
Suppose we want to compute the two-point distribution for the Airy sheet $\pp(\hat\aip(\fx_i,\fy_i)  \le \fa_i,\,i=1,2)$ from this.
We would need to choose $\ff$ and $\fg$ taking two non-infinite values, which yields a formula for $\pp(\hat\aip(\fx_i,\fy_j)  \le \ff(\fx_i)+\fg(\fy_j),\,i,j=1,2)$, and thus we need to take $\ff(\fx_1)+\fg(\fy_1)=\fa_1$, $\ff(\fx_2)+\fg(\fy_2)=\fa_2$ and $\ff(\fx_1)+\fg(\fy_2)=\ff(\fx_2)+\fg(\fy_1)=L$ with $L\to\infty$.
But $\{\ff(\fx_i)+\fg(\fy_j),\,i,j=1,2\}$ only spans a 3-dimensional linear subspace of $\R^4$, so this is not possible.
\end{remark}

The preservation of max property allows us to write a  variational formula for the KPZ fixed point in terms of the Airy sheet.
\begin{theorem}[Airy sheet variational formula]\label{thm:airyvar}
One has the identities
\begin{equation}\label{eq:var}
\fh(\ft,\fx;\fh_0) = \sup_\fy\big\{ \fh(\ft,\fx;\mathfrak{d}_\fy) + \fh_0(\fy)\big\}  \stackrel{\uptext{dist}}{=}  \sup_\fy\Big\{ \ft^{1/3}\hat\aip(\ft^{-2/3} \fx,\ft^{-2/3} \fy) + \fh_0(\fy)\Big\}.
\end{equation}
In particular, the Airy sheet satisfies the \emph{semi-group property}: If $\hat\aip^1$ and $\hat\aip^2$ are independent copies and $\ft_1+\ft_2=\ft$ are all positive, then
\begin{equation*}
 \sup_\fz\left\{ \ft_1^{1/3}\hat\aip^1(\ft_1^{-2/3} \fx,\ft_1^{-2/3} \fz) +  \ft_2^{1/3}\hat\aip^2(\ft_2^{-2/3} \fz,\ft_2^{-2/3} \fy) \right\} \stackrel{\uptext{dist}}{=} \ft^{1/3}\hat\aip^1(\ft^{-2/3} \fx,\ft^{-2/3} \fy).
\end{equation*}
\end{theorem}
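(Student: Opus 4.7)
The plan is to prove the two displayed identities separately---the pathwise variational formula $\fh(\ft,\fx;\fh_0)=\sup_\fy\{\fh(\ft,\fx;\mathfrak{d}_\fy)+\fh_0(\fy)\}$ and its rewriting in terms of $\hat\aip$---and then to derive the semigroup property as a corollary of the first identity via the Markov property.

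For the pathwise identity, the key observation is that every $\fh_0\in\UC$ admits the trivial decomposition $\fh_0(\fx)=\sup_\fy\{\mathfrak{d}_\fy(\fx)+\fh_0(\fy)\}$, because $\mathfrak{d}_\fy(\fx)=-\infty$ unless $\fx=\fy$, in which case it equals $0$. By affine invariance (property (v) of Proposition~\ref{symmetries} with $c=0$), adding a constant to the initial data shifts the solution by the same constant, so $\fh(\ft,\fx;\mathfrak{d}_\fy+\fh_0(\fy))=\fh(\ft,\fx;\mathfrak{d}_\fy)+\fh_0(\fy)$. The preservation of max property (vi) then yields the variational identity for any finite pointwise max of shifted narrow wedges; to pass to the sup over all of $\R$ one approximates $\fh_0$ monotonically from below by such finite maxima over a countable dense set of $\fy$'s and uses continuity of $\fh_0\mapsto\fh(\ft,\fx;\fh_0)$ in the local $\UC$-topology, which is inherited from the TASEP-level couplings used to construct $\fh$.

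For the distributional identity, apply the $1$:$2$:$3$ scaling invariance (property (i)) with $\alpha=\ft^{1/3}$ to a narrow-wedge initial condition. A short computation shows that under this rescaling a narrow wedge maps to another narrow wedge after relabeling its location, so jointly in $(\fx,\fy)$, under the canonical coupling,
\begin{equation*}
\fh(\ft,\fx;\mathfrak{d}_\fy)\;\stackrel{\uptext{dist}}{=}\;\ft^{1/3}\fh(1,\ft^{-2/3}\fx;\mathfrak{d}_{\ft^{-2/3}\fy})\;=\;\ft^{1/3}\hat\aip(\ft^{-2/3}\fy,\ft^{-2/3}\fx),
\end{equation*}
the last equality being the definition of $\hat\aip$. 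Taking $\sup_\fy$ on both sides and invoking the labeling symmetry $\hat\aip(a,b)\stackrel{\uptext{dist}}{=}\hat\aip(b,a)$ (which itself follows from skew time reversal, property (ii)) produces the second displayed identity. The semigroup property is then a direct corollary: by the Markov property (Theorem~\ref{thm:markovprop}), for $\ft_1+\ft_2=\ft$ one has $\fh(\ft,\fy;\mathfrak{d}_\fx)=\sup_\fz\{\fh(\ft_2,\fy;\mathfrak{d}_\fz)+\fh(\ft_1,\fz;\mathfrak{d}_\fx)\}$ with the two layers driven by independent noise; rewriting each summand via scaling in terms of the corresponding independent Airy sheet, and rewriting the left-hand side via a single scaling, yields exactly the stated identity.

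The main obstacle is the extension of preservation of max from finite collections to an uncountable supremum: this requires joint regularity of the narrow-wedge field $\fy\mapsto\fh(\ft,\fx;\mathfrak{d}_\fy)$ (inherited from Airy-sheet regularity via the distributional identification above) together with commutation of the fixed-point semigroup with monotone limits in the local $\UC$-topology. A secondary subtlety, already flagged in the remarks surrounding the Airy sheet, is that $\hat\aip$ is only known to exist as a subsequential limit, so all the identities here must be read as holding for any such limiting Airy sheet equipped with its canonical TASEP-inherited coupling.
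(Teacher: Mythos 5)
Your argument follows the same route as the paper's: approximate $\fh_0$ from below by initial data taking finite values at finitely many points, combine preservation of max with the shift property to get the finite variational identity, pass to the limit, and deduce the distributional statement by $1$:$2$:$3$ scaling; the semigroup identity is the Markov property plus the variational formula applied twice, just as you sketch. Two small imprecisions worth fixing: (a) for the constant-shift step you cite Proposition~\ref{symmetries}(v) with $c=0$, but that is a distributional identity, whereas the first equality in \eqref{eq:var} is pathwise — what is needed (and what the paper uses) is the elementary pathwise fact $\fh(\ft,\fx;\fh_0+\fa)=\fh(\ft,\fx;\fh_0)+\fa$; (b) the slot symmetry $\hat\aip(a,b)\stackrel{\uptext{dist}}{=}\hat\aip(b,a)$ that you invoke to rearrange the arguments after scaling must hold at the level of finite-dimensional distributions in $\fy$ (for fixed $\fx$), and to get that from skew time reversal you need to apply it with $\ff$ supported at several points and combine it with preservation of max, not just the one-point statement — a detail the paper also glosses over, so your making it explicit is a genuine improvement even if the justification given is a little thin.
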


\begin{proof}
Let $\fh_0^{n}$ be a sequence of initial conditions taking finite values $\fh^n_0(\fy^n_i)$ at $\fy^n_i$, $i=1,\ldots, k_n$, and $-\infty$ everywhere else, which converges to $\fh_0$ in $\UC$ as $n\to\infty$.
By repeated application of Proposition~\ref{symmetries}(v) (and the easy fact that $\fh(\ft,\fx;\fh_0+\fa)=\fh(\ft,\fx;\fh_0)+\fa$ for $\fa\in\R$) we get
\[\fh(\ft,\fx;\fh^n_0)=\sup_{i=1,\ldots,k_n}\big\{\fh(\ft,\fx;\mathfrak{d}_{\fy^n_i})+\fh^n_0(\fy^n_i)\big\},\]
and taking $n\to\infty$ yields the result (the second equality in \eqref{eq:var} follows from scaling invariance, Proposition~\ref{symmetries}).
\end{proof}

One of the interests in this variational formula is that it leads to proofs of properties of the fixed point, as we already mentioned in earlier sections.

\begin{proof}[Proof of Proposition~\ref{symmetries}(iv)]
The fact that the fixed point is invariant under translations of the initial data is straightforward, so we may assume $\fa=0$.
By Theorem~\ref{thm:airyvar} we have
\begin{align*}
\fh(\ft,\fx;\fh_0+c\fx)&\stackrel{\uptext{dist}}{=}\sup_\fy\Big\{ \ft^{1/3}\aip(\ft^{-2/3} \fx,\ft^{-2/3} \fy)-\ft^{-1}(\fx-\fy)^2 + \fh_0(\fy)+c\fy\Big\}\\
&\stackrel{\hphantom{\uptext{dist}}}{=}\sup_\fy\Big\{ \ft^{1/3}\aip(\ft^{-2/3} \fx,\ft^{-2/3}(\fy+c\ft/2))-\ft^{-1}(\fx-\fy)^2 + \fh_0(\fy+c\ft/2)+c\fx+c^2\ft/4\Big\}\\
&\stackrel{\uptext{dist}}{=}\sup_\fy\Big\{ \ft^{1/3}\hat\aip(\ft^{-2/3} \fx,\ft^{-2/3}\fy)+\fh_0(\fy+c\ft/2)+c\fx+c^2\ft/4\Big\}\\
&\stackrel{\hphantom{\uptext{dist}}}{=}\fh(\ft,\fx;\fh_0(\fx+c\ft/2))+c\fx+c^2\ft/4.\qedhere
\end{align*}
\end{proof}

\begin{proof}[Sketch of the proof of Theorem~\ref{holder}]
Fix $\alpha<1/3$ and choose $\beta<1/2$ so that $\beta/(2-\beta)=\alpha$.
By the Markov property it is enough to assume that $\fh_0\in \mathscr C^\beta$ and check the H\"older-$\alpha$ regularity at time 0.
By space regularity of the Airy$_2$ process (proved in \cite{quastelRemAiry1}, but which also follows from Theorem~\ref{reg}) there is an $R<\infty$ a.s. such that $|\aip_2(\fx)|\le R(1+ |\fx|^{\beta})$, and making $R$ larger if necessary we may also assume $| \fh_0(\fx)- \fh_0( \fx_0)| \le R( |\fx-\fx_0|^{\beta} + |\fx-\fx_0|)$.
From the variational formula \eqref{eq:var}, $|\fh(\ft, \fx_0) - \fh(0,\fx_0)|$ is then bounded by
\begin{equation*}
\sup_{\fx\in\R}\Big(R  ( |\fx-\fx_0|^{\beta} + |\fx-\fx_0| + \ft^{1/3} +  \ft^{(1 - 2\beta)/3 }|\fx|^\beta)  - \tfrac1\ft (\fx_0-\fx)^2\Big).
\end{equation*}
The supremum is attained roughly at $x-x_0=\ft^{-\eta}$ with $\eta$ such that $|\fx-\fx_0|^{\beta}\sim\tfrac1\ft (\fx_0-\fx)^2$.
Then $\eta=1/(2-\beta)$ and the supremum is bounded by a constant multiple of $\ft^{\beta/(2-\beta)}=\ft^\alpha$, as desired.
\end{proof}

\section{The 1:2:3 scaling limit of TASEP}
\label{sec:TASEPlimit}

In this section we will prove that for a large class of initial data the growth process of TASEP converges to the KPZ fixed point defined in Section~\ref{sec:KPZfp}. To this end we consider the TASEP particles to be distributed with a density close to $\frac{1}{2}$, and take the following scaling of the height function $h$ from Section~\ref{sec:growth}:
\begin{equation}\label{eq:hep}
	\fh^{\ep}(\ft,\fx) = \ep^{1/2}\!\left[h_{2\ep^{-3/2}\ft}(2\ep^{-1}\fx) + \ep^{-3/2}\ft\right].
\end{equation}
We will always consider the linear interpolation of $\fh^{\ep}$ to make it a continuous function of $\fx\in \R$. Suppose that we have initial data  $X_0^\ep$ chosen to depend on $\ep$ in such a way that
\begin{equation}\label{xplim}
\fh_0=\lim_{\ep\to 0} \fh^{\ep}(0,\cdot)
\end{equation}
in the $\UC$ topology. For fixed $\ft>0$, we will prove that the limit 
\begin{equation}\label{eq:height-cvgce}
\fh(\ft,\fx;\fh_0)=\lim_{\ep\to0}\fh^{\ep}(\ft,\fx)
\end{equation}
exists, and take it as our \emph{definition} of the KPZ fixed point $\fh(\ft,\fx; \fh_0)$. We will often omit $\fh_0$ from the notation when it is clear from the context.

\begin{exercise} 
For any $\fh_0\in \UC$, we can find  initial data $X^\ep_0$  so that \eqref{xplim} holds.
\end{exercise}

\noindent We have the following convergence result for TASEP:

\begin{theorem}\label{thm:fullfixedpt}
For $\fh_0\in\UC$, let $X_0^\ep$ be initial data for TASEP such that the corresponding rescaled height functions $\fh_0^\ep$ converge to $\fh_0$ in the $\UC$ topology as $\ep\to0$.
Then the limit \eqref{eq:height-cvgce} exists (in distribution) locally in $\UC$ and is the KPZ fixed point with initial value $\fh_0$.
\end{theorem}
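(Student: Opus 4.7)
The plan is to start from the exact Fredholm determinant formula of Theorem~\ref{thm:tasepformulas} for TASEP with right-finite initial data and show that under the 1:2:3 scaling \eqref{eq:hep}, the kernel $K_t$ in \eqref{eq:Kt-2} converges, in an appropriate sense, to the Brownian scattering kernel $\fK^{\hypo(\fh_0)}_{\ft/2}\fK^{\epi(\fg)}_{-\ft/2}$ appearing in \eqref{eq:fpform}. By the Portmanteau argument combined with Theorem~\ref{reg} (spatial tightness), it suffices to prove convergence of finite-dimensional distributions, namely
\[
\pp\big(\fh^\ep(\ft,\fx_j)\le \fa_j,\,j=1,\ldots,M\big)\xrightarrow[\ep\to 0]{}\pp_{\fh_0}\big(\fh(\ft,\fx_j)\le \fa_j,\,j=1,\ldots,M\big).
\]
The left-hand side equals a Fredholm determinant on $\ell^2(\{n_1^\ep,\ldots,n_M^\ep\}\times\Z)$ with the choice $n_j^\ep=\ep^{-3/2}\ft+\ep^{-1}\fx_j+O(1)$ and $a_j^\ep=-\ep^{-1/2}\fa_j+O(1)$ dictated by \eqref{eq:hep}. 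The goal is therefore to upgrade this pointwise kernel convergence to trace-norm convergence, so that the Fredholm determinant passes through the limit.

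First I would handle the \emph{deterministic} pieces. A standard steepest-descent analysis of the contour integrals \eqref{def:sm}, \eqref{def:sn}, carried out by setting $w=\tfrac12(1-\ep^{1/2}\tilde w)$ so that the critical point sits at $\tilde w=0$, shows that after conjugation by an explicit exponential factor, $\SM_{-t,-n}^\ep$ and $\SN_{-t,n}^\ep$ converge to the Airy-type kernels $\fT_{\ft,\fx}$ of Definition~\ref{def:groupS}: the cubic from the third-order Taylor expansion of $\log w+(1-w)\log(1-w)$ around $w=\tfrac12$ produces the factor $e^{\ft w^3/3+\fx w^2}$ in \eqref{eq:fTdef}, and the powers of $\ep$ from the change of variables match the $\ep^{1/2}$ prefactor in \eqref{eq:hep}. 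The same computation shows $Q^{n_j-n_i}$ converges to the heat semigroup $e^{(\fx_j-\fx_i)\partial^2}$, which will correspond to the kernel $\mathcal{W}_{\fx_i,\fx_j}$ used in the path integral formula \eqref{eq:twosided-path}.

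The genuinely new step is the convergence of the \emph{hit operator} $\bar{\SN}_{-t,n}^{\epi(X_0^\ep)}$. Writing $\tau$ as the hitting time of the epigraph of $X_0^\ep$ by the geometric random walk $B_m$, I would rescale by $\fs=\ep\,m$ so that $B_m$ obeys a Donsker invariance principle to a Brownian motion $\fB$ with diffusion coefficient $2$, and the curve $\big(X_0^\ep(m+1)+1\big)_m$ rescales precisely to the graph of the initial profile encoded by $\fh_0$ (up to the parabolic correction built into \eqref{eq:hep}). Here the choice of the $\UC$ topology is essential: hypograph convergence of $\fh_0^\ep\to\fh_0$ is exactly the hypothesis under which the hitting time of the (rescaled) random walk to the epigraph converges in distribution to that of Brownian motion, by the Skorokhod representation plus an upper-semicontinuity argument for first passage times. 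Combined with the strong Markov property and the already-established convergence of $\fT_{\ft,\fx-\fs}$, this yields $\bar{\SN}_{-t,n}^{\epi(X_0^\ep)}\to \bar{\fT}^{\epi(\fg)}_{\ft,\fx}$ of Definition~\ref{def:hit}, with $\fg$ the $\LC$-reflection of $\fh_0$ demanded by the symmetry \eqref{skew}.

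With kernel convergence in hand, one passes to the path-integral form of both sides using the result behind Theorem~\ref{thm:alt-extendedToBVP} (together with the checks in Section~\ref{app:proofPathIntTASEP}), which rewrites the multipoint TASEP probability as a single Fredholm determinant on $L^2(\Z)$ built from $K_t^{(n_M)}$ and the projection chain $\bar\P_{a_1}Q^{n_2-n_1}\bar\P_{a_2}\cdots$. This matches the structure of \eqref{eq:twosided-path} after rescaling, and yields exactly the fixed-point formula \eqref{eq:fpform}. Finally, to go from finite-dimensional convergence to convergence in $\UC$ I would invoke Theorem~\ref{reg} for uniform spatial H\"older bounds together with tightness on compacts, and use that the limit process $\fh(\ft,\cdot)$ is already known to have continuous sample paths, so that local $\UC$ convergence is equivalent to uniform convergence on compact intervals. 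The main obstacle I anticipate is the \emph{uniform} trace-norm control needed to pass the Fredholm determinant through the limit: one must produce exponential tail bounds on the kernels $\SM^\ep$, $\SN^\ep$ that are uniform in $\ep$ (a standard but delicate saddle-point deformation of contours), and one must control the hit operator for initial data that are only upper semicontinuous with linear growth, which is where the auxiliary multiplication operators $M$ from the remark after \eqref{eq:fpform} enter.
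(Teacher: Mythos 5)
Your plan captures the first half of the paper's argument faithfully: starting from Theorem~\ref{thm:tasepformulas} under the scaling \eqref{eq:KPZscaling}, steepest descent with $w\mapsto\tfrac12(1-\ep^{1/2}\tilde w)$ for the contour-integral kernels $\SM_{-t,-n}$ and $\SN_{-t,n}$, and Donsker's invariance principle for the hit operator $\bar{\SN}^{\epi(X_0)}_{-t,n}$, together with the observation that $\UC$ convergence of hypographs is exactly what makes the hitting times converge. This is precisely Lemma~\ref{lem:KernelLimit1} and the surrounding discussion.

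However, there are two substantive steps in the paper's proof that your plan does not address, and which cannot be dispatched with the sentence ``This matches the structure of \eqref{eq:twosided-path} and yields exactly the fixed-point formula.'' First, Theorem~\ref{thm:tasepformulas} applies only to \emph{right-finite} TASEP data; for a general $\fh_0\in\UC$ one must cut the data off at macroscopic position $L$, prove (via uniform estimates of order $e^{-cL^3}$) that the cutoff has a negligible effect, and then exchange the limits $\ep\to0$ and $L\to\infty$. The kernel one obtains after steepest descent is therefore the \emph{one-sided} extended kernel $\fK^{\hypo(\fh_0)}_{\ft,\uptext{ext}}$ of Theorem~\ref{thm:Kfixedpthalf} with $\fh_0$ supported on $\R_{\le 0}$, and passing from this to the two-sided kernel \eqref{eq:Kexthalf-alt} requires the shift-invariance argument plus an $L\to\infty$ limit of $\fK^{\hypo(\fh_0^L)}_\ft$ (Theorem~\ref{thm:two-sided-lim-extended}). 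Second, your opening claim that the TASEP kernel $K_t$ ``converges to $\fK^{\hypo(\fh_0)}_{\ft/2}\fK^{\epi(\fg)}_{-\ft/2}$'' conflates two different things: what the kernel converges to is the extended kernel $\fK^{\hypo(\fh_0)}_{\ft,\uptext{ext}}$; the product form in \eqref{eq:fpform} only emerges after a further continuum limit in the barriers $\fa_i=\fg(\fx_i)$ on $[-R,R]$ followed by $R\to\infty$, which builds the Brownian scattering operator $\fK^{\epi(\fg)}_{-\ft/2}$ out of the projection chain $\bar\P_{\fa_1}e^{(\fx_2-\fx_1)\p^2}\bar\P_{\fa_2}\dotsm$ (Section~\ref{sec:continuumfixedpt}, using the identity $\fT_{\ft/2,-R}\wt\Theta^{\fg}_{-R,R}(\fT_{\ft/2,-R})^*\to\fI-\fK^{\epi(\fg)}_{-\ft/2}$). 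Without this step the finite-dimensional formula you obtain is not visibly the same object as \eqref{eq:fpform}, so the conclusion that the limit ``is the KPZ fixed point'' does not follow. Both of these gaps involve non-trivial analysis, not bookkeeping, and should be flagged and filled in any complete write-up.
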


In other words, under the 1:2:3 scaling, as long as the initial data for TASEP converges in $\UC$, the evolving TASEP height function converges to the KPZ fixed point.

We now sketch the proof.  Our goal is to  compute $\pp_{\fh_0} \bigl(\fh(\ft,\fx_i)\leq \fa_i,\;i=1,\ldots,M\bigr)$. We chose for simplicity the frame of reference
\begin{equation}\label{eq:x0conv} 
  \xx_0^{-1}(-1)=1,
\end{equation}
i.e. the particle labeled $1$ is initially the rightmost in $\Z_{<0}$. Then it follows from \eqref{defofh}, \eqref{eq:hep} and \eqref{eq:height-cvgce} that the required probability should coincide with the limit as $\ep\to 0$ of
\begin{equation}\label{eq:TASEPtofp}
	\pp_{X_0}\!\left(X_{2\ep^{-3/2}\ft}(\tfrac12\ep^{-3/2}\ft-\ep^{-1}\fx_i-\tfrac12\ep^{-1/2}\fa_i+1)>2\ep^{-1}\fx_i-2,\;i=1,\ldots,M\right).
\end{equation}
We therefore want to consider Theorem~\ref{thm:tasepformulas} with 
\begin{equation}\label{eq:KPZscaling}
t=2\ep^{-3/2}\ft,\qquad  n_i = \tfrac{1}{2}\ep^{-3/2}\ft-\ep^{-1}\fx_i-\tfrac12\ep^{-1/2}\fa_i+1,\qquad a_i=2\ep^{-1}\fx_i-2.
\end{equation}

\begin{remark} One might worry that the initial data  \eqref{eq:Kt-2} is assumed to be right finite.  In fact, one can obtain a formula without this condition,
but it is awkward.  On the other hand, one could always cut off the TASEP data far to the right, take the limit, and then remove the cutoff.
If we call the macroscopic position of the cutoff $L$, this means 
 the cutoff data is $X_0^{\ep,L}(n) = X^\ep_0(n)$ if 
$n  > -\lfloor\ep^{-1}L\rfloor$  and $X_0^{\ep,L}(n) = \infty$ if $n\le -\lfloor\ep^{-1}L\rfloor$.  This corresponds to replacing $\fh^{\ep}_0(\fx)$ by $\fh^{\ep,L}_0(\fx)$ with a straight line with slope $-2\ep^{-1/2}$ to the right of $\ep X^\ep_0(-\lfloor\ep^{-1}L\rfloor)\sim 2L$.    The question is 
whether one can justify the exchange of limits $L\to\infty$ and $\ep\to 0$.  It turns out not to be a problem because one can use the exact formula
to get uniform bound (in $\ep$, and over initial data in $\UC$ with bound $C(1+|x|)$) that the difference of \eqref{eq:TASEPtofp} computed with initial data $X^\ep_0$ and with initial data $X_0^{\ep,L}$ is bounded by $ C e^{-cL^3}$. 
\end{remark}

\begin{lemma}\label{lem:KernelLimit1}
Under the scaling \eqref{eq:KPZscaling} (dropping the $i$ subscripts) and assuming that \eqref{xplim} holds, if we set $z=2\ep^{-1}\fx+\ep^{-1/2}(u+\fa)-2$ and  $y'=\ep^{-1/2}v$, then we have for $\ft>0$, as $\eps \to 0$,
\begin{align}\label{eq:QRcvgce}
\fT^\ep_{-\ft,\fx}(v,u)&:=\ep^{-1/2}\SM_{-t,-n}(y',z)\longrightarrow{} \fT_{-\ft,\fx}(v,u),\\\label{eq:QRcvgce2}
\bar\fT^\ep_{-\ft,-\fx}(v,u)&:=\ep^{-1/2}\SN_{-t,n}(y',z)\longrightarrow {} \fT_{-\ft,-\fx}(v,u),\\\label{eq:QRcvgce3}
\bar{\fT}^{\ep,\epi(-\fh_0^-)}_{-\ft,-\fx}(v,u)&:=\ep^{-1/2} {\SN}^{\oepi(X_0)}_{-t,n}(y',z)\longrightarrow {} \bar{\fT}^{\epi(-\fh_0^-)}_{-\ft,-\fx}(v,u)
\end{align} 
pointwise, where $\fh_0^-(x)=\fh_0(-x)$ for $x\geq0$.  Here $\SM_{-t,-n}$ and $\SN_{-t,n}$ are defined in \eqref{def:sm} and \eqref{def:sn}. 
\end{lemma}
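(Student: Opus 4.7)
The plan is to handle the three convergences separately. For \eqref{eq:QRcvgce} and \eqref{eq:QRcvgce2}, which involve the explicit contour integrals \eqref{def:sm} and \eqref{def:sn}, I would use classical steepest descent around the saddle $w=\tfrac12$, the unique critical point of the leading-order exponent under the scaling \eqref{eq:KPZscaling}. For \eqref{eq:QRcvgce3}, the argument is probabilistic: the expectation defining $\bar{\SN}^{\oepi(X_0)}_{-t,n}$ should pass to a Brownian hitting-time expectation by a Donsker-type invariance principle, and the integrand inside the expectation converges pointwise by \eqref{eq:QRcvgce2}.

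For \eqref{eq:QRcvgce}, I would substitute the scaling \eqref{eq:KPZscaling} together with $z=2\ep^{-1}\fx+\ep^{-1/2}(u+\fa)-2$, $y'=\ep^{-1/2}v$ into the exponent of the integrand of \eqref{def:sm}, deform $\Gamma_0$ to pass through $w=\tfrac12$ along the steepest-descent direction, and parametrize $w=\tfrac12+\ep^{1/2}\tilde w$ with $\tilde w$ varying over a rescaled contour converging to $\langle$. Taylor expanding $\log(1-w)$ and $\log w$ about $w=\tfrac12$, one obtains
\[
\log\!\left(\frac{(1-w)^{n}}{2^{z-y'}w^{n+1+z-y'}}\,e^{t(w-1/2)}\right) = \log 2 - \tfrac{8}{3}\ft\,\tilde w^{3} + 4\fx\,\tilde w^{2} - 2(u-v)\tilde w + O(\ep^{1/2}).
\]
The $\ep^{-1}$ contribution coming from $n[\log(1-w)-\log w]$ is exactly cancelled by $t(w-\tfrac12)$ thanks to $t=2\ep^{-3/2}\ft$ and the leading part of $n$, and the $\ep^{-1/2}$ contribution is cancelled by the $-\ep^{-1}\fx$ correction in $n$. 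The prefactor $\ep^{-1/2}$ in front of $\SM_{-t,-n}$ is absorbed by $dw=\ep^{1/2}d\tilde w$, and the further change of variables $\tilde w = w/2$ turns the limiting integral into exactly the representation \eqref{eq:fTdef} of $\fT_{-\ft,\fx}(v,u)$. The proof of \eqref{eq:QRcvgce2} is analogous, with only the sign of the $\fx\tilde w^{2}$ term flipped.

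For \eqref{eq:QRcvgce3}, recall that $\bar{\SN}^{\oepi(X_0)}_{-t,n}(y',z)=\ee_{B_0=y'}\!\left[\SN_{-t,n-\tau}(B_\tau,z)\1{\tau<n}\right]$, where $B$ has transition kernel $Q$ (strictly leftward $\mathrm{Geom}[\tfrac12]$ jumps, with mean $-2$ and variance $2$) and $\tau$ is the first crossing of the curve $X_0(\cdot+1)$. Since $B_0=y'=\ep^{-1/2}v$, Donsker's invariance principle gives that the recentred rescaled process $s\mapsto\ep^{1/2}(B_{\lfloor\ep^{-1}s\rfloor}+2\ep^{-1}s)$ converges in law to a Brownian motion $\fB$ of diffusion coefficient $2$ started at $v$. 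The hypothesis \eqref{xplim} combined with the identification \eqref{defofh} between particles and the height function gives that the rescaled curves $\ep^{1/2}(X_0^\ep(\cdot+1)+2\ep^{-1}\cdot)$ converge locally in $\LC$ to $-\fh_0^-$, so by continuity of the hitting-time functional on $\LC$ the rescaled hitting time $\ep\tau$ converges in distribution to $\ftau$, the hitting time of $\epi(-\fh_0^-)$ by $\fB$. Applying \eqref{eq:QRcvgce2} with parameter $\ft-\ep\tau$ in place of $\ft$ and $v$ replaced by $\ep^{1/2}B_\tau$, the integrand converges pointwise to $\fT_{-\ft,-\fx-\ftau}(\fB(\ftau),u)\1{\ftau<\infty}$; interchanging limit and expectation yields $\bar{\fT}^{\epi(-\fh_0^-)}_{-\ft,-\fx}(v,u)$.

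The hard part is justifying this interchange of limit and expectation. It requires a Gaussian-type bound on $|\ep^{-1/2}\SN_{-t,n-\tau}(B_\tau,z)|$ valid uniformly over the typical range of $\tau$ and $B_\tau$, obtained from a quantitative refinement of the steepest-descent analysis above in which the contour is chosen to encode decay as $\tau$ varies. Combined with tightness of the rescaled pair (walk, hitting time) and the continuous-mapping theorem for the hitting functional on $\LC$, this furnishes the dominated-convergence step needed to complete the argument.
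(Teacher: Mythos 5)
Your proposal follows essentially the same route as the paper: steepest descent around $w=\tfrac12$ (the paper parametrizes $w=\tfrac12(1-\ep^{1/2}\tilde w)$ instead of $w=\tfrac12+\ep^{1/2}\tilde w$, but this is only bookkeeping) for the contour-integral kernels, and a Donsker-plus-hitting-time argument for the epigraph version. The one genuine, if small, deviation: the paper avoids re-running steepest descent for \eqref{eq:QRcvgce2} by noting the algebraic identity $\SN_{-t,n}(z_1,z_2)=\SM_{-t,-n+1-z_1+z_2}(z_2,z_1)$, which reduces \eqref{eq:QRcvgce2} to \eqref{eq:QRcvgce} directly, whereas you redo the expansion with the sign of $\fx\tilde w^2$ flipped; both are correct, the paper's version is slightly more economical. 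Your sketch is also a bit more explicit than the paper about where the dominated-convergence step for \eqref{eq:QRcvgce3} requires real estimates; the paper glosses over this and defers quantitative control to the cited references.
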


\begin{proof}
Note that from  \eqref{def:sm},\eqref{def:sn}, $ \SN_{-t,n} (z_1,z_2)=  \SM_{-t,-n+1-z_1+z_2}(z_2,z_1)$, so \eqref{eq:QRcvgce2}
follows from  \eqref{eq:QRcvgce}.  
By changing variables $w\mapsto\frac12(1-\ep^{1/2}\tilde w)$ in \eqref{def:sm},  and using the scaling \eqref{eq:KPZscaling}, we have
\begin{align}
&\fT^\ep_{-\ft,\fx}(u)= \frac{1}{2\pi\I} \oint_{\tts C_\ep}d\tilde w\, e^{\ep^{-3/2}\ft F_\ep(\ep^{1/2} \tilde w,\ep^{1/2}\fx_\ep/\ft,\ep u_\ep/\ft ) } ,
\label{eq:ft1} \\\label{eq:ft2}
&F( w,x,u)= 
(  \arctanh w-w)-
 x\log(1- w^2) - u  \arctanh w
\end{align}
where $\fx_\ep= \fx+\ep^{1/2} (\fa-u)/2+\ep/2  $ and $u_\ep = u+\ep^{1/2} $  and $C_\ep $ is a circle of radius $\ep^{-1/2}$ centred at $\ep^{-1/2}$
and $\arctanh w = \tfrac12[\log(1+w)-\log(1-w)]$.  Note that 
\begin{equation}\label{eq:ft3}
\partial_w  F( w,x,u) =   ( w-w_+)(w-w_-)(1- w^2)^{-1},\qquad w_\pm = x \pm \sqrt{ x^2+ u},
\end{equation}
From \eqref{eq:ft3} it is easy to see that as $\ep\searrow 0$, $\ep^{-3/2}\ft F_\ep(\ep^{1/2} \tilde w,\ep^{1/2}\fx_\ep/\ft,\ep u_\ep/\ft )$  converges to the corresponding exponent in \eqref{eq:fTdef} (keeping in mind that $
\fT_{-\ft,\fx}=(\fT_{\ft,\fx})^*$).  Alternatively, one can just use \eqref{eq:ft2} and that for small $w$, $  \arctanh w-w\sim w^3/3$,
$-\log(1-w^2)\sim w^2$ and $\arctanh w\sim w$. Deform $C_\ep$ to the contour $\langle_{{}_\ep}~\cup~C^{\pi/3}_\ep$  where $\langle_{{}_\ep} $ is the part of the Airy contour $\langle$ within the ball of radius $\ep^{-1/2}$ centred at $\ep^{-1/2}$, and $C^{\pi/3}_\ep$ is the part of $C_\ep$ to the right of $\langle$.  
  As $\ep\searrow 0$, $\langle_{{}_\ep} \to \langle$, so it only remains to show that the integral over $C^{\pi/3}_\ep$ converges to $0$.
To see this note that the real part of the exponent of the integral over $\Gamma_0$ in \eqref{def:sm}  is given by $ \ep^{-3/2}\tfrac{\ft}2 [ \cos\theta -1 + (\tfrac18- c\ep^{1/2})\log( 1-4(\cos\theta -1)] $ where $w=\tfrac12 e^{i\theta}$and $c= \fx/\ft + \fa/2\ep^{1/2} +\ep$.  Using $\log(1+x) \le x$ for $x\ge 0$, this is less than or equal
to  $
\ep^{-3/2}\tfrac{\ft}8 [ \cos\theta -1] 
$ for sufficiently small $\ep$.  The  $\tilde{w}\in C^{\pi/3}_\ep$ corresponds to $\theta\ge \pi/3$, so the exponent there is less than $-\ep^{-3/2}\kappa\ft$ for some $\kappa>0$.  Hence this part of the integral vanishes.

Now define the scaled walk $\fB^\ep(\fx) = \ep^{1/2}\left(B_{\ep^{-1}\fx} + 2\ep^{-1}\fx-1\right)$ for $\fx\in \ep\Z_{\geq0}$, interpolated linearly in between, and let $\ftau^\ep$ be the hitting time by $\fB^\ep$ of $\epi(-\fh^{\ep}(0,\cdot)^-)$.
By Donsker's invariance principle~\cite{billingsley}, $\fB^\ep$ converges locally uniformly in distribution to a Brownian motion $\fB(\fx)$ with diffusion coefficient $2$, and therefore (using convergence of the initial values of TASEP) the hitting time $\ftau^\ep$ converges to $\ftau$ as well.
\end{proof}

We will compute next the limit of \eqref{eq:TASEPtofp} using \eqref{eq:extKernelProb} under the scaling \eqref{eq:KPZscaling}.
To this end we change variables in the kernel as in Lemma~\ref{lem:KernelLimit1}, so that for $z_i=2\ep^{-1}\fx_i+\ep^{-1/2}(u_i+\fa_i)-2$ we need to compute the limit of $\ep^{-1/2}\big(\bar\chi_{2\ep^{-1}\fx-2}K_t\bar\chi_{2\ep^{-1}\fx-2}\big)(z_i,z_j)$.
Note that the change of variables turns $\bar\chi_{2\ep^{-1}\fx-2}(z)$ into $\bar\chi_{-\fa}(u)$.
We have $n_i<n_j$ for small $\ep$ if and only if $\fx_j<\fx_i$ and in this case we have, under our scaling,
\[\ep^{-1/2}Q^{n_j-n_i}(z_i,z_j)\longrightarrow e^{(\fx_i-\fx_j)\p^2}(u_i,u_j),\]
as $\ep\to 0$. For the second term in \eqref{eq:Kt-2} we have
\begin{multline}
\ep^{-1/2}(\SM_{t,-{n}_i})^*\SN_{t,n_j}(z_i,z_j)=\ep^{-1}\int_{-\infty}^{\infty}dv\,(\fT_{-\ft,\fx_i}^\ep)^*(u_i,\ep^{-1/2}v)\bar\fT^{\ep,\epi(-\fh_0^-)}_{-\ft,-\fx_j}(\ep^{-1/2}v,u_j)\\
\xrightarrow[\ep\to0]{}(\fT_{-\ft,\fx_i})^*\fT_{-\ft,-\fx_j}(u_i,u_j)
\end{multline}
(modulo suitable decay of the integrand).
Thus we obtain a limiting kernel
\begin{equation}\label{def:firstker}
\fK_{\lim}(\fx_i,u_i;\fx_j,u_j)=-e^{(\fx_i-\fx_j)\p^2}(u_i,u_j)\1{\fx_i>\fx_j}+(\fT_{-\ft,\fx_i})^*\bar \fT^{\epi(-\fh_0^-)}_{-\ft,-\fx_j}(u_i,u_j),
\end{equation}
surrounded by projections $\bar\chi_{-\fa}$.
Our computations here only give pointwise convergence of the kernels, but they can be upgraded to trace class convergence (see \cite{KPZ}), which thus yields convergence of the Fredholm determinants.

We prefer the projections $\bar\chi_{-\fa}$ which surround \eqref{def:firstker} to read $\chi_{\fa}$, so we change variables $u_i\longmapsto-u_i$ and replace the Fredholm determinant of the kernel by that of its adjoint to get 
\[\det\left(\fI-\chi_{a}\fK^{\hypo(\fh_0)}_{\ft,\uptext{ext}}\chi_{a}\right)\qquad\text{with}\qquad
\fK^{\hypo(\fh_0)}_{\ft,\uptext{ext}}(u_i,u_j)=\fK_{\lim}(\fx_j,-u_j;\fx_i,-u_i).\]
The choice of superscript $\hypo(\fh_0)$ in the resulting kernel comes from the fact 
\[
\bar \fT^{\epi(-\fh_0^-)}_{-\ft,\fx}(v,-u)= \bigl(\bar\fT^{\hypo(\fh_0^-)}_{\ft,\fx}\bigr)^*(u,-v), 
\]
which together with $\fT_{-\ft,\fx}(-u,v)=(\fT_{\ft,\fx})^*(-v,u)$ yield 
\begin{equation}\label{eq:Kexthalf}
\fK^{\hypo(\fh_0)}_{\ft,\uptext{ext}}=-e^{(\fx_j-\fx_i)\p^2}\1{\fx_i<\fx_j}+(\bar \fT^{\hypo(\fh_0^-)}_{\ft,-\fx_i})^*\fT_{\ft,\fx_j}.
\end{equation}
This gives the following one-sided fixed point formula for the limit $\fh$:  

\begin{theorem}[One-sided fixed point formula]\label{thm:Kfixedpthalf}
Let $\fh_0\in \UC$ with  $\fh_0(\fx) = -\infty$ for $\fx>0$.
	Then given $\fx_1<\fx_2<\dotsm<\fx_M$ and $\fa_1,\ldots,\fa_M\in\R$, we have
	\begin{align}
		&\pp_{\fh_0} \bigl(\fh(\ft,\fx_1)\leq \fa_1,\ldots,\fh(\ft,\fx_M)\leq \fa_M\bigr) =\det \left(\fI-\chi_{\fa} \fK^{\hypo(\fh_0)}_{\ft,\uptext{ext}}\chi_{\fa}\right)_{L^2(\{\fx_1,\ldots,\fx_M\}\times\R)}\\
    		&\hspace{0.275in}=\det \left(\fI-\fK^{\hypo(\fh_0)}_{\ft,\fx_M}+\fK^{\hypo(\fh_0)}_{\ft,\fx_M}e^{(\fx_1-\fx_M)\p^2}\bar \P_{\fa_1}e^{(\fx_2-\fx_1)\p^2}\bar \P_{\fa_2}\dotsm e^{(\fx_M-\fx_{M-1})\p^2}\bar \P_{\fa_M}\right)_{L^2(\R)},\label{eq:onesidepath}
	\end{align}
	with the kernel
	\begin{equation}
	    \fK^{\hypo(\fh_0)}_{\ft,\fx}(\cdot,\cdot) = \fK^{\hypo(\fh_0)}_{\ft,\uptext{ext}}(\fx,\cdot;\fx,\cdot),
	\end{equation}
	where the latter is defined in \eqref{eq:Kexthalf}.
\end{theorem}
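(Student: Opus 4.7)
The first (extended-kernel) identity is essentially obtained by the computation carried out in the main text above the theorem. The plan is to substitute the KPZ scaling \eqref{eq:KPZscaling} into Theorem~\ref{thm:tasepformulas} applied to the probability \eqref{eq:TASEPtofp}, use the pointwise convergences from Lemma~\ref{lem:KernelLimit1} (after changing variables $z_i = 2\ep^{-1}\fx_i + \ep^{-1/2}(u_i+\fa_i) - 2$), and upgrade them to trace-class convergence so that the Fredholm determinants converge. This produces the limit kernel $\fK_{\lim}$ of \eqref{def:firstker}; a further change of variables $u\mapsto -u$ combined with replacing the kernel by its adjoint (which preserves the Fredholm determinant) then recasts it as $\fK^{\hypo(\fh_0)}_{\ft,\uptext{ext}}$ in \eqref{eq:Kexthalf}, using the identities $\bar\fT^{\epi(-\fh_0^-)}_{-\ft,\fx}(v,-u) = (\bar\fT^{\hypo(\fh_0^-)}_{\ft,\fx})^*(u,-v)$ and $\fT_{-\ft,\fx}(-u,v)=(\fT_{\ft,\fx})^*(-v,u)$. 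The one-sided hypothesis $\fh_0(\fx)=-\infty$ for $\fx>0$ is what allows us to choose right-finite approximating TASEP data $X_0^\ep$, so that Theorem~\ref{thm:tasepformulas} applies with no modification.

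For the path integral identity, the plan is to apply Theorem~\ref{thm:alt-extendedToBVP} with the identifications $t_i\leftrightarrow \fx_i$, $\mathcal{W}_{\fx_i,\fx_j}=e^{(\fx_j-\fx_i)\partial^2}$, and $K_{\fx}=\fK^{\hypo(\fh_0)}_{\ft,\fx}$. The semigroup property is immediate for the heat flow and right-invertibility $\mathcal{W}_{\fx_i,\fx_j}\mathcal{W}_{\fx_j,\fx_i}=\fI$ is trivial. Reversibility $\mathcal{W}_{\fx_i,\fx_j}K_{\fx_j}\mathcal{W}_{\fx_j,\fx_i}=K_{\fx_i}$ follows directly from the conjugation identity $\fK^{\hypo(\fh_0)}_{\ft,\fx}=e^{-\fx\partial^2}\fK^{\hypo(\fh_0)}_{\ft}e^{\fx\partial^2}$, which also yields the extended-kernel consistency $\mathcal{W}_{\fx_i,\fx_j}K_{\fx_j}= \fK^{\hypo(\fh_0)}_{\ft,\uptext{ext}}(\fx_i,\cdot;\fx_j,\cdot)$ for $\fx_i\le \fx_j$, matching \eqref{eq:Khypo-ext}. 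The boundedness and trace-class conditions in Assumptions~\ref{assum:1} and~\ref{assum:3} are verified by splitting $\fK^{\hypo(\fh_0)}_{\ft}$ into the four pieces appearing in \eqref{eq:Kep-expansion} and bounding each one separately with a suitable Gaussian weight; once these are in place, Theorem~\ref{thm:alt-extendedToBVP} transforms the extended-kernel determinant into the single-$L^2(\R)$ determinant displayed in \eqref{eq:onesidepath}.

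The main obstacle is the trace-class upgrade of the pointwise limit in Lemma~\ref{lem:KernelLimit1}. The integrand bounds used in the contour-shift argument sketched there give exponential decay of the kernel outside a compact region, but to obtain convergence in trace norm one must conjugate both $\bar\chi_a K_t \bar\chi_a$ and its limit by a weight of the form $e^{c u}$ (or a double exponential matching the Airy decay), and then prove uniform-in-$\ep$ bounds on each conjugated block. The most delicate case is the block involving the hitting operator $\bar\fT^{\epi(-\fh_0^-)}_{-\ft,-\fx}$: one must invoke the linear growth bound $\fh_0(\fx)\le C(1+|\fx|)$ on the initial data to obtain adequate tails for the Brownian hitting time $\ftau$, and transport these tails through the convergence $\ftau^\ep\Rightarrow\ftau$ coming from Donsker's theorem. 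The full details of these uniform estimates are carried out in~\cite{KPZ}; once they are granted, the two equalities in the theorem follow from the two steps above with no further difficulty.
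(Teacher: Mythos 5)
Your proposal follows the same two-stage plan as the paper: (1) establish the extended-kernel identity by substituting the KPZ scaling \eqref{eq:KPZscaling} into the TASEP formula of Theorem~\ref{thm:tasepformulas}, invoking the pointwise convergences of Lemma~\ref{lem:KernelLimit1}, upgrading to trace-class convergence, and then performing the $u\mapsto -u$ change of variables plus passage to the adjoint to land on the kernel \eqref{eq:Kexthalf}; (2) deduce the path-integral form by applying Theorem~\ref{thm:alt-extendedToBVP} to the continuum kernels with $\mathcal{W}_{\fx_i,\fx_j}=e^{(\fx_j-\fx_i)\partial^2}$ and $K_{\fx}=\fK^{\hypo(\fh_0)}_{\ft,\fx}$, using the conjugation identity $\fK^{\hypo(\fh_0)}_{\ft,\fx}=e^{-\fx\partial^2}\fK^{\hypo(\fh_0)}_{\ft}e^{\fx\partial^2}$ to verify the reversibility relation. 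This is precisely what the paper does, where the second part is contained in the sentence ``can be obtained similarly to \eqref{eq:path-int-kernel-TASEPgem} for the discrete kernels.'' Your identification of the trace-class upgrade as the substantive gap and the role of the linear growth bound $\fh_0(\fx)\le C(1+|\fx|)$ in controlling the hitting operator is on point.

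One small slip: your extended-kernel consistency relation $\mathcal{W}_{\fx_i,\fx_j}K_{\fx_j}=\fK^{\hypo(\fh_0)}_{\ft,\mathrm{ext}}(\fx_i,\cdot;\fx_j,\cdot)$ is claimed for $\fx_i\le\fx_j$, but it actually holds in the opposite regime $\fx_i\ge\fx_j$ (where $\mathcal{W}_{\fx_i,\fx_j}$ is the formally inverted backward heat flow). For $\fx_i<\fx_j$ the extended kernel \eqref{eq:Khypo-ext} carries the extra term $-e^{(\fx_j-\fx_i)\partial^2}$, i.e., one has $K^{\mathrm{ext}}=-\mathcal{W}(I-K)$ rather than $\mathcal{W}K$, matching \eqref{eq:generalExt}. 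This is a direction error in the write-up rather than a conceptual flaw; with the inequality reversed the verification of the hypotheses of Theorem~\ref{thm:alt-extendedToBVP} goes through as you describe.
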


\noindent The second identity in \eqref{eq:onesidepath} can be obtained similarly to \eqref{eq:path-int-kernel-TASEPgem} for the discrete kernels.

Our next goal is to take a continuum limit in the $\fa_i$'s of the path-integral formula \eqref{eq:onesidepath} on an interval $[-L,L]$ and then take $L\to\infty$. For this we take $\fx_1,\dotsc,\fx_M$ to be a partition of $[-L,L]$ and take $\fa_i=\fg(\fx_i)$. Then taking the limit $M \to \infty$ we get as in \cite{flat} (and actually dating back to \cite{cqr})
\begin{equation}
\fT_{\ft/2,-L}\bar\P_{\fg(\fx_1)}e^{(\fx_2-\fx_1)\p^2}\bar\P_{\fg(\fx_2)}\dotsm e^{(\fx_M-\fx_{M-1})\p^2}\bar\P_{\fg(\fx_M)}(\fT_{\ft/2,-L})^*
\longrightarrow\fT_{\ft/2,-L}\wt\Theta^{\fg}_{-L,L}(\fT_{\ft/2,-L})^*,\label{eq:continuumLimit}
\end{equation}
where 
\[
\wt\Theta^g_{\ell_1,\ell_2}(u_1,u_2)=\pp_{\fB(\ell_1)=u_1}\big(\fB(s)\leq\fg(s)~\forall\,s\in[\ell_1,\ell_2],\,\fB(\ell_2)\in du_2\big)/du_2.
\]
When we pass now to the limit $L \to \infty$, one can see (at least roughly) that we obtain 
\[
\fT_{\ft/2,-L}\wt\Theta^{\fg}_{-L,L}(\fT_{\ft/2,-L})^*\longrightarrow \fI-\fK^{\epi(\fg)}_{-\ft/2}.
\]
One can find a rigorous proof of these results in \cite{KPZ}. After taking these limits, the Fredholm determinant from \eqref{eq:onesidepath} thus converges to
\begin{equation}
\det\left(\fI-\fK^{\hypo(\fh_0)}_{\ft/2}+\fK^{\hypo(\fh_0)}_{\ft/2} \bigl(\fI-\fK^{\epi(\fg)}_{-\ft/2}\bigr)\right)
=\det\left(\fI-\fK^{\hypo(\fh_0)}_{\ft/2}\fK^{\epi(\fg)}_{-\ft/2}\right),
\end{equation}
which is exactly the content of Theorem~\ref{thm:fullfixedpt}. 

As in the TASEP case, the kernel in \eqref{eq:Kexthalf} can be rewritten (thanks to the analog of \eqref{eq:epiabove}) as
\begin{equation}\label{eq:Kexthalf-alt}
    \fK^{\hypo(\fh_0)}_{\ft,\uptext{ext}}(\fx_i,\cdot;\fx_j,\cdot)=-e^{(\fx_j-\fx_i)\partial^2}\1{\fx_i<\fx_j}
    +(\fT_{\ft,-\fx_i})^*\bar \P_{\fh_0(0)}\fT_{\ft,\fx_j}+(\bar{\fT}^{\hypo(\fh_0^-)}_{\ft,-\fx_i})^* \P_{\fh_0(0)}\fT_{\ft,\fx_j}.
\end{equation}

\subsection{From one-sided to two-sided formulas.}

Now we derive the formula for the KPZ fixed point with general initial data $\fh_0$  
as the $L\to\infty$ limit of the formula with initial data \[\fh_0^L(\fx)=\fh_0(\fx)\1{\fx\leq L}-\infty\cdot\1{\fx>L},\] which can be obtained from the previous theorem by translation invariance. 
We then take, in the next subsection, a continuum limit of the operator $e^{(\fx_1-\fx_M)\p^2}\bar \P_{a_1}e^{(\fx_2-\fx_1)\p^2}\bar \P_{\fa_2}\dotsm e^{(\fx_M-\fx_{M-1})\p^2}\bar \P_{\fa_M}$ on the right side of \eqref{eq:onesidepath} to obtain a ``hit'' operator for the final data as well.  The result of all this is the same as if we started with two-sided data for TASEP.  

The shift invariance of TASEP, tells us that $\fh(\ft, \fx; \fh_0^L)\stackrel{\text{dist}}{=} \fh(\ft,\fx-L;\theta_L\fh_0^L)$, where $\theta_L$ is the shift operator.
Our goal then is to take $L\to\infty$ in the formula given in Theorem~\ref{thm:Kfixedpthalf} for $\fh(\ft,\fx-L;\theta_L\fh_0^L)$.
We get
\[\pp_{\theta_L\fh_0}\!\left(\fh(\ft,\fx_1-L)\leq \fa_1,\ldots,\fh(\ft,\fx_M-L)\leq \fa_M\right)
=\det\left(\fI-\chi_{a}\wt\fK^{\theta_L\fh_0^L}_L\chi_{a}\right)_{L^2(\{\fx_1,\ldots,\fx_M\}\times\R)}\]
with
\begin{equation}\label{eq:onesidedforlimit}
\wt\fK^{\theta_L\fh_0^L}_L(\fx_i,\cdot ;\fx_j,\cdot)=e^{(\fx_j-\fx_i)\p^2}\1{\fx_i<\fx_j}+e^{(\fx_j-\fx_i)\p^2}\bigl(\bar{\fT}^{\hypo((\theta_L\fh_0^L)^-_0)}_{\ft,-\fx_j+L}\bigr)^*\fT_{\ft,\fx_j-L}.
\end{equation}
Since $(\theta_{L}\fh_0^L)_0^+\equiv-\infty$, we may rewrite $\bigl(\bar{\fT}^{\hypo((\theta_L\fh_0^L)^-_0)}_{\ft,-\fx_j+L}\bigr)^*\fT_{\ft,\fx_j-L}$ as\footnote{At first glance it may look as if the product $e^{-\fx\p^2}\fK^{\hypo(\fh)}_\ft e^{\fx\p^2}$ makes no sense, because $\fK^{\hypo(\fh)}_{\ft}$ is given in \eqref{eq:Kepihypo} as the identity minus a certain kernel, and applying $e^{\fx\p^2}$ to $\fI$ is ill-defined for $\fx<0$.
However, thanks to the analog of \eqref{eq:Kep-expansion} below for $\fK^{\hypo(\fh)}_\ft$, the action of $e^{\fx\p^2}$ on this kernel on the left and right is well defined for any $\fx\in\R$.
This also justifies the identity in \eqref{eq:heatkerout}.}
\[
\fI- \bigl(\fT_{\ft,-\fx_j+L}-\bar{\fT}^{\hypo((\theta_L\fh_0^L)^-_0)}_{\ft,-\fx_j+L}\bigr)^*\bigl(\fT_{\ft,\fx_j-L}-\bar{\fT}^{\hypo((\theta_L\fh_0^L)^+_0)}_{\ft,\fx_j-L}\bigr)
=e^{-\fx_j\p^2}\fK^{\hypo(\fh_0^L)}_\ft e^{\fx_j\p^2},\label{eq:heatkerout}
\]
where $\fK^{\hypo(\fh_0)}_\ft$ is the kernel defined in \eqref{eq:Kepihypo}.
Note the crucial fact that the right hand side depends on $L$ only through $\fh_0^L$ (the various shifts by $L$ on the left hand side of \eqref{eq:heatkerout} play no role).
It was shown in~\cite{flat} that $\fK^{\hypo(\fh_0^L)}_\ft\longrightarrow\fK^{\hypo(\fh_0)}_\ft$ as $L\to\infty$.
This tells us that the second term on the right hand side of \eqref{eq:onesidedforlimit} equals $e^{-\fx_i\p^2}\fK^{\hypo(\fh_0^L)}_\ft e^{\fx_j\p^2}$, which converges to $e^{-\fx_i\p^2}\fK^{\hypo(\fh_0)}_\ft e^{\fx_j\p^2}$ as $L\to\infty$, and leads to

\begin{theorem}[Two-sided fixed point formula]\label{thm:two-sided-lim-extended}
Let $\fh_0\in \UC$  and $\fx_1<\fx_2<\dotsm<\fx_M$.
Then for $\fh(\ft,\fx)$ given as in \eqref{eq:height-cvgce},
\begin{align}
&\pp_{\fh_0}\bigl(\fh(\ft,\fx_1)\leq\fa_1,\ldots,\fh(\ft,\fx_M)\leq\fa_M\bigr) =\det\left(\fI-\chi_{\fa}\fK^{\hypo(\fh_0)}_{\ft,\uptext{ext}}\chi_{\fa}\right)_{L^2(\{\fx_1,\ldots,\fx_M\}\times\R)}\label{eq:twosided-ext}\\
&\hspace{0.3in}=\det\left(\fI-\fK^{\hypo(\fh_0)}_{\ft,\fx_M}+\fK^{\hypo(\fh_0)}_{\ft,\fx_M} e^{(\fx_1-\fx_M)\p^2}\bar\P_{\fa_1}e^{(\fx_2-\fx_1)\p^2}\bar\P_{\fa_2}\dotsm e^{(\fx_M-\fx_{M-1})\p^2}\bar\P_{\fa_M}\right)_{L^2(\R)}\label{eq:twosided-path}
\end{align}
where the kernels are as in Theorem~\ref{thm:Kfixedpthalf}.
\end{theorem}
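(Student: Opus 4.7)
The plan is to obtain the two-sided formula as an $L\to\infty$ limit of the one-sided formula from Theorem~\ref{thm:Kfixedpthalf}. I will truncate $\fh_0$ to be right-finite, use shift invariance to move into the setup of that theorem, show that the $L$-dependence of the resulting kernel enters only through the truncated initial data, and then send $L\to\infty$. Concretely, set $\fh_0^L(\fx)=\fh_0(\fx)\1{\fx\le L}-\infty\cdot\1{\fx>L}$, so $\fh_0^L\to\fh_0$ locally in $\UC$, and convergence of the associated fixed points in finite-dimensional distributions follows either from the variational formula (Theorem~\ref{thm:airyvar}) or from the uniform TASEP-level estimate $Ce^{-cL^3}$ mentioned after \eqref{eq:KPZscaling}. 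By shift invariance (Proposition~\ref{symmetries}(iii)), $\bigl(\fh(\ft,\fx_i;\fh_0^L)\bigr)_i\stackrel{\mathrm{dist}}{=}\bigl(\fh(\ft,\fx_i-L;\theta_L\fh_0^L)\bigr)_i$, and since $\theta_L\fh_0^L$ vanishes identically on $(0,\infty)$, Theorem~\ref{thm:Kfixedpthalf} provides a Fredholm representation whose extended kernel, by \eqref{eq:Kexthalf}, is
\[
\wt\fK^{\theta_L\fh_0^L}_L(\fx_i,\cdot;\fx_j,\cdot)=-e^{(\fx_j-\fx_i)\p^2}\1{\fx_i<\fx_j}+\bigl(\bar\fT^{\hypo((\theta_L\fh_0^L)^-_0)}_{\ft,-\fx_j+L}\bigr)^{\!*}\fT_{\ft,\fx_j-L}.
\]

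The crucial step is to show that the $L$-shifts cancel. Since $(\theta_L\fh_0^L)^+_0\equiv-\infty$ we have $\bar\fT^{\hypo(-\infty)}_{\ft,\fx_j-L}\equiv0$, so using the identity $(\fT_{\ft,\fx})^*\fT_{\ft,-\fx}=\fI$ (exercise after Definition~\ref{def:groupS}) a direct rearrangement yields
\[
\bigl(\bar\fT^{\hypo((\theta_L\fh_0^L)^-_0)}_{\ft,-\fx_j+L}\bigr)^{\!*}\fT_{\ft,\fx_j-L}=\fI-\bigl(\fT_{\ft,-\fx_j+L}-\bar\fT^{\hypo((\theta_L\fh_0^L)^-_0)}_{\ft,-\fx_j+L}\bigr)^{\!*}\bigl(\fT_{\ft,\fx_j-L}-\bar\fT^{\hypo((\theta_L\fh_0^L)^+_0)}_{\ft,\fx_j-L}\bigr).
\]
Comparing with the expansion \eqref{eq:Kep-expansion} of $\fK^{\hypo(\fh_0^L)}_\ft$, absorbing the translations by $\pm L$ inside the hit operators via the semigroup property \eqref{eq:groupS2}, and using the $\fx$-independence of $\fK^{\hypo}_\ft$ (exercise after Definition~\ref{def:epi}), the right-hand side equals $e^{-\fx_i\p^2}\fK^{\hypo(\fh_0^L)}_\ft e^{\fx_j\p^2}$. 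This removes $L$ from the kernel except through the argument of $\fK^{\hypo}_\ft$, matching exactly the form \eqref{eq:Khypo-ext}.

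It remains to take $L\to\infty$: it was shown in \cite{flat} that $\fK^{\hypo(\fh_0^L)}_\ft\to\fK^{\hypo(\fh_0)}_\ft$, and after conjugation by a suitable multiplier $e^{c\fx}$ (see~\cite{KPZ}) the convergence is trace class, so the Fredholm determinants converge. Combined with Step~1 this gives the extended-kernel identity \eqref{eq:twosided-ext}. Finally, the path-integral form \eqref{eq:twosided-path} is obtained from \eqref{eq:twosided-ext} by an application of Theorem~\ref{thm:alt-extendedToBVP} with $K_{t_i}=\fK^{\hypo(\fh_0)}_\ft$, $\mathcal{W}_{t_i,t_j}=e^{(\fx_j-\fx_i)\p^2}$, and $\Ml_{t_i}$ equal to multiplication by $\chi_{\fa_i}$; the required assumptions (right-invertibility, semigroup, reversibility) follow routinely from \eqref{eq:groupS2} and the expansion \eqref{eq:Kep-expansion}, exactly as in the TASEP case worked out in Section~\ref{app:proofPathIntTASEP}.

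The principal obstacle is the trace-class control in the last step, rather than the algebraic manipulations of Step~2 (which, once unpacked, are bookkeeping made delicate only by the ill-defined nature of symbols such as $e^{\fx\p^2}\fI$ for $\fx<0$ — these appear only through the $\fI$ summand in $\fK^{\hypo}_\ft=\fI-(\cdots)$ and are handled by expanding via \eqref{eq:Kep-expansion} before applying the semigroup). Uniform trace-norm bounds on $\fK^{\hypo(\fh_0^L)}_\ft$ require the Gaussian-type decay of $\fT_{\ft,\fx}$, the linear growth bound $\fh_0(\fx)\le C(1+|\fx|)$ from the $\UC$ assumption, and a careful conjugation to restore absolute convergence of the Fredholm series, which is precisely the analysis carried out in~\cite{KPZ} and which I would invoke rather than reproduce, as indicated in the remark following \eqref{eq:fpform}.
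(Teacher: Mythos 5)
Your proposal is correct and follows essentially the same route as the paper: truncate to $\fh_0^L$, shift by $L$ to land in the one-sided setting, use $(\theta_L\fh_0^L)_0^+\equiv-\infty$ together with $(\fT_{\ft,\fx})^*\fT_{\ft,-\fx}=\fI$ to rewrite the hit-operator product as $\fI$ minus the ``no-hit'' product, recognize this as $e^{-\fx_i\p^2}\fK^{\hypo(\fh_0^L)}_\ft e^{\fx_j\p^2}$, and then pass to the limit citing $\fK^{\hypo(\fh_0^L)}_\ft\to\fK^{\hypo(\fh_0)}_\ft$. The only blemish is a small index slip in your intermediate display for $\wt\fK^{\theta_L\fh_0^L}_L$ (the hit operator should carry $-\fx_i+L$, or else an extra $e^{(\fx_j-\fx_i)\p^2}$ prefactor as in the paper), but your stated conclusion $e^{-\fx_i\p^2}\fK^{\hypo(\fh_0^L)}_\ft e^{\fx_j\p^2}$ is the right one and the argument is otherwise the paper's.
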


\subsection{Continuum limit.}\label{sec:continuumfixedpt}

We turn now to the continuum limit in the $\fa_i$'s of the path-integral formula \eqref{eq:twosided-path} on an interval $[-R,R]$ (we will take $R\to\infty$ later on).
To this end we conjugate the kernel inside the determinant by $\fT_{\ft/2,\fx_M}$, leading to
\[\wt\fK^{\fh_0}_{\ft,\fx_M}-\wt\fK^{\fh_0}_{\ft,\fx_M}\big[\fT_{\ft/2,\fx_1}\bar\P_{\fa_1}e^{(\fx_2-\fx_1)\p^2}\bar\P_{\fa_2}\dotsm e^{(\fx_M-\fx_{M-1})\p^2}\bar\P_{\fa_M}(\fT_{\ft/2,-\fx_M})^*\big]\]
with $\wt\fK^{\fh_0}_{\ft,\fx_M}=\fT_{\ft/2,\fx_M}\fK^{\hypo(\fh_0)}_{\ft,\fx_M}(\fT_{\ft/2,-\fx_M})^*=\fK^{\hypo(\fh_0)}_{\ft/2}$ (the second equality follows from \eqref{eq:Kepihypo}).
Now we take the limit of term in brackets, letting $\fx_1,\ldots,\fx_M$ be a partition of $[-R,R]$ and taking $M\to\infty$ with $\fa_i=\fg(\fx_i)$.
As in~\cite{cqr} we have
\[
\fT_{\ft/2,-R}\,\bar\P_{\fg(\fx_1)}e^{(\fx_2-\fx_1)\p^2}\bar\P_{\fg(\fx_2)}\dotsm e^{(\fx_M-\fx_{M-1})\p^2}\bar\P_{\fg(\fx_M)}(\fT_{\ft/2,-R})^*
\longrightarrow\fT_{\ft/2,-R}\,\wt\Theta^{\fg}_{-R,R}(\fT_{\ft/2,-R})^*,\label{eq:continuumLimit}
\]
where $\wt\Theta^g_{\ell_1,\ell_2}(u_1,u_2)=\pp_{\fB(\ell_1)=u_1}\big(\fB(s)\leq\fg(s)~\forall\,s\in[\ell_1,\ell_2],\,\fB(\ell_2)\in du_2\big)/du_2$,

Next we rewrite the probability as
\begin{multline*}
  \pp_{\ell_1, x_1;\ell_2, x_2}\!\left(B(t)\leq g(t)\text{ on }[\ell_1,\ell_2]\right)
  = \int_{-\infty}^{g(\alpha)}\pp_{\ell_1, x_1; \ell_2, x_2} ( B(\alpha)\in dy)\\
  \times \pp_{\ell_1, x_1; \alpha, y} (B(t)<g(t)\text{ on }[\ell_1,\alpha])\pp_{\alpha, y;\ell_2, x_2}(
  B(t)<g(t)\text{ on } [\alpha,\ell_2]).
\end{multline*}
The last probability in the above integral can be rewritten as
\begin{equation*}
  \pp_{0, y;\ell_2-\alpha, x_2}(B(t)<g_\alpha(t)\text{ on } [0,\ell_2-\alpha])
  =1-\int_0^{\ell_2-\alpha}\pp_y(\tau_{g_\alpha^+}\in dt)\tfrac{p(\ell_2-\alpha-t,x_2-g_\alpha(t))}{p(\ell_2-\alpha,x_2-y)}.
\end{equation*}
A similar identity can be written for $\pp_{\ell_1, x_1; \alpha, y}(B(t)<g(t)\text{ on }[\ell_1,\alpha])$, now using $\tau_{g_a^-}$, which we take to be independent of $\tau_{g_\alpha^+}$, and going backwards from time $\alpha$ to time
$\ell_1$.
Using this and writing $\pp_{\ell_1, x_1; \ell_2, x_2}(B(\alpha)\in dy)$ explicitly we find that
\begin{align*}
  \pp_{\ell_1, x_1;\ell_2, x_2}\!\left(B(t)<g(t)\text{ on }[\ell_1,\ell_2]\right)
  &=\int_{-\infty}^{g(\alpha)}dy\sqrt{\tfrac{\ell_2-\ell_1}{4\pi(a-\ell_1)(\ell_2-\alpha)}}
  e^{-\frac{((\ell_2-\alpha)x_1+(\alpha-\ell_1)x_2+(\ell_1-\ell_2)y)^2}{4(\alpha-\ell_1)(\ell_2-\alpha)(\ell_2-\ell_1)}}\\
  &\hspace{0.35in}\times\left(1-\int_0^{\alpha-\ell_1}\pp_y(\tau_{g_\alpha^-}\in dt_1)\tfrac{p(
      \alpha-\ell_1 - t_1, x_1-g_\alpha(-t_1))}{ p(\alpha-\ell_1 , x_1-y) }\right)\\
  &\hspace{0.35in}\times\left(1-\int_{0}^{\ell_2-\alpha}\pp_y(\tau_{g_\alpha^+}\in dt_2)\tfrac{
      p(\ell_2-\alpha - t_2, x_2-g_\alpha(t_2))}{p(\ell_2-\alpha , x_2-y) } \right)
\end{align*}
Recalling  that in the formula for
$\wt\Theta^g_{\ell_1,\ell_2}(x_2-x_1)$ this probability is premultiplied by
$p(\ell_2-\ell_1,x_2-x_1+\ell_2^2-\ell_1^2)$ and observing that
\[\tfrac{p(\ell_2-\ell_1,x_2-x_1+\ell_2^2-\ell_1^2)}{p(a-\ell_1,x_1-y)p(\ell_2-a,x_2-y)}\sqrt{\tfrac{\ell_2-\ell_1}{4\pi(a-\ell_1)(\ell_2-a)}}
e^{-\frac{((\ell_2-a)x_1+(a-\ell_1)x_2+(\ell_1-\ell_2)y)^2}{4(a-\ell_1)(\ell_2-a)(\ell_2-\ell_1)}}
=e^{\frac14(\ell_1^2-\ell_2^2+2x_1-2x_2)(\ell_1+\ell_2)}\] we deduce that
\begin{multline*}
  \wt\Theta^g_{\ell_1,\ell_2}(x_1,x_2)=e^{\frac14(\ell_1^2-\ell_2^2+2x_1-2x_2)(\ell_1+\ell_2)}\\
  \times\int_{-\infty}^{g(\alpha)}dy\left[p(\alpha-\ell_1 , x_1-y)-\int_0^{\alpha-\ell_1}dt_1\,\pp_{y}(\tau_{g_\alpha^-}\in dt_1)p(\alpha-\ell_1 - t_1, x_1-g_\alpha(-t_1))\right]\\
  \times\left[p(\ell_2-\alpha , x_2-y)-\int_{0}^{\ell_2-\alpha}dt_2\,\pp_{y}(\tau_{g_\alpha^+}\in
    dt_2)p(\ell_2 -\alpha- t_2, x_2-g_\alpha(t_2))\right].
\end{multline*}
Taking $-\ell_1=\ell_2=L$, we have that 
  for any $\alpha\in(-L,L)$ we have
  \begin{multline*}
    A^*e^{-L\Delta}\wt\Theta^g_{-L,L}e^{-L\Delta}A(\lambda_1,\lambda_2)\\
    =\int_{-\infty}^{g(\alpha)}dy\left[A^*e^{\alpha \Delta}(\lambda_1,y)-\int_{0}^{\alpha+L}dt_1\,\pp_y(\tau_{g_\alpha^-}\in
      dt_1)A^*e^{(\alpha-t_1)\Delta}(\lambda_1,g_\alpha(-t_1))\right]\\\times
    \left[e^{-\alpha \Delta}A(y,\lambda_2)-\int_{0}^{L-\alpha}dt_2\,\pp_y(\tau_{g_\alpha^+}\in
      dt_2)e^{-(\alpha+t_2)\Delta}A(g_\alpha(t_2),\lambda_2)\right].
  \end{multline*}
where  the \emph{Airy transform} $A$, is defined by
\begin{equation}
  \label{eq:B0}
  A(x,\lambda)=\Ai(x-\lambda).
\end{equation}
Now we have  $\fT_{\ft/2,-R}\wt\Theta^{\fg}_{-R,R}(\fT_{\ft/2,-R})^*\longrightarrow \fI-\fK^{\epi(\fg)}_{-\ft/2}$ as $R\to\infty$.
Our Fredholm determinant is thus now given by
\[
\det\left(\fI-\fK^{\hypo(\fh_0)}_{\ft/2}+\fK^{\hypo(\fh_0)}_{\ft/2}(\fI-\fK^{\epi(\fg)}_{-\ft/2})\right)
=\det\left(\fI-\fK^{\hypo(\fh_0)}_{\ft/2}\fK^{\epi(\fg)}_{-\ft/2}\right).\label{eq:preThmfixedpt}
\]
which is the KPZ fixed point formula.

\begin{exercise} The Airy transform satisfies $AA^*=I$, so that $f(x)=\int_{-\infty}^\infty d\lambda\Ai(x-\lambda)Af(\lambda)$.
In other words, the shifted Airy functions $\{\Ai(x-\lambda)\}_{\lambda\in\R}$ (which are not in $L^2(\R)$) form a generalized orthonormal basis of $L^2(\R)$. Thus the \emph{Airy kernel}  $\K(x,y)=\int_{-\infty}^0 d\lambda\Ai(x-\lambda)\!\Ai(y-\lambda)$ is the projection onto the subspace spanned by $\{\Ai(x-\lambda)\}_{\lambda\leq0}$).  Show \begin{equation}\label{eq:K-AiryTr}
 	\K=A\bar \chi_0A^*,
\end{equation} 
\begin{equation}\label{eq:FGUE}
	F_{\rm GUE}(r)=\det\!\big(I-\chi_r\K \chi_r\big)=\det\!\big(I-\K \chi_r\K\big)
\end{equation}
and$^{*}$
\begin{equation}\label{eq:FGOE}
	F_{\rm GOE}(4^{1/3}r)=\det\!\big(I-\K\varrho_r\K\big)
\end{equation}
where $\varrho_r$ is the reflection operator
\begin{equation}\label{eq:varrho}
	\varrho_rf(x)=f(2r-x).
\end{equation}
\end{exercise}


\bibspread

\bibliography{biblio}

\end{document}